\numberwithin{equation}{section}
\newtheorem{thm}{Theorem}[section]
\newtheorem{lemma}{Lemma}[section]
\newtheorem{corollary}{Corollary}[section]
\theoremstyle{remark}
\newtheorem{remark}{Remark}[section]
\newtheorem*{mycomment}{Comment}
\def\citeapos#1{\citeauthor{#1}'s (\citeyear{#1})}
\newcommand{\mymid}{\!\mid\!}
\newcommand{\mbR}{\mathbb{R}}
\newcommand{\rd}{\mathrm{d}}
\newcommand{\dps}{\displaystyle}
\begin{document}

\begin{frontmatter}
 \title{Admissible estimators of a multivariate normal mean vector when the scale is unknown}
\runtitle{Admissible estimators}
\runauthor{Y. Maruyama and W. Strawderman}

\begin{aug}
\author[A]{\fnms{Yuzo} \snm{Maruyama}\ead[label=e1]{maruyama@mi.u-tokyo.ac.jp}}
\and
\author[B]{\fnms{William, E.} \snm{Strawderman}\ead[label=e2]{straw@stat.rutgers.edu}}
\address[A]{University of Tokyo, \printead{e1}}
\address[B]{Rutgers University, \printead{e2}}
\end{aug}

\begin{abstract}
 We study admissibility of a subclass of generalized Bayes estimators of a multivariate normal vector
 when the variance is unknown, under scaled quadratic loss.
Minimaxity is also established for certain of these estimators.
\end{abstract}

\begin{keyword}[class=MSC2010]
\kwd[Primary ]{62C15}  
\kwd[; secondary ]{62C20}
\end{keyword}

\begin{keyword}
\kwd{admissibility}
\kwd{Bayes estimators}
\kwd{minimaxity}
\end{keyword}

\end{frontmatter}

 \section{Introduction}
\label{sec:intro}
We study admissibility of a subclass of generalized Bayes estimators of a multivariate normal vector in the case of an unknown variance, under scaled squared error loss.
Specifically, let $X$ and $S$ be independent with
\begin{equation}\label{our.model}
 X \sim N_p(\theta,\sigma^2 I_p), \ S\sim \sigma^2\chi_n^2,
\end{equation}
and consider estimation of $\theta$ under the scaled quadratic loss
\begin{equation}\label{loss}
 L(d;\{\theta,\sigma^2\})=\frac{\|d-\theta\|^2}{\sigma^2}.
\end{equation}
The class of hierarchical priors we consider, with $\eta=1/\sigma^2$, is $\pi(\theta,\eta)$ given by
\begin{equation}\label{our.prior}
\begin{split}
\theta \mymid \{\eta,\lambda \}& \sim N_p\left(0,\frac{1}{\eta}\frac{1-\lambda}{\lambda}I_p\right),\quad 
\lambda \sim \frac{\lambda^a(1-\lambda)^b}{B(a+1,b+1)} \text{ for }0<\lambda<1 \\
 \eta & \sim 1/\eta \text{ for }0<\eta<\infty,
\end{split}
\end{equation}
where $a>-1$ and $b>-1$.
Hence the prior is improper, due to the impropriety of the invariant prior on $\eta$.
However, the conditional prior on $\theta\mymid \eta$ is a proper scale mixture of normal priors generalizing that in \cite{Strawderman-1973} (see also \cite{DSW-2018}), who considered proper priors on $\eta$.
\cite{Maruyama-Strawderman-2020} considered related priors in a study of admissibility within the class of scale equivariant estimators.
\cite{Maru-Straw-2005} studied minimaxity of such estimators.
This paper may be viewed as a companion paper to \cite{Maruyama-Strawderman-2020} where admissibility is proved among the class of all estimators, not just scale equivariant ones.
We also address the issue of minimaxity.

To our knowledge, aside from proper Bayes estimators, these are the first results proving admissibility of generalized Bayes estimators in this problem for $p>2$.
A difficulty comes from the presence of the nuisance parameter, $\eta$.
As mentioned in \cite{James-Stein-1961} and \cite{Brewster-Zidek-1974}, proving admissibility of generalized Bayes estimators in the presence of a nuisance parameter has been a longstanding unsolved problem. 

Our method of proof is similar in spirit to that of \cite{Brown-Hwang-1982} in the known scale case and makes use of a version of \citeapos{Blyth-1951} method.
As \cite{Berger-1985} pointed out, ``Indeed, in general, very elaborate (and difficult to work with) choices of sequences of proper priors are needed'' for proving admissibility.
The sequence of proper priors, we use in this paper, is given by
 \begin{align*}
\pi(\theta,\eta)  \left\{\frac{i}{i+\log (\max(\eta,1/\eta))}\right\}^2,\ i=1,2,\dots,
 \end{align*}
 which is novel in this area, to our knowledge.
 The main result of this paper (Theorem \ref{thm.main}) is that the generalized Bayes estimator corresponding to the hierarchical prior \eqref{our.prior} is admissible provided $-1<a<n/2$ and $b>-1/2$.
 We also show that, for $p\geq 5$, a subclass of these estimators are admissible and minimax.
 Minimaxity of some of these estimators was shown in \cite{Maru-Straw-2005}. 
 Among them, the most striking estimator, because of its simple and explicit form, is
 \begin{equation*}
 \left(1-\frac{2(p-2)/(n+2)}{\|X\|^2/S+1+2(p-2)/(n+2)}\right)X.
 \end{equation*}
We show this to be admissible and minimax when $n>3$ and $p>4n/(n-2)$.
  
\cite{Brown-1971} largely settled the issue of admissibility in the known scale case and has been a motivating force behind many admissibility studies in multidimensional settings, including \cite{Brown-Hwang-1982} and of course, the present paper.
  \cite{Johnstone-2019} gives an excellent review of the development and impact of Brown's monumental paper.
  
  The main result on admissibility is given in Section \ref{sec:main.adm}.
  Minimaxity is discussed in Section \ref{sec:minimax}.
  An appendix is devoted to the proofs of many of the results used in the development of Section \ref{sec:main.adm}.
  Comments are given in Section \ref{sec:cr}.

 \section{Admissibility of generalized {B}ayes estimators}
\label{sec:main.adm}
The main result of this paper is the following.
\begin{thm}\label{thm.main}
The generalized Bayes estimator corresponding to the prior $ \pi(\theta,\eta)$ given by \eqref{our.prior} is admissible for the model \eqref{our.model} under the loss \eqref{loss} provided
\begin{equation}\label{main.cond}
 -1<a<n/2 \text{ and }  b>-1/2.
\end{equation}
\end{thm}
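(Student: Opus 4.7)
The plan is to apply a version of Blyth's lemma combined with a Brown--Hwang style integration-by-parts, adapted to the presence of the nuisance parameter $\eta$. Let $\delta^\pi$ denote the candidate generalized Bayes estimator and $m(x,s)$ its marginal density, and consider the sequence of (to be shown) proper priors
\[
\pi_i(\theta,\eta) = \pi(\theta,\eta)\, h_i^2(\eta), \qquad h_i(\eta) = \frac{i}{i+\log\max(\eta,1/\eta)},
\]
with corresponding marginal $m_i$ and Bayes rule $\delta^{\pi_i}$. A first check is that each $\pi_i$ is proper: after integrating $\theta$ and $\lambda$ out of the hierarchy, the marginal prior on $\eta$ is $\eta^{-1}$, and the substitution $u=|\log\eta|$ gives $\int_0^\infty h_i^2(\eta)\,\eta^{-1}\,d\eta = 2i<\infty$.

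The heart of the argument is the Blyth identity: for any competing estimator $\delta$,
\[
r(\pi_i,\delta)-r(\pi_i,\delta^{\pi_i}) = \int \|\delta-\delta^{\pi_i}\|^2\,\eta\, m_i(x,s)\, dx\, ds,
\]
so it suffices to show this vanishes as $i\to\infty$ when $\delta=\delta^\pi$. Writing $\delta^\pi = X + \eta^{-1}\nabla_x\log m$ and $\delta^{\pi_i}=X+\eta^{-1}\nabla_x\log m_i$, the difference is purely a ratio of posterior $\eta$-moments, since $h_i$ depends only on $\eta$. A Brown--Hwang type manipulation (expand the square and integrate by parts in $x$, using the identity $\nabla_\theta \phi = -\nabla_x\phi$ for the normal kernel) reduces the Bayes risk difference to an expression bounded by a constant times
\[
\int_0^\infty (h_i'(\eta))^2\, g(\eta)\, d\eta,
\]
where $g(\eta)$ is built from integrals over $(x,s)$ and $\lambda$ of quantities depending on the prior structure alone. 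An explicit computation gives $\int_1^\infty (h_i'(\eta))^2\,\eta\, d\eta = O(1/i)$, and analogously near $0$, so the strategy is then to show that $g(\eta)$ has polynomial tails compatible with this decay.

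The principal obstacle — and the point at which the hypotheses \eqref{main.cond} become sharp — is controlling $g(\eta)$. After integrating $\theta$ out, the $\lambda$-integrand carries factors $\lambda^{a+p/2}(1-\lambda)^b$, and integrating $S$ out introduces $\eta^{n/2}$. Laplace-type endpoint asymptotics then give three conditions: $b>-1/2$ controls the mass near $\lambda=1$ (small prior variance) required for integrability as $\eta\to 0$; $a<n/2$ controls the mass near $\lambda=0$ (large prior variance) needed to offset the $\eta^{n/2}$ factor as $\eta\to\infty$; and $a>-1$ is simply the condition for the Beta prior on $\lambda$ to be a proper probability density. Once the tail bounds on $g$ are in place, dominated convergence yields $\int(h_i')^2 g \to 0$, and Blyth's lemma delivers admissibility. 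The detailed marginal computations, the integration-by-parts identity, and the tail estimates on $g(\eta)$ would most naturally be deferred to an appendix.
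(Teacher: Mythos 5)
Your setup --- the prior sequence $\pi_i=\pi h_i^2$ with $h_i(\eta)=i/(i+\log\max(\eta,1/\eta))$, the properness computation $\int_0^\infty h_i^2(\eta)\eta^{-1}\,\rd\eta=2i$, and the Blyth reduction to showing $\int\|\delta^\pi-\delta^{\pi_i}\|^2\eta\,m_i\to0$ --- is exactly the paper's. The gap is at the central step. You assert that a Brown--Hwang integration by parts in $x$ bounds the Bayes risk difference by a constant times $\int_0^\infty(h_i'(\eta))^2g(\eta)\,\rd\eta$, and you then exploit $\int_1^\infty(h_i'(\eta))^2\eta\,\rd\eta=O(1/i)$. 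That reduction is not available here: the Stein identity converts $\nabla_x$ on the normal kernel into $\nabla_\theta$ on the prior, and since $h_i$ depends only on $\eta$, no derivative of $h_i$ is ever produced. (The paper makes this point explicitly in the remark after its Lemma 2.3: unlike Brown--Hwang, there is no term $m(\{\nabla_\theta h_i^2\}\pi)/m(\pi h_i^2\eta)$.) Nor is there an integration by parts in $\eta$ to fall back on, since $\eta$ enters the likelihood as a scale rather than a location. What actually must be controlled is the discrepancy between two ratios of posterior $\eta$-moments,
\[
\frac{m(\nabla_\theta\pi)}{m(\pi\eta)}-\frac{m(h_i^2\nabla_\theta\pi)}{m(\pi h_i^2\eta)},
\]
and your sketch offers no mechanism for doing so; the step you propose in its place would fail.

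The paper's actual route is a Cauchy--Schwarz argument bounding the squared difference by $C\,A(x,s)\,B_i(x,s)$ with $B_i=1-m(\pi\eta h_i)^2/\{m(\pi\eta)m(\pi h_i^2\eta)\}$, a normalized posterior variance of $h_i(\eta)$. The decay of $B_i$ is obtained not from a Dirichlet-form computation in $\eta$ but from expansions of $E[H_i(V/s)]$ and $E[H_i^2(V/s)]$ (with $V=\eta S$) in powers of $1/(i+|\log s|)$, yielding the $i$-uniform bound $B_i\le\min\{1,\ \mathrm{const}/(1+|\log s|)^2\}$; this is where $a<n/2$ enters (via tail bounds on the conditional density of $V$ near $v=0$, not, as you suggest, by ``offsetting $\eta^{n/2}$ as $\eta\to\infty$''), and it supplies exactly the extra $(1+|\log s|)^{-2}$ factor needed to make $A\cdot B_i$ integrable in $s$, since $\int A\,\rd x\propto s^{-1}$ alone is not integrable. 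One could imagine rescuing your idea by proving a Poincar\'e inequality for the posterior of $\log\eta$, bounding the posterior variance of $h_i$ by a posterior expectation of $(\eta h_i'(\eta))^2$ with a constant uniform in $(x,s)$; but that inequality is itself nontrivial, is not established in your sketch, and even then the reference measure concentrates near $\eta\approx1/s$, so the resulting bound is inherently a function of $s$ (as in the paper) rather than a free integral $\int(h_i')^2g(\eta)\,\rd\eta$ decoupled from the data. Without this analysis the argument does not close.
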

The basic structure of the proof is standard, as in \cite{Brown-Hwang-1982}, and is based on the \cite{Blyth-1951} method.
We give an increasing sequence of proper priors
\begin{equation*}
 \pi_i(\theta,\eta)=  \pi(\theta,\eta)h_i^2(\eta).
\end{equation*}
Let $ \delta_\pi $ be the generalized Bayes estimator corresponding to $ \pi(\theta,\eta)$ and $ \delta_{\pi i}$ the proper Bayes estimator corresponding to $ \pi_i(\theta,\eta)$.
The Bayes risk difference of $ \delta_\pi $ and $ \delta_{\pi i}$ with respect to the prior $ \pi_i(\theta,\eta)$, $\Delta_i$, is defined by
\begin{align}\label{bill.0}
\Delta_i =
\int_{\mbR^p}\int_0^\infty 
 \left\{E\left[\eta\|\delta_{\pi }-\theta\|^2\right]-E\left[\eta\|\delta_{\pi i}-\theta\|^2\right] \right\}
\pi_i(\theta,\eta)
 \rd  \theta \rd \eta.
\end{align}
Then we show that
\begin{equation}\label{bill.1}
 \lim_{i\to\infty}\Delta_i =0.
\end{equation}
The following form of Blyth's sufficient condition shows that \eqref{bill.1} implies admissibility. The proof, as for the lemmas that follow, is given in Appendix.
\begin{lemma}\label{lem.Blyth}
 A sufficient condition for $\delta_\pi$ to be admissible is that there exists an increasing (in $i$) sequence of proper priors $\pi_i(\theta,\eta)$ such that $\pi_i(\theta,\eta)>0$ for all $\theta,\eta$ and that $\Delta_i$ satisfies \eqref{bill.1}.
\end{lemma}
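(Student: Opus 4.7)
The plan is to run the classical Blyth contradiction argument, adapted to the present setup with the nuisance parameter $\eta$. Suppose $\delta_\pi$ is inadmissible; then there exists an estimator $\delta^*$ with $R(\delta^*;\theta,\sigma^2)\le R(\delta_\pi;\theta,\sigma^2)$ for every $(\theta,\sigma^2)$, with strict inequality at some point $(\theta_0,\sigma_0^2)$. We may assume $R(\delta^*;\theta,\sigma^2)$ is finite, since otherwise $\delta^*$ could not dominate $\delta_\pi$, whose risk is finite (because it is generalized Bayes with finite Bayes risk on compacts, which would need a small supporting lemma but is standard in this exponential-family model).

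The first key step is to invoke continuity of risk functions in $(\theta,\sigma^2)$ for estimators with finite risk in the normal-variance model (a standard consequence of dominated convergence together with the smoothness of the joint density of $(X,S)$ in the parameters). Continuity of $R(\delta_\pi;\cdot)-R(\delta^*;\cdot)$ at $(\theta_0,\sigma_0^2)$ then yields an open neighborhood $A\subset\mathbb{R}^p\times(0,\infty)$ of $(\theta_0,\eta_0)$ (with $\eta_0=1/\sigma_0^2$) and an $\varepsilon>0$ such that
\begin{equation*}
R(\delta_\pi;\theta,1/\eta)-R(\delta^*;\theta,1/\eta)\ge\varepsilon \qquad \text{for all }(\theta,\eta)\in A.
\end{equation*}

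Next, I would exploit the Bayes property of $\delta_{\pi i}$ under the proper prior $\pi_i$. Because $\delta_{\pi i}$ minimizes the Bayes risk against $\pi_i$, one has $r(\pi_i,\delta_{\pi i})\le r(\pi_i,\delta^*)$, so that
\begin{equation*}
\Delta_i \;\ge\; r(\pi_i,\delta_\pi)-r(\pi_i,\delta^*) \;=\;\int\!\!\int\bigl\{R(\delta_\pi;\theta,1/\eta)-R(\delta^*;\theta,1/\eta)\bigr\}\pi_i(\theta,\eta)\,\mathrm{d}\theta\,\mathrm{d}\eta.
\end{equation*}
Since the integrand is nonnegative everywhere by the assumed domination, restricting the integral to $A$ gives $\Delta_i\ge\varepsilon\int_A\pi_i(\theta,\eta)\,\mathrm{d}\theta\,\mathrm{d}\eta$. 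Using the hypothesis that $\pi_i$ is increasing in $i$ and strictly positive, we have $\int_A\pi_i\ge\int_A\pi_1>0$, so $\liminf_i\Delta_i\ge\varepsilon\int_A\pi_1>0$, contradicting \eqref{bill.1}. Hence $\delta_\pi$ is admissible.

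The only delicate point is the continuity of the risk functions in $(\theta,\sigma^2)$, which one needs in order to convert the strict inequality at a single point into a uniform gap on an open set. I expect that to be the one substantive ingredient; the rest is the standard Blyth bookkeeping. In the present model the density is jointly smooth in $(\theta,\sigma^2)$, and an envelope built from the finite-risk bound lets one apply dominated convergence to pass the limit inside the expectation, yielding continuity. Everything else, in particular the integrability of $\pi_i$ against the risks (so that $r(\pi_i,\delta^*)$ and $r(\pi_i,\delta_\pi)$ are finite and the Bayes-minimization step is meaningful), follows from $\pi_i$ being proper together with the finiteness of the risks of $\delta_\pi$ and $\delta^*$.
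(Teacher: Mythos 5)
Your argument is a correct instance of Blyth's method, but it follows a genuinely different route from the paper's. You convert the strict risk inequality at the single point $(\theta_0,\eta_0)$ into a uniform gap $\varepsilon>0$ on an open neighborhood $A$ by invoking continuity of the risk functions, and then lower-bound $\Delta_i$ by $\varepsilon\int_A\pi_1>0$. The paper instead avoids any continuity argument: it forms the midpoint estimator $\delta''=(\delta_\pi+\delta')/2$ and uses the identity $\|\delta''-\theta\|^2=\tfrac12\|\delta_\pi-\theta\|^2+\tfrac12\|\delta'-\theta\|^2-\tfrac14\|\delta_\pi-\delta'\|^2$. Strict improvement at $(\theta_0,\eta_0)$ forces $\delta'\neq\delta_\pi$ on a set of positive measure, and since the sampling densities $f_x(x\mid\theta,\eta)f_s(s\mid\eta)$ are mutually absolutely continuous across all $(\theta,\eta)$, the correction term $E_{\theta,\eta}[\eta\|\delta_\pi-\delta'\|^2]$ is strictly positive at \emph{every} parameter point. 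Hence $\delta''$ strictly improves on $\delta_\pi$ everywhere, and integrating the everywhere-positive risk difference against $\pi_1$ (which is everywhere positive) gives a positive constant bounding $\Delta_i$ from below, with no neighborhood or continuity needed.

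The one place where your version is thinner than it should be is precisely the step you flag: continuity (or at least appropriate semicontinuity) of $R(\delta^*;\cdot)$ for an \emph{arbitrary} dominating estimator $\delta^*$. A dominated-convergence argument with "an envelope built from the finite-risk bound" is not immediate, because the finite-risk bound is at a fixed parameter point while the densities shift with $(\theta,\eta)$; the clean way to get it is via analyticity of Laplace transforms in the exponential-family parametrization, using $R(\delta^*)\le R(\delta_\pi)<\infty$ everywhere (and finiteness of $R(\delta_\pi)$ itself also deserves a word). This is standard and repairable, so I would not call it a fatal gap, but it is an extra substantive lemma that the paper's midpoint-plus-equivalent-measures trick renders unnecessary. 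What each approach buys: yours is the familiar textbook Blyth bookkeeping and generalizes to settings where one only knows positivity of the priors on open sets; the paper's is self-contained for quadratic loss and sidesteps risk-continuity entirely, at the cost of being tied to the convexity of the loss.
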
 
The following two lemmas give the form of the Bayes estimator and an expression for $\Delta_i$.
\begin{lemma}\label{lem.form.Bayes}
 The Bayes estimator of $\theta$ for the problem in \eqref{our.model}, \eqref{loss} and \eqref{our.prior} is given by
 \begin{align*}
 \delta_\pi (x ,s)  =x  + \frac{m(\nabla_{\theta} \pi)}{m(\pi\eta)},
 \end{align*}
where $\nabla_{\theta}=(\partial/\partial \theta_1,\partial/\partial \theta_2,\dots ,\partial/\partial \theta_p )'$ and
\begin{align*}
 m(\psi)  &= \int_{\mbR^p}\int_0^\infty 
\psi(\theta,\eta)f_{x }(x \mymid\theta,\eta)f_s(s\mymid\eta)
   \rd \theta \rd \eta, \\
 f_{x}(x\mymid\theta,\eta)&=\frac{\eta^{p/2}}{(2\pi)^{p/2}}\exp\left(-\frac{\eta}{2}\|x -\theta\|^2\right), \\
f_s(s\mymid\eta)&=\frac{\eta^{n/2}}{\Gamma(n/2)2^{n/2}}s^{n/2-1}\exp\left(-\frac{\eta s}{2}\right).
\end{align*}
\end{lemma}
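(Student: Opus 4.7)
The plan is to invoke the standard characterization of the Bayes rule under the scaled quadratic loss and then convert the resulting posterior expectation into the displayed ratio by means of a Stein-type integration by parts in~$\theta$.

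First, since the posterior loss $E[\eta\|d-\theta\|^2\mymid x,s]$ is strictly convex in $d$, the Bayes estimator is the $\eta$-weighted posterior mean of $\theta$, namely
\begin{equation*}
\delta_\pi(x,s)=\frac{E[\eta\theta\mymid x,s]}{E[\eta\mymid x,s]}=\frac{m(\pi\eta\theta)}{m(\pi\eta)}.
\end{equation*}
To recast the numerator, I would use the elementary identity $\nabla_\theta f_x(x\mymid\theta,\eta)=\eta(x-\theta)f_x(x\mymid\theta,\eta)$, which, after rearrangement, gives $\eta\theta f_x=\eta x f_x-\nabla_\theta f_x$. Multiplying by $\pi(\theta,\eta)f_s(s\mymid\eta)$ and integrating in $(\theta,\eta)$ yields
\begin{equation*}
m(\pi\eta\theta)=x\,m(\pi\eta)-\int_{\mbR^p}\!\int_0^\infty\pi(\theta,\eta)\,\nabla_\theta f_x(x\mymid\theta,\eta)\,f_s(s\mymid\eta)\,\rd\theta\,\rd\eta.
\end{equation*}

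Next, I would perform componentwise integration by parts in $\theta$ on the second term. Because the conditional prior $\theta\mymid\eta$ in \eqref{our.prior} is a proper scale mixture of centered normals, both $\pi(\theta,\eta)$ and $\nabla_\theta\pi(\theta,\eta)$ have Gaussian-type decay in $\theta$ for every fixed $\eta$, so the boundary contributions at $\|\theta\|\to\infty$ vanish. This converts the integral into $-m(\nabla_\theta\pi)$, giving $m(\pi\eta\theta)=x\,m(\pi\eta)+m(\nabla_\theta\pi)$; dividing by $m(\pi\eta)$ produces the stated expression.

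The main technical obstacle is the rigorous justification of the integration by parts together with the Fubini interchanges and the differentiation-under-the-integral needed to equate the posterior mean with its analytic form. This requires that $\pi(\theta,\eta)f_x(x\mymid\theta,\eta)f_s(s\mymid\eta)$ and $\|\nabla_\theta\pi(\theta,\eta)\|f_x(x\mymid\theta,\eta)f_s(s\mymid\eta)$ are both jointly integrable in $(\theta,\eta)$; the former is controlled by propriety of $\theta\mymid\eta$ and the fact that $f_s(s\mymid\eta)\eta^{-1}$ is integrable in $\eta$ for fixed $s>0$, while the latter follows from the same considerations once one notes that $\|\nabla_\theta\pi\|$ has the same scale-mixture structure with an extra factor of $\eta\|\theta\|$ which is absorbed by the Gaussian decay. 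Under the parameter restrictions $a,b>-1$ the mixing density in $\lambda$ is integrable on $(0,1)$, so these conditions are satisfied.
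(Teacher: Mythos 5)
Your proposal is correct and follows essentially the same route as the paper: the paper likewise writes the Bayes rule as $m(\pi\eta\theta)/m(\pi\eta)$, splits $\theta=x+(\theta-x)$, and converts $\int(\theta-x)\eta f_x\pi\,\rd\theta$ into $\int f_x\nabla_\theta\pi\,\rd\theta$ via the Stein identity (i.e., the same integration by parts you carry out). Your added remarks on the vanishing boundary terms and the integrability needed for Fubini are consistent with, and slightly more explicit than, the paper's one-line appeal to Stein (1974).
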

\begin{lemma}\label{lem.Bayes.risk.diff}
 The Bayes risk difference $\Delta_i$ is written as
\begin{align}\label{Delta_i.0}
\Delta_i=\int_{\mbR^{p}}\int_0^\infty \left\| \frac{m(\nabla \pi )}{m(\pi \eta)} - 
\frac{m(h_i^2\nabla \pi )}{m(\pi h_i^2\eta)}\right\|^2 m(\pi h_i^2\eta)\rd x  \rd s .
\end{align} 
\end{lemma}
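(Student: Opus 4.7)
The plan is the standard Bayes-risk decomposition for squared-error-type losses. First, by Fubini's theorem I rearrange the double integral in \eqref{bill.0} so that, for each fixed $(x,s)$, the inner integral runs over $(\theta,\eta)$ against the unnormalized joint $\pi_i(\theta,\eta)f_{x}(x\mymid\theta,\eta)f_s(s\mymid\eta)$. Second, because $\delta_{\pi i}$ is characterized as the minimizer of the posterior expected $\eta$-weighted squared loss, namely $\delta_{\pi i}(x,s)=E^{\pi_i}[\eta\theta\mymid x,s]/E^{\pi_i}[\eta\mymid x,s]$, a direct completing-the-square calculation gives, for every estimator $\delta(x,s)$,
\begin{equation*}
E^{\pi_i}\!\left[\eta\|\delta-\theta\|^2\mymid x,s\right]-E^{\pi_i}\!\left[\eta\|\delta_{\pi i}-\theta\|^2\mymid x,s\right]=E^{\pi_i}[\eta\mymid x,s]\,\|\delta-\delta_{\pi i}\|^2,
\end{equation*}
the cross-term $2(\delta-\delta_{\pi i})'E^{\pi_i}[\eta(\delta_{\pi i}-\theta)\mymid x,s]$ vanishing by the definition of $\delta_{\pi i}$.

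Applying this identity with $\delta=\delta_\pi$ and multiplying through by the marginal $m(\pi_i)(x,s)$ produces the integrand of $\Delta_i$ (after the Fubini swap) on the left, while on the right the product $E^{\pi_i}[\eta\mymid x,s]\cdot m(\pi_i)(x,s)$ is exactly $m(\pi h_i^2\eta)(x,s)$ in the notation of Lemma \ref{lem.form.Bayes}. Lemma \ref{lem.form.Bayes}, applied once to $\pi$ and once to $\pi_i=\pi h_i^2$ (noting that $h_i$ depends only on $\eta$, so $\nabla_\theta\pi_i=h_i^2\nabla_\theta\pi$), then yields
\begin{equation*}
\delta_\pi-\delta_{\pi i}=\frac{m(\nabla\pi)}{m(\pi\eta)}-\frac{m(h_i^2\nabla\pi)}{m(\pi h_i^2\eta)},
\end{equation*}
and \eqref{Delta_i.0} follows upon integrating over $(x,s)\in\mbR^p\times(0,\infty)$.

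The only technical subtlety is the Fubini interchange in the first step. Since $\pi_i$ is a proper prior and both $\delta_\pi$ and $\delta_{\pi i}$ have finite $\pi_i$-Bayes risk (the latter as the Bayes estimator for $\pi_i$, the former by a separate integrability check on $m(\nabla\pi)/m(\pi\eta)$), the integrands on either side of \eqref{bill.0} are absolutely integrable and the interchange is routine. Beyond this verification, no machinery deeper than the completing-the-square identity is required; the argument is essentially algebraic.
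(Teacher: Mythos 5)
Your proposal is correct and follows essentially the same route as the paper: the paper likewise swaps the order of integration, expands $\|\delta_\pi-\theta\|^2-\|\delta_{\pi i}-\theta\|^2$ so that the cross term cancels via $m(\theta\eta\pi_i)=\delta_{\pi i}\,m(\pi_i\eta)$ (your completing-the-square identity in unnormalized form), and then invokes Lemma \ref{lem.form.Bayes} for both $\pi$ and $\pi_i$ together with $\nabla_\theta(\pi h_i^2)=h_i^2\nabla_\theta\pi$. Your explicit remark on the Fubini/integrability step is a point the paper leaves implicit, but the substance of the argument is identical.
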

\begin{remark}
 Recall $h_i$ does not depend on $\theta$. 
Hence, unlike \cite{Brown-Hwang-1982}, there is no term
\begin{align*}
 \frac{m(\{\nabla_\theta h_i^2\} \pi )}{m(\pi h_i^2\eta)}
\end{align*}
in the expression of \eqref{Delta_i.0}. 
\end{remark}
The specific sequence of priors used to prove admissibility is given by
\begin{equation}\label{our.prior.i}
 \pi_i(\theta,\eta)=  \pi(\theta,\eta)h_i^2(\eta) 
\end{equation}
where $\pi(\theta,\eta)$ is given in \eqref{our.prior}, and
\begin{equation}\label{eq:original_h_i}
 h_i(\eta)=\frac{i}{i+\log (\max(\eta,1/\eta))}=
\begin{cases}
\dps \frac{i}{i+\log (1/\eta)} & 0<\eta<1, \\
\dps \frac{i}{i+\log \eta} & \eta\geq 1.
\end{cases}
\end{equation}
 Note that $h_i(\eta)$ is increasing in $i$ and 
\begin{align*}
 \lim_{i\to\infty}h_i(\eta)=1 \text{ for all }\eta>0.
\end{align*}
\begin{lemma}\label{lem.hi}
\begin{enumerate}
 \item $ \dps\int_0^\infty \eta^{-1}h_i^2(\eta)\rd \eta=2i$.
\item  The prior $\pi_i(\theta,\eta)$ is proper for all $i$.
\end{enumerate}
\end{lemma}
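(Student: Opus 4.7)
The plan is to handle the two parts in sequence, using part 1 directly in the proof of part 2.

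For part 1, I would split the integral at $\eta=1$ according to the two cases in \eqref{eq:original_h_i}:
\begin{equation*}
\int_0^\infty \eta^{-1}h_i^2(\eta)\rd\eta
= \int_0^1 \frac{i^2}{\eta\,(i+\log(1/\eta))^2}\rd\eta
+ \int_1^\infty \frac{i^2}{\eta\,(i+\log\eta)^2}\rd\eta.
\end{equation*}
In the first integral I substitute $u=\log(1/\eta)$, so $\rd u = -\rd\eta/\eta$ and the new limits are $u=\infty$ to $u=0$; in the second I substitute $u=\log\eta$, so $\rd u = \rd\eta/\eta$ and the new limits are $u=0$ to $u=\infty$. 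Both integrals become $\int_0^\infty i^2/(i+u)^2\rd u = i$, giving the total $2i$.

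For part 2, I would exploit the hierarchical construction of $\pi(\theta,\eta)$ in \eqref{our.prior}. For each fixed $\eta>0$, the conditional density of $\theta$ given $\eta$ is the scale mixture
\begin{equation*}
\pi(\theta\mymid\eta)
= \int_0^1 \frac{(\eta\lambda/(1-\lambda))^{p/2}}{(2\pi)^{p/2}}
\exp\!\left(-\frac{\eta\lambda}{2(1-\lambda)}\|\theta\|^2\right)
\frac{\lambda^a(1-\lambda)^b}{B(a+1,b+1)}\rd\lambda,
\end{equation*}
and this is a proper density in $\theta$ (each normal component integrates to $1$, and the beta density for $\lambda$ integrates to $1$ since $a,b>-1$). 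Combined with the marginal $1/\eta$ on $\eta$ in \eqref{our.prior}, this gives $\int_{\mbR^p}\pi(\theta,\eta)\rd\theta = 1/\eta$. Hence
\begin{equation*}
\int_0^\infty\!\int_{\mbR^p}\pi_i(\theta,\eta)\rd\theta\rd\eta
= \int_0^\infty \eta^{-1}h_i^2(\eta)\rd\eta = 2i < \infty,
\end{equation*}
by part 1, so $\pi_i$ is proper.

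There is no real obstacle here: part 1 is a one-variable change of variables and part 2 reduces immediately to part 1 once the Fubini/hierarchical unpacking is done. The only minor bookkeeping point is confirming that the normalizing constant $B(a+1,b+1)$ is finite, which is exactly the condition $a,b>-1$ assumed in \eqref{our.prior}.
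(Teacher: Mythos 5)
Your proposal is correct and follows essentially the same route as the paper: part 1 is computed by splitting the integral at $\eta=1$ and evaluating each piece (the paper uses the antiderivative directly, you use the equivalent substitution $u=\log(1/\eta)$ or $u=\log\eta$, both giving $i+i=2i$), and part 2 reduces to part 1 by noting that the conditional prior on $\theta$ given $\eta$ integrates to $1$, so $\int_{\mbR^p}\pi(\theta,\eta)\,\rd\theta=1/\eta$ and the total mass is $2i<\infty$. No gaps.
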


The bulk of the remainder of the construction of the proof consists in showing that (a) the integrand of $\Delta_i$ in \eqref{Delta_i.0} is bounded by an integrable function and (b) the integrand itself tends to $0$ as $i$ tends to infinity.
Hence, by the dominated convergence theorem, \eqref{bill.1} is satisfied so that $\delta_\pi$ is admissible.

\begin{lemma}\label{lem.integrand.converge}
 The integrand of $\Delta_i$ in \eqref{Delta_i.0} converges to $0$ as $i$ tends to infinity.
\end{lemma}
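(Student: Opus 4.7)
The plan is to establish pointwise convergence at each fixed $(x,s)$ by applying the dominated convergence theorem (DCT) to the inner integrals $m(h_i^2 \nabla \pi)$ and $m(\pi h_i^2 \eta)$ separately, and then exploiting continuity of the ratio. From \eqref{eq:original_h_i}, $h_i(\eta)\in(0,1]$ for all $\eta>0$ with $h_i(\eta)\to 1$ pointwise as $i\to\infty$, so the integrands of $m(h_i^2 \partial_{\theta_j}\pi)$ and $m(\pi h_i^2 \eta)$ are dominated in absolute value by the integrands of $m(|\partial_{\theta_j}\pi|)$ and $m(\pi\eta)$.

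Granted that $m(|\partial_{\theta_j}\pi|)(x,s)$ and $m(\pi\eta)(x,s)$ are finite---both of which are implicit in the well-definedness of $\delta_\pi(x,s)$ in Lemma \ref{lem.form.Bayes}---DCT yields $m(h_i^2 \nabla \pi)\to m(\nabla \pi)$ coordinatewise and $m(\pi h_i^2 \eta)\to m(\pi \eta)$. Since $m(\pi\eta)(x,s)>0$ (its integrand is strictly positive on a set of positive measure), the continuous mapping theorem gives
\[
\left\| \frac{m(\nabla \pi )}{m(\pi \eta)} - \frac{m(h_i^2\nabla \pi )}{m(\pi h_i^2\eta)}\right\|^2 \longrightarrow 0,
\]
and multiplying by $m(\pi h_i^2 \eta)$, which tends to the finite limit $m(\pi\eta)$, yields pointwise convergence of the whole integrand to $0$.

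The main obstacle is verifying the finiteness required to invoke DCT. Integrability in $\lambda$ near the endpoints $0$ and $1$ reduces to $a,b>-1$, which is part of the prior specification \eqref{our.prior}. Integrability in $\eta$ near $0$ and $\infty$, once the Gaussian integration over $\theta$ has been performed (turning the conditional prior on $\theta\mid\eta,\lambda$ into a convolution with $f_x$), is more delicate: it is controlled by the hypotheses $a<n/2$ and $b>-1/2$ in \eqref{main.cond}, which govern the tail and origin behavior of the resulting one-dimensional integrals. These endpoint estimates are routine but tedious and presumably appear among the auxiliary lemmas relegated to the appendix; once they are in hand, the DCT argument above delivers the claim.
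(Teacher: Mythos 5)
Your argument is correct and is essentially the paper's own proof: both establish pointwise convergence of the integrand by applying dominated convergence to the inner integrals, using $0<h_i\leq 1$ and $h_i\to 1$, together with positivity of $m(\pi\eta)$ and continuity of the ratio and squared norm. The only cosmetic difference is that the paper works with the posterior-mean form $m(\theta\eta\pi)/m(\pi\eta)$ (splitting $\theta_j$ into positive and negative parts to justify the domination), whereas you work with $m(\nabla\pi)/m(\pi\eta)$; also note that for fixed $(x,s)$ the finiteness you need follows already from $a,b>-1$, so the conditions $a<n/2$ and $b>-1/2$ are not actually required at this pointwise stage (they enter only in Lemmas \ref{lem.bound.A} and \ref{lem.bound.B} for the dominating function of the outer integral).
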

Most of the remaining lemmas are devoted to bounding the integrand of \eqref{Delta_i.0}.
\begin{lemma}\label{lem.bound.delta}
 There exists a positive constant $C$ such that
\begin{align*}
\left\| \frac{m(\nabla \pi )}{m(\pi \eta)} - 
 \frac{m(h_i^2\nabla \pi )}{m(\pi h_i^2\eta)}\right\|^2 m(\pi h_i^2\eta)  
\leq C A(x,s) B_i (x,s)
\end{align*}
 where
\begin{equation}\label{ABI}
  A(x,s)=m\left(\frac{\pi(\theta,\eta) }{\eta\|\theta\|^2}k(\eta\|\theta\|^2)\right), \ 
  B_i (x,s)=1- \frac{m(\pi \eta h_i)^2}{m(\pi \eta)m(\pi h_i^2\eta)}
\end{equation}
and $k(r)=r^{1/2}I_{[0,1]}(r)+I_{(1,\infty)}(r)$.
\end{lemma}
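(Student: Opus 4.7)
The plan is to combine the two ratios over a common denominator, symmetrize the resulting numerator to produce a product of a ``$v$-difference'' and an ``$h_i^2$-difference'', apply Cauchy-Schwarz to peel off the $h_i^2$-factor (which assembles into $B_i$), and then dominate the remaining factor by a constant multiple of $A(x,s)$. Writing $\alpha = m(\pi\eta)$, $\gamma = m(\pi h_i^2\eta)$, $\beta = m(\pi\eta h_i)$, setting $v = \nabla_\theta\pi/(\pi\eta)$ so that $\nabla_\theta\pi = \pi\eta v$, and $d\mu = \pi\eta f_x f_s\, \rd\theta\, \rd\eta$, a common-denominator identity together with symmetrization in two iid copies of $(\theta,\eta)\sim d\mu/\alpha$ gives
\begin{align*}
 \frac{m(\nabla\pi)}{m(\pi\eta)} - \frac{m(h_i^2\nabla\pi)}{m(\pi h_i^2\eta)} = \frac{N}{\alpha\gamma},\qquad N = \tfrac{1}{2}\iint[v_1-v_2]\bigl[h_i^2(\eta_2)-h_i^2(\eta_1)\bigr]\, d\mu_1\, d\mu_2.
\end{align*}
Factoring $h_i^2(\eta_2) - h_i^2(\eta_1) = [h_i(\eta_1)+h_i(\eta_2)][h_i(\eta_2)-h_i(\eta_1)]$, applying Cauchy-Schwarz componentwise, and using $0 < h_i \leq 1$ together with $\|v_1 - v_2\|^2 \leq 2(\|v_1\|^2 + \|v_2\|^2)$, produces
\begin{align*}
 \|N\|^2 \leq C_1\,\alpha\,m\!\left(\|\nabla\pi\|^2/(\pi\eta)\right)\cdot(\alpha\gamma - \beta^2).
\end{align*}
Since $\alpha\gamma - \beta^2 = \alpha\gamma\, B_i(x,s)$ and the LHS of the lemma equals $\|N\|^2/(\alpha^2\gamma)$, this yields the intermediate bound $\mathrm{LHS}\leq C_1\,m(\|\nabla\pi\|^2/(\pi\eta))\,B_i(x,s)$.

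Next I would bound $m(\|\nabla\pi\|^2/(\pi\eta))$ by $C_2\, A(x,s)$. The scale structure built into \eqref{our.prior} forces the marginal prior to take the form $\pi(\theta,\eta) = \eta^{p/2-1}\tilde h(\eta\|\theta\|^2)$ for the one-variable mixture
\begin{align*}
 \tilde h(r) \propto \int_0^1 \lambda^{p/2+a}(1-\lambda)^{b-p/2}\exp\!\left(-\tfrac{r\lambda}{2(1-\lambda)}\right)\rd\lambda.
\end{align*}
Writing $r = \eta\|\theta\|^2$ one computes $\|\nabla\pi\|^2/(\pi\eta) = 4\eta^{p/2-1}r(\tilde h'(r))^2/\tilde h(r)$ while $\pi\, k(r)/(\eta\|\theta\|^2) = \eta^{p/2-1}\tilde h(r)\,k(r)/r$, so the desired comparison reduces to the single-variable inequality
\begin{align*}
 4\,r^2\bigl(\tilde h'(r)/\tilde h(r)\bigr)^2 \leq C_2\,k(r), \qquad r>0.
\end{align*}

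The main obstacle is establishing this last inequality, which splits into two regimes. For $r\geq 1$ one needs $r|\tilde h'/\tilde h| = O(1)$, and a Laplace asymptotic of the mixture (valid under $a>-1$) gives $\tilde h(r)\asymp r^{-(p/2+a+1)}$ at infinity, from which $r\tilde h'(r)/\tilde h(r)$ tends to a finite negative constant. For $r\leq 1$ the required bound is $r^{3/4}|\tilde h'/\tilde h| = O(1)$; this follows from smoothness of $\tilde h$ at the origin when $b > p/2-1$, but when $b$ is smaller the mixture is singular at $0$ and one must either work via the substitution $\mu = \lambda/(1-\lambda)$ to extract the precise small-$r$ rate, or, if the pointwise inequality ultimately fails in this regime, partition the $(\theta,\eta)$-plane and recover the integrated comparison via the extra decay carried by $f_x$ and $f_s$; this is the regime in which the assumption $b>-1/2$ plays its role. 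These quantitative estimates on $\tilde h$ and its logarithmic derivative are the technical heart of the argument and would be isolated as separate lemmas in the Appendix.
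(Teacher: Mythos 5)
The first half of your argument is sound and is a genuine alternative to the paper's route: the covariance symmetrization $N=\tfrac12\iint(v_1-v_2)(h_i^2(\eta_2)-h_i^2(\eta_1))\,d\mu_1d\mu_2$ followed by Cauchy--Schwarz does produce the factor $(\alpha\gamma-\beta^2)/(\alpha\gamma)=B_i$ cleanly (the paper instead reaches $B_i$ by writing the difference of reciprocals as $\frac{2}{\sqrt{\alpha\gamma}}\,|1-\sqrt{\alpha/\gamma}\,h_i|$ and using $1-\sqrt{u}\le 1-u$). The problem is the other factor your Cauchy--Schwarz leaves behind, $m(\|\nabla\pi\|^2/(\pi\eta))$, and the comparison $m(\|\nabla\pi\|^2/(\pi\eta))\le C_2A(x,s)$ that your plan then requires. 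Writing $\pi(\theta,\eta)=\eta^{p/2-1}\tilde h(r)$ with $r=\eta\|\theta\|^2$, one has $\|\nabla\pi\|^2/(\pi\eta)=4\eta^{p/2-1}r\,\tilde h'(r)^2/\tilde h(r)$, and the Tauberian asymptotics in the paper's Lemma B.1 give $\tilde h(r)\asymp r^{-(p/2-1-b)}$ and $|\tilde h'(r)|\asymp r^{-(p/2-b)}$ as $r\to0$ when $b<p/2-1$. Hence $\|\nabla\pi\|^2/(\pi\eta)\asymp \eta^{p/2-1}r^{-(p/2-b)}$ near $\theta=0$, and its $\theta$-integral behaves like $\int_0t^{2b-1}\,\rd t$, which \emph{diverges} for $b\le 0$. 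So for $-1/2<b\le 0$ (squarely inside the hypothesis $b>-1/2$) your intermediate bound reads $\mathrm{LHS}\le C_1\cdot\infty\cdot B_i$ and is vacuous; correspondingly, the single-variable inequality $4r^2(\tilde h'/\tilde h)^2\le C_2k(r)$ you reduce to is false near $r=0$ for every $b<p/2-1$, since the left side tends to $4(p/2-1-b)^2>0$ while $k(r)=r^{1/2}\to0$. You flag this possibility but the proposed fallback (extra decay from $f_x,f_s$) cannot repair it: the divergence is at $\theta=0$, where $f_x$ is bounded away from $0$, so it occurs for every $(x,s)$.

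The fix in the paper is structural, not a refinement of your estimate: Cauchy--Schwarz must be applied so that the singularity $1/\|\theta\|$ coming from $\|\nabla\pi\|\le 2Q_1\pi/\|\theta\|$ is split \emph{evenly} between the two factors rather than squared into one of them. Concretely, the paper bounds $\|m(\nabla\pi\,(\cdot))\|$ by $m\bigl(\tfrac{\pi}{\|\theta\|}|\cdot|\bigr)$ first (no squaring inside $m$), and only then factors $\tfrac{\pi}{\|\theta\|}=\bigl(\tfrac{\pi}{\eta\|\theta\|^2}k(r)\bigr)^{1/2}\bigl(\tfrac{\pi\eta}{k(r)}\bigr)^{1/2}$. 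The first factor integrates to $A(x,s)$, finite for $b>-1/2$ because of the mollifier $k$; the second carries the remaining $r^{-1/2}$ weight but multiplies a function of $\eta$ alone, so the correlation inequality of Lemma \ref{lem.ASMS} removes the weight at the cost of a constant $Q_2$, after which the $B_i$ factor emerges. Without some analogue of this weighted splitting plus the correlation inequality, the lemma cannot be reached for $b\le 0$, so as written your proposal has a genuine gap precisely in the regime the hypothesis $b>-1/2$ is designed to cover.
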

The next lemma gives a bound on $A(x,s)$.
\begin{lemma}\label{lem.bound.A}
If $b>-1/2$,
 \begin{align*}
 A(x,s)
\leq Ds^{-p/2-1}\left(1+\|x\|^2/s\right)^{-p/2-1/2}
 \end{align*}
for some positive constant $D$.
\end{lemma}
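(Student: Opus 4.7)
The plan is to exploit the pointwise bound $k(r)/r\leq r^{-1/2}$, which holds on all of $(0,\infty)$ since $k(r)/r=r^{-1/2}$ on $[0,1]$ and $k(r)/r=r^{-1}\leq r^{-1/2}$ on $(1,\infty)$, and then to evaluate the resulting triple integral in the order $\theta$, $\eta$, $\lambda$. Applying the bound immediately gives
\[
A(x,s)\leq m\!\left(\frac{\pi(\theta,\eta)}{\sqrt{\eta}\,\|\theta\|}\right).
\]
I would then expand $\pi(\theta,\eta)$ in its scale-mixture form and combine the conditional Gaussian for $\theta\mymid\eta,\lambda$ with $f_x(x\mymid\theta,\eta)$ via the identity $\pi(\theta\mymid\eta,\lambda)f_x(x\mymid\theta,\eta)=N_p(x;0,I_p/(\eta\lambda))\,N_p(\theta;(1-\lambda)x,(1-\lambda)I_p/\eta)$. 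The substitution $\psi=\theta\sqrt{\eta/(1-\lambda)}$ reduces the $\theta$ integral to a constant multiple of $E[\|\psi\|^{-1}]$ for $\psi\sim N_p(\mu,I_p)$ with $\mu=x\sqrt{\eta(1-\lambda)}$.

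Using the standard estimate $E[\|\psi\|^{-1}]\leq C(1+\|\mu\|^2)^{-1/2}$ (valid for $p\geq 2$) and collecting constants yields
\[
A(x,s)\lesssim s^{n/2-1}\int_0^\infty\!\!\int_0^1\lambda^{a+p/2}(1-\lambda)^{b-1/2}\eta^{(p+n)/2-1}\bigl(1+\eta(1-\lambda)\|x\|^2\bigr)^{-1/2}e^{-\eta(\lambda\|x\|^2+s)/2}\rd\lambda\,\rd\eta.
\]
The hypothesis $b>-1/2$ enters exactly here, to ensure integrability of $(1-\lambda)^{b-1/2}$ as $\lambda\to 1$. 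Next I would carry out the $\eta$ integration using the elementary bound $\int_0^\infty\tau^{q-1}e^{-\tau}(1+c\tau)^{-1/2}\rd\tau\leq K(1+c)^{-1/2}$ (valid for $q\geq 1/2$). The key algebraic observation is that $(\lambda\|x\|^2+s)+2(1-\lambda)\|x\|^2=(2-\lambda)\|x\|^2+s$ is comparable to $\|x\|^2+s$ uniformly in $\lambda\in[0,1]$, so a clean factor $(\|x\|^2+s)^{-1/2}$ pulls out of the $\eta$ integral.

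After these reductions, matters come down to showing the remaining $\lambda$ integral satisfies
\[
\int_0^1\lambda^{a+p/2}(1-\lambda)^{b-1/2}(\lambda\|x\|^2+s)^{(1-p-n)/2}\rd\lambda\lesssim s^{(1-n)/2}(s+\|x\|^2)^{-p/2}.
\]
I would handle this by splitting at $\lambda_0=s/(s+\|x\|^2)$: on $[0,\lambda_0]$ the factor $\lambda\|x\|^2+s$ is comparable to $s$, reducing things to a truncated Beta integral in which $a>-1$ gives convergence near $0$; on $[\lambda_0,1]$ one replaces $\lambda\|x\|^2+s$ by $\lambda(\|x\|^2+s)$, and the resulting power of $\lambda$, combined with the $(1-\lambda)^{b-1/2}$ singularity controlled by $b>-1/2$, produces the needed $(\|x\|^2+s)^{-p/2}$ decay. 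The main obstacle will be the bookkeeping in this final case analysis: making the powers of $s$ and $\|x\|^2$ balance to exactly the target exponents across the sub-case boundary $a=(n-3)/2$, at which the qualitative behavior of the second piece transitions between being bounded and being a positive power of $w=\|x\|^2/s$.
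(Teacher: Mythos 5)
Your proposal is correct, and it reaches the bound by a genuinely different route than the paper. The paper does not integrate the Gaussian directly: it first shows (via Tauberian asymptotics of $\pi(r\mymid a,b)$ and its derivative at $r\to 0$ and $r\to\infty$, Lemma B.1, Part 2) that the integrand of $A$ satisfies the pointwise bound $\eta^{p/2-1}\pi(\eta\|\theta\|^2\mymid a,b)\,k(\eta\|\theta\|^2)/(\eta\|\theta\|^2)\leq Q_3\,\eta^{p/2-1}\pi(\eta\|\theta\|^2\mymid a+1,\tilde b)$ with $\tilde b=\min(b,0)-1/2$, i.e.\ it absorbs the factor $k(r)/r$ into a shifted member of the \emph{same} prior family (this is where $b>-1/2$ enters, to keep $\tilde b>-1$); it then computes the marginal $m(\pi_*)$ of that shifted prior in closed form, using the substitution $t=(1+w)\lambda/(1+\lambda w)$ to turn the $\lambda$-integral into a Beta-type integral bounded uniformly in $z=w/(1+w)$. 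You instead use the cruder uniform bound $k(r)/r\leq r^{-1/2}$, evaluate the $\theta$-integral through $E[\|\psi\|^{-1}]\lesssim(1+\|\mu\|^2)^{-1/2}$, and then do the $\eta$- and $\lambda$-integrations by hand with a split at $\lambda_0=s/(s+\|x\|^2)$; I checked the exponent bookkeeping (the piece on $[0,\lambda_0]$ needs $a>-1$, the piece on $[\lambda_0,1]$ needs $b>-1/2$ and $n\geq 1$, with the boundary case $a=(n-3)/2$ harmless since $a>-1$ excludes it when $n=1$) and it closes to exactly $s^{-1/2}(s+\|x\|^2)^{-p/2-1/2}$, which is the stated bound. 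What the paper's route buys is reuse of machinery — the marginal of $\pi(\cdot\mymid a+1,\tilde b)$ has the same structure computed elsewhere in the appendix, and keeping $r^{-1}$ for $r>1$ (rather than weakening to $r^{-1/2}$) is what makes the absorption into the family with parameter $a+1$ possible. What your route buys is elementarity: no Tauberian lemma, only Gaussian moments and one-dimensional integrals. The only hypothesis you use beyond the paper's is $p\geq 2$ for $E[\|\psi\|^{-1}]<\infty$, but that is needed for $A(x,s)$ to be finite at all and is implicit throughout the paper, so it is not a gap.
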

The following gives a bound on $B_i (x,s)$ and accounts for the condition on $a$ in Theorem \ref{thm.main} as will be seen in the proof of Theorem \ref{thm.main}.
\begin{lemma}\label{lem.bound.B}
Assume $a<n/2$. For all positive integers $i$, 
 \begin{align*}
  B_i (x,s)\leq 
\begin{cases}
 E/\{1+\log (1/s) \}^2 & s<\gamma_1 \\
 1 & \gamma_1\leq s \leq \gamma_2 \\
 F/(1+\log s )^2 & s>\gamma_2,
\end{cases}
 \end{align*}
for some positive constants $E,F,\gamma_1<1,\gamma_2>1$, all of which are independent of $i$. 
\end{lemma}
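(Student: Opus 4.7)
The plan is to recognize $B_i(x,s)$ as a normalized variance on a one-dimensional measure. Setting
\[
\mu(\rd\eta)=f_s(s\mymid\eta)\left(\int_{\mbR^p}\pi(\theta,\eta)f_x(x\mymid\theta,\eta)\rd\theta\right)\rd\eta
\]
and $\nu(\rd\eta)=\eta\,\mu(\rd\eta)/m(\pi\eta)$, one checks that $\nu$ is a probability measure on $(0,\infty)$ and
\[
B_i(x,s)=1-\frac{[E_\nu(h_i)]^2}{E_\nu(h_i^2)}=\frac{\mathrm{Var}_\nu(h_i)}{E_\nu(h_i^2)}.
\]
The Cauchy--Schwarz inequality then gives $B_i\le 1$ for every $(x,s)$, which already handles the middle case $\gamma_1\le s\le\gamma_2$ for any choice of constants $\gamma_1<1<\gamma_2$.

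For the extreme cases I would first compute $\nu$ explicitly by integrating $\theta$ out of the Gaussian scale-mixture prior. The joint density of $(\eta,\lambda)$ under $\nu$ is proportional to
\[
\lambda^{a+p/2}(1-\lambda)^{b}\,\eta^{(p+n)/2}\exp\left(-\frac{\eta(s+\lambda\|x\|^2)}{2}\right),
\]
so conditionally on $\lambda$, $\eta\sim\mathrm{Gamma}((p+n)/2+1,(s+\lambda\|x\|^2)/2)$, and the $\lambda$-marginal, after the change of variable $u=\lambda\|x\|^2/s$, has density proportional to $u^{a+p/2}(1+u)^{-(p+n)/2-1}(1-us/\|x\|^2)^{b}$ on $(0,\|x\|^2/s)$. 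The assumption $a<n/2$ is exactly what makes this density uniformly normalizable in $\|x\|$ (tail exponent $a-n/2-1<-1$), so all $u$-moments needed below are bounded uniformly in $x$ and $s$. This is the only place where the hypothesis on $a$ enters.

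From this, Gamma-moment bookkeeping shows that $T:=|\log\eta|$ has $\nu$-mean $|\log s|+O(1)$ and $\nu$-variance $O(1)$, uniformly in $x$ and in $s$ bounded away from $1$. For $s>\gamma_2$ I would choose $c_s=i/(i+\log s)$ and use the pointwise identity
\[
h_i(\eta)-c_s=\frac{i(\log s-T)}{(i+T)(i+\log s)}.
\]
Splitting the $\nu$-integral of $(h_i-c_s)^2$ into the good set $\{T\ge\tfrac12\log s\}$ (on which $(i+T)(i+\log s)\ge\tfrac12(i+\log s)^2$) and the bad set (whose $\nu$-mass is $O(s^{-1/2})$ by Markov applied to $E_\nu\eta\le C/s$) gives
\[
\mathrm{Var}_\nu(h_i)\le E_\nu[(h_i-c_s)^2]\le \frac{C\,i^2}{(i+\log s)^4}+\frac{C}{s^{1/2}},
\]
while Jensen applied to the convex maps $T\mapsto i/(i+T)$ and $x\mapsto x^2$ gives $E_\nu(h_i^2)\ge[E_\nu(h_i)]^2\ge c\,i^2/(i+\log s)^2$. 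Dividing, and taking $\gamma_2$ large enough that the $s^{-1/2}$ term is absorbed, yields $B_i\le F/(1+\log s)^2$ for $s>\gamma_2$, with $F$ depending only on $(a,b,p,n,\gamma_2)$. The case $s<\gamma_1$ is symmetric, with the roles of $\eta$ and $1/\eta$ exchanged and $\log s$ replaced by $\log(1/s)$.

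The main obstacle is the uniform-in-$x$ concentration of $\log\eta$ under $\nu$. Without the assumption $a<n/2$ the $\lambda$-posterior could drift toward $1$ as $\|x\|\to\infty$, smearing $\log\eta$ over an interval of length $\log(1+\|x\|^2/s)$ and destroying any $x$-free bound on $B_i$; once $a<n/2$ forces $\lambda\|x\|^2$ to remain of order $s$ in distribution, the rest is routine Gamma-moment bookkeeping together with the elementary manipulations of $h_i(\eta)=i/(i+|\log\eta|)$ above.
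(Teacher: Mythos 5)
Your proposal is correct in outline and reaches the stated bound by a genuinely different route from the paper. You write $B_i=\mathrm{Var}_\nu(h_i)/E_\nu(h_i^2)$ for the posterior $\nu$ on $\eta$, center $h_i$ at the constant $c_s=i/(i+|\log s|)$, and control the variance by a good-set/bad-set split together with a Jensen lower bound on $E_\nu(h_i)$. The paper instead changes variables to $V=\eta s$ with conditional density $f(v\mymid z)$ depending on $(x,s)$ only through $z=\|x\|^2/(\|x\|^2+s)$, and performs a second-order expansion of $i/(i+\log(v/s))$ about $1$ (its Lemmas B.4--B.5), bounding the numerator $\{E[H_i]\}^2$ from below and the denominator $E[H_i^2]$ from above separately as $1\mp 2E[\log V\mymid z]/(i+|\log s|)\pm C/(1+|\log s|)^2$ before subtracting. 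Both arguments rest on the same two inputs, which you correctly identify: uniform-in-$z$ bounds on the moments of $\log V$, with an exponential upper tail and a polynomial lower tail whose exponent is positive exactly when $a<n/2$. Your variance-centering argument is arguably more transparent about \emph{why} the bound holds ($h_i$ is nearly constant under $\nu$ because $\log\eta$ concentrates at scale $O(1)$ around $-\log s$), at the cost of an extra $s^{-1/2}$-type error term that must be absorbed by enlarging $\gamma_2$; the paper's expansion avoids that term but requires tracking moments of $|\log V|^j$ up to $j=6$.

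One step deserves more care than ``the case $s<\gamma_1$ is symmetric.'' The measure $\nu$ is not symmetric under $\eta\mapsto 1/\eta$, and the analogue of your Markov bound --- controlling $\nu(\eta<s^{-1/2})$ via $E_\nu[1/\eta]$ --- can fail: $E_\nu[1/\eta]\propto s\,(1+E[u])$, and $E[u]$ is finite only when $a<n/2-1$, which is not assumed. The correct substitute is a lower-tail bound $\nu(V<\delta)\leq C\delta^{\kappa}$ with $\kappa>0$, obtained either by splitting on $\{u>\delta^{-1/2}\}$ (whose mass is $O(\delta^{(n/2-a)/2})$ by your tail computation) and using the Gamma lower tail on the complement, or directly from the pointwise density bound $f(v\mymid z)\leq Cv^{n/2-a-1-\epsilon(b+1)}$ as in the paper's Lemma B.3. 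With that repair the small-$s$ case goes through exactly as you describe, so I would count this as an omission to be filled rather than a flaw in the approach.
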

\subsection{Proof of Theorem \ref{thm.main}}
Since the integrand of $\Delta_i$ tends to $0$ as $i$ tends to infinity, it follows that $\Delta_i\to 0$ provided for all $i$ that the integrand is bounded by an integrable function.
By Lemmas \ref{lem.Bayes.risk.diff}, \ref{lem.bound.delta}, \ref{lem.bound.A} and \ref{lem.bound.B},
\begin{align*}
 \Delta_i&=\int_{\mbR^{p}}\int_0^\infty \left\| \frac{m(\nabla \pi )}{m(\pi \eta)} - 
\frac{m(h_i^2\nabla \pi )}{m(\pi h_i^2\eta)}
 \right\|^2 m(\pi h_i^2\eta)\rd x  \rd s \\
 &\leq C\int_{\mbR^{p}}\int_0^\infty A(x,s)B_i (x,s) \rd x  \rd s \\
 &\leq CD \int_{\mbR^{p}}\int_0^\infty\left\{s^{-p/2}
\left(1+\|x\|^2/s\right)^{-p/2-1/2}
 \right\}\\ &\qquad \times \left\{ \frac{EI_{(0,\gamma_1)}(s)}{s\{1+\log (1/s) \}^2}+
\frac{I_{[\gamma_1,\gamma_2]}(s)}{s}+\frac{FI_{(\gamma_2,\infty)}(s)}{s\{1+\log s \}^2}  \right\} \rd x  \rd s\\
&=CDGH
\end{align*}
where
\begin{align*}
 G&=
 \int_{\mbR^{p}}s^{-p/2}
 \left(1+\|x\|^2/s\right)^{-p/2-1/2}
 \rd x=\int_{\mbR^p}  \left(1+\|y\|^2\right)^{-p/2-1/2}\rd y , \\
\text{and }\ H&=\int_0^\infty\left\{ \frac{EI_{(0,\gamma_1)}(s)}{s\{1+\log (1/s) \}^2}+
\frac{I_{[\gamma_1,\gamma_2]}(s)}{s}+\frac{FI_{(\gamma_2,\infty)}(s)}{s\{1+\log s \}^2}  \right\} \rd s.
\end{align*}
Note
\begin{align*}
G= \frac{\pi^{p/2}}{\Gamma(p/2)} \int_0^\infty u^{p/2-1}(1+u)^{-p/2-1/2}\rd u =\frac{\pi^{p/2}}{\Gamma(p/2)}B(p/2,1/2)<\infty
\end{align*}
and
\begin{align*}
H&=E\int_0^{\gamma_1}\frac{\rd s}{s\{1+\log (1/s) \}^2}+
 \int_{\gamma_1}^{\gamma_2}\frac{\rd s}{s} +
 F\int_{\gamma_2}^\infty\frac{\rd s}{s(1+\log s)^2} \\
 &=\frac{E}{1+\log (1/\gamma_1)} +\log\frac{\gamma_2}{\gamma_1}+\frac{F}{1+\log \gamma_2}<\infty .
\end{align*}
Hence, by the dominated convergence theorem, we conclude $\Delta_i\to 0$.
This completes the proof of Theorem \ref{thm.main}.
 \section{Minimaxity of generalized {B}ayes estimators}
\label{sec:minimax}
\cite{Maru-Straw-2005} and \cite{Maruyama-Strawderman-2009} discussed minimaxity of the generalized Bayes estimators corresponding to $\pi(\theta,\eta)$ given by \eqref{our.prior}.
Recall that \citeapos{Baranchik-1970} sufficient condition for a shrinkage estimator
\begin{align*}
\left(1-\frac{\phi(\|x\|^2/s)}{\|x\|^2/s}\right)x
\end{align*}	
to be minimax is that a) $\phi(w)$ is non-decreasing, and b) $ 0\leq \phi(w)\leq 2(p-2)/(n+2)$.
The following lemma (Lemma 2.2 of \cite{Maru-Straw-2005}) summarizes the behavior of the generalized Bayes estimators corresponding to $\pi(\theta,\eta)$ given by \eqref{our.prior}.
\begin{lemma}
 \begin{enumerate}
  \item The generalized Bayes estimator corresponding to the prior \eqref{our.prior} is of the form
\begin{align*}
\delta_\pi(x,s)=\left(1-\frac{\phi_\pi(\|x\|^2/s)}{\|x\|^2/s}\right)x.
\end{align*}	
  \item Assume $-p/2-1<a<n/2-1$ and $b\geq 0$.
	Then $ \phi_\pi(w)$ is increasing and approaches $(p/2+a+1)/(n/2-a-1)$ as $w\to\infty$.
 \end{enumerate}
\end{lemma}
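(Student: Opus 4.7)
The plan is to reduce the problem to a one-dimensional posterior computation in $\lambda$ by exploiting conditional normal--normal conjugacy. Under the scaled loss $\eta\|d-\theta\|^2$, the generalized Bayes estimator is $\delta_\pi(x,s) = E[\eta\theta\mymid x,s]/E[\eta\mymid x,s]$, as also follows from Lemma \ref{lem.form.Bayes} after integration by parts in $\theta$. Since, conditionally on $(\eta,\lambda)$, the posterior of $\theta$ is $N_p((1-\lambda)x,(1-\lambda)\eta^{-1}I_p)$ by standard conjugacy, iterated expectation gives $\delta_\pi(x,s) = (1 - E[\eta\lambda\mymid x,s]/E[\eta\mymid x,s])x$. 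Marginalizing $\theta$ (using $x\mymid\eta,\lambda \sim N_p(0,(\eta\lambda)^{-1}I_p)$) in the joint density and evaluating the resulting gamma integral in $\eta$, with the $\Gamma((n+p)/2+1)$, $s^{-(n+p)/2-1}$ and beta normalizing factors cancelling between numerator and denominator, yields
\[
\frac{E[\eta\lambda\mymid x,s]}{E[\eta\mymid x,s]} = \frac{\int_0^1 \lambda^{a+p/2+1}(1-\lambda)^b(1+\lambda w)^{-(n+p)/2-1}\rd\lambda}{\int_0^1 \lambda^{a+p/2}(1-\lambda)^b(1+\lambda w)^{-(n+p)/2-1}\rd\lambda}
\]
with $w = \|x\|^2/s$; hence the shrinkage factor depends on $(x,s)$ only through $w$, and part 1 follows with $\phi_\pi(w) = w$ times this ratio.

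For monotonicity of $\phi_\pi$, I would substitute $u = \lambda w$ in both integrals, rewriting
\[
\phi_\pi(w) = \frac{\int_0^w u^{a+p/2+1}(1-u/w)^b(1+u)^{-(n+p)/2-1}\rd u}{\int_0^w u^{a+p/2}(1-u/w)^b(1+u)^{-(n+p)/2-1}\rd u} = E[U_w],
\]
where $U_w$ has density proportional to $q_w(u) = u^{a+p/2}(1-u/w)^b(1+u)^{-(n+p)/2-1}$ on $[0,w]$. For $w_1 < w_2$, the likelihood ratio $q_{w_2}(u)/q_{w_1}(u)$ equals $\{(1-u/w_2)/(1-u/w_1)\}^b$ on $[0,w_1)$ and $+\infty$ on $(w_1,w_2]$; since
\[
\frac{d}{du}\log\frac{1-u/w_2}{1-u/w_1} = \frac{w_2-w_1}{(w_2-u)(w_1-u)} > 0,
\]
this ratio is non-decreasing in $u$ when $b \geq 0$. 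The MLR in $w$ then implies stochastic monotonicity, so $E[U_w] = \phi_\pi(w)$ is non-decreasing in $w$.

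For the limit as $w \to \infty$, $(1-u/w)^b \leq 1$ when $b \geq 0$ and the dominating function $u^{a+p/2}(1+u)^{-(n+p)/2-1}$ on $[0,\infty)$ is integrable precisely under the hypothesis $-p/2-1 < a < n/2-1$; dominated convergence together with $B(\alpha,\beta) = \int_0^\infty u^{\alpha-1}(1+u)^{-\alpha-\beta}\rd u$ and $\Gamma(\alpha+1) = \alpha\Gamma(\alpha)$ gives
\[
\lim_{w\to\infty}\phi_\pi(w) = \frac{B(a+p/2+2,\, n/2-a-1)}{B(a+p/2+1,\, n/2-a)} = \frac{p/2+a+1}{n/2-a-1}.
\]
The only non-routine step is the MLR verification, which reduces to the one-line sign check of the logarithmic derivative displayed above; everything else is bookkeeping with gamma and beta integrals.
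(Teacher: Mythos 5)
Your proof is correct. Note that the paper itself does not prove this lemma --- it imports it verbatim as Lemma~2.2 of Maruyama and Strawderman (2005) --- so there is no in-paper argument to compare against; judged on its own, your derivation is complete and self-contained. The reduction of part~1 via normal--normal conjugacy (posterior mean $(1-\lambda)x$ given $(\eta,\lambda)$, marginal $x\mid\eta,\lambda\sim N_p(0,(\eta\lambda)^{-1}I_p)$, then a gamma integral in $\eta$) gives exactly the ratio of $\lambda$-integrals one gets by brute force, and your route to part~2 differs in an interesting way from the cited source and from the related manipulations in this paper's appendix: there the authors work with the substitution $t=(1+w)\lambda/(1+\lambda w)$ and argue through monotone beta-type integrals, whereas your substitution $u=\lambda w$ turns $\phi_\pi(w)$ into the mean of a family of densities on $[0,w]$ with monotone likelihood ratio in $w$ (for $b\geq 0$), so monotonicity falls out of stochastic dominance in one line, and the limit follows from dominated convergence and the beta-integral identity. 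Two small points worth tightening: (i) your MLR argument as written yields ``non-decreasing,'' while the lemma asserts ``increasing''; this is immediate to upgrade, since for $b>0$ the log-derivative $(w_2-w_1)/\{(w_2-u)(w_1-u)\}$ is strictly positive, and for $b=0$ the support of $U_{w_2}$ strictly contains that of $U_{w_1}$ with positive extra mass, so $E[U_{w_2}]>E[U_{w_1}]$ in either case; (ii) the integrability bookkeeping should be attributed separately to numerator and denominator --- the denominator's dominating function needs only $-p/2-1<a<n/2$, and it is the numerator's tail that forces $a<n/2-1$ --- but the combined condition you state is the right one.
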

Noting
\begin{equation}\label{a.minimax}
 0< \frac{p/2+a+1}{n/2-1-a}\leq 2\frac{p-2}{n+2}\ \Leftrightarrow \ -\frac{p}{2}-1<a\leq \xi(p,n)
  \ \left(<\frac{n}{2}-1\right)
\end{equation}  
	where
\begin{equation}
\xi(p,n)=-2+\frac{(p-2)(n+2)}{2(2p+n-2)},
\end{equation}
we have the minimaxity result which is essentially given in Theorem 2.3 of \cite{Maru-Straw-2005}.
\begin{lemma}
$\delta_\pi(x,s)$ is minimax provided $b\geq 0$ and $ -p/2-1< a\leq \xi(p,n)$.
\end{lemma}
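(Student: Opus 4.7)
The plan is to apply Baranchik's sufficient condition using the structural information about $\phi_\pi$ provided by the preceding lemma. Since the hypothesis $-p/2-1 < a \leq \xi(p,n)$ is strictly stronger than $-p/2-1 < a < n/2-1$ (as noted by the strict inequality $\xi(p,n) < n/2-1$ in \eqref{a.minimax}), and $b \geq 0$ is common to both, the preceding lemma applies and tells us that $\delta_\pi$ has Baranchik form with shrinkage function $\phi_\pi$ that is increasing on $(0,\infty)$ with limit
\[
 \lim_{w\to\infty} \phi_\pi(w) = \frac{p/2+a+1}{n/2-a-1}.
\]

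First I would verify condition (a) of Baranchik's criterion: monotonicity of $\phi_\pi$ is immediate from the preceding lemma. Then for condition (b), since $\phi_\pi$ is increasing, one has $\sup_{w\geq 0}\phi_\pi(w)$ equal to the limit displayed above. The equivalence \eqref{a.minimax} shows exactly that this supremum is bounded by $2(p-2)/(n+2)$ precisely when $a \leq \xi(p,n)$, while positivity of the limit (ensuring $\phi_\pi$ is eventually nonnegative) corresponds to the lower bound $a > -p/2-1$. For the remaining part of condition (b), namely $\phi_\pi(w) \geq 0$ at the lower end, I would note that the generalized Bayes estimator shrinks toward the origin and $\phi_\pi(0) = 0$ by the standard computation for scale mixtures of the Strawderman-type (this follows from the form in Lemma \ref{lem.form.Bayes} evaluated at $x=0$, or directly from the representation in Section 2 of \cite{Maru-Straw-2005}); combined with monotonicity this yields $\phi_\pi(w) \geq 0$ for all $w > 0$.

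With both Baranchik hypotheses verified under $b \geq 0$ and $-p/2-1 < a \leq \xi(p,n)$, Baranchik's theorem delivers minimaxity of $\delta_\pi$, completing the proof.

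There is no real obstacle here: the lemma is essentially a bookkeeping step that translates the qualitative information about $\phi_\pi$ (monotonicity and limiting value) from the preceding lemma into the quantitative Baranchik bounds via the equivalence \eqref{a.minimax}. The only mild subtlety is checking nonnegativity of $\phi_\pi$ at $w = 0$, which is handled by the structural form of the generalized Bayes rule and monotonicity.
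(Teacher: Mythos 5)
Your proposal is correct and follows essentially the same route as the paper: the paper likewise combines the preceding lemma (monotonicity of $\phi_\pi$ and its limiting value $(p/2+a+1)/(n/2-a-1)$), the equivalence \eqref{a.minimax}, and Baranchik's sufficient condition, deferring the details to Theorem~2.3 of \cite{Maru-Straw-2005}. Your extra remark on nonnegativity of $\phi_\pi$ near $w=0$ is a harmless (and correct) addition that the paper leaves implicit.
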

Recall that we assumed $a>-1$ for establishing admissibility in Theorem \ref{thm.main}.
Since $n\geq 3$ and $p>4n/(n-2)$ implies $\xi(p,n)>-1$, we have the following result. 
\begin{thm}\label{thm.adm.mini}
 Suppose $n\geq 3$ and $p>4n/(n-2)$.
 Then the generalized Bayes estimator is minimax and admissible provided
 \begin{equation*}
  -1<a\leq \xi(p,n)\text{ and }b\geq 0.
 \end{equation*}
\end{thm}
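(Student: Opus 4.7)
The plan is to derive Theorem \ref{thm.adm.mini} by combining Theorem \ref{thm.main} with the minimaxity lemma immediately preceding it, verifying that the stated hypotheses imply both sets of sufficient conditions. First I would check that the interval $-1<a\leq \xi(p,n)$ is non-empty, i.e.\ that $\xi(p,n)>-1$. Starting from $-2+(p-2)(n+2)/\{2(2p+n-2)\}>-1$, rearrangement gives $(p-2)(n+2)>2(2p+n-2)$, which (using $n\geq 3$, so $n-2>0$) simplifies to $p(n-2)>4n$, i.e.\ $p>4n/(n-2)$. Thus the hypothesis $p>4n/(n-2)$ is precisely what guarantees that the range assumed for $a$ is non-vacuous.

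Next I would check the conditions required for minimaxity. The minimaxity lemma requires $b\geq 0$ and $-p/2-1<a\leq \xi(p,n)$: the condition on $b$ and the upper bound on $a$ are assumed directly, while the lower bound on $a$ follows from $a>-1>-p/2-1$ (valid whenever $p\geq 1$). Finally, I would check the conditions for admissibility via Theorem \ref{thm.main}, which requires $-1<a<n/2$ and $b>-1/2$. The assumption $b\geq 0$ trivially gives $b>-1/2$, and the displayed chain $a\leq \xi(p,n)<n/2-1<n/2$ in \eqref{a.minimax} together with $a>-1$ yields the required bounds on $a$. Both results then apply to $\delta_\pi$, establishing simultaneous admissibility and minimaxity.

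The proof is essentially bookkeeping on top of Theorem \ref{thm.main} and the minimaxity lemma, so I do not anticipate any genuine obstacle. The only substantive point is the algebraic equivalence $\xi(p,n)>-1\Leftrightarrow p>4n/(n-2)$, which explains the numerical hypothesis in the statement and ensures that the overlap of the admissibility region and the minimaxity region is non-empty.
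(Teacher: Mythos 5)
Your argument is correct and matches the paper's own (very brief) justification: both reduce the theorem to Theorem \ref{thm.main} and the preceding minimaxity lemma, with the only substantive point being the equivalence $\xi(p,n)>-1\Leftrightarrow p>4n/(n-2)$ (for $n\geq 3$), which guarantees the two parameter regions overlap. Your verification of the remaining inequalities ($a>-1>-p/2-1$, $a\leq\xi(p,n)<n/2-1<n/2$, $b\geq 0>-1/2$) is exactly the bookkeeping the paper leaves implicit.
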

A particularly interesting case is $b=n/2-a-2$.
As pointed out in (2.7) of \cite{Maru-Straw-2005}, the generalized Bayes estimator corresponding to the prior \eqref{our.prior} with $b=n/2-a-2$ has the simple closed form, as a variant of the James--Stein estimator, given by
\begin{equation*}
 \left(1-\frac{(p/2+a+1)/(n/2-1-a)}{\|X\|^2/S+1+\{(p/2+a+1)/(n/2-1-a)\}}\right)X.
\end{equation*}
As a corollary of Theorem \ref{thm.adm.mini}, we have the following result.
\begin{corollary}\label{cor.simple}
 Suppose $n\geq 3$ and $p>4n/(n-2)$.
 Then
 \begin{equation*}
 \left(1-\frac{2(p-2)/(n+2)}{\|X\|^2/S+1+2(p-2)/(n+2)}\right)X.
 \end{equation*}
is admissible and minimax.
\end{corollary}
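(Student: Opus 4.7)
The plan is to specialize the hyperparameters $(a,b)$ so that the general admissible–minimax estimator from Theorem \ref{thm.adm.mini} reduces to the simple closed-form estimator in the Corollary, and then to verify that the required hypotheses $-1<a\leq \xi(p,n)$ and $b\geq 0$ are satisfied under $n\geq 3$ and $p>4n/(n-2)$.

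First, I would match the shrinkage coefficient. Take $a=\xi(p,n)$, which by \eqref{a.minimax} is the equality case
$$\frac{p/2+a+1}{n/2-1-a} = 2\,\frac{p-2}{n+2}.$$
Then choose $b=n/2-a-2$, so that the estimator falls into the simple James–Stein-type family displayed just before the Corollary, namely
$$\left(1-\frac{(p/2+a+1)/(n/2-1-a)}{\|X\|^2/S+1+\{(p/2+a+1)/(n/2-1-a)\}}\right)X.$$
Substituting the equality above for the ratio $(p/2+a+1)/(n/2-1-a)$ produces exactly the estimator in the Corollary.

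Second, I would check the hypotheses of Theorem \ref{thm.adm.mini} at this choice. The upper bound $a\leq \xi(p,n)$ holds by construction (with equality). For the lower bound $a>-1$, a short rearrangement gives
$$\xi(p,n) > -1 \iff (p-2)(n+2) > 2(2p+n-2) \iff p(n-2) > 4n,$$
which is precisely the standing assumption $p>4n/(n-2)$ (and requires $n\geq 3$ so that the inequality $p(n-2)>4n$ makes sense). For $b\geq 0$, substituting $a=\xi(p,n)$ into $b=n/2-a-2$ and clearing denominators reduces the inequality $b\geq 0$ to $p(n-2)+n^2+4\geq 0$, which is obvious since $n\geq 3$.

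Since both admissibility and minimaxity are granted by Theorem \ref{thm.adm.mini} under $-1<a\leq \xi(p,n)$ and $b\geq 0$, the Corollary follows immediately. I do not anticipate any real obstacle: this is a purely algebraic verification that the parameter pair producing the closed-form estimator lies inside the admissible–minimax region of Theorem \ref{thm.adm.mini}. The only mild care required is to keep track of signs (in particular $n/2-1-a>0$, which follows from $a\leq \xi(p,n)<n/2-1$) when clearing denominators in the two rearrangements above.
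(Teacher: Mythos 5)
Your proposal is correct and follows the same route as the paper: specialize to $a=\xi(p,n)$ and $b=n/2-a-2$ so that the closed-form James--Stein-type estimator of \cite{Maru-Straw-2005} has shrinkage constant $2(p-2)/(n+2)$, then check that $-1<a\leq\xi(p,n)$ and $b\geq 0$ hold under $n\geq 3$ and $p>4n/(n-2)$ so that Theorem \ref{thm.adm.mini} applies. Your algebraic verifications ($\xi(p,n)>-1\iff p(n-2)>4n$ and $b\geq 0\iff p(n-2)+n^2+4\geq 0$) are accurate and in fact slightly more explicit than the paper's own presentation.
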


\section{Concluding remarks}
\label{sec:cr}
We have studied admissibility of generalized Bayes estimators of a multivariate normal mean vector in the presence of an unknown scale under scaled squared error loss.
The hierarchical prior structure is proper on $\theta$ given $\eta$ ($=1/\sigma^2$) and is the improper invariant prior ($\pi(\eta)=1/\eta$) on the scale parameter.
We have, to our knowledge, given the first class of improper generalized Bayes admissible estimators for this problem for $p>2$. We note, for $p=1,2$, $X$ is known to be admissible, since it is admissible for each fixed $\sigma^2$ (See for example Lemma 5.2.12 of \cite{Lehmann-Casella-1998}).
The results in this paper are complementary to those in our earlier paper,  \cite{Maruyama-Strawderman-2020}, which gives a class of generalized Bayes estimators that are admissible within the class of scale equivariant estimators.
This paper thereby makes substantial progress in an important problem that has long resisted progress.
Generally, finding admissible procedure in problems with nuisance parameters has been difficult, and the current problem is no exception.
To our knowledge, the only known admissible procedure in the current problem have been proper Bayes estimators, with the exception mentioned above.

Earlier we (\cite{Maru-Straw-2005}) also studied minimaxity of generalized Bayes minimax estimators.
Happily, the intersection of the classes of procedures, studied in these papers is not empty as shown in Section \ref{sec:minimax}.
In particular the estimator given in Corollary \ref{cor.simple} is one such admissible minimax estimator which has a surprisingly simple and explicit form.


\appendix

 \section{Proofs of lemmas in Section 2}
\label{app.lemmas.sec}
 We take the following notation for \eqref{our.prior}, 
\begin{equation}\label{our.prior.2}
 \pi(\theta,\eta)
  =\eta^{-1}\times \eta^{p/2}\pi(\eta\|\theta\|^2\mymid a,b),
\end{equation}
where
\begin{equation}\label{pi.r.a.b}
 \begin{split}
&  \pi(r\mymid a,b) \\ 
 &=
  \int_0^1\frac{1}{(2\pi)^{p/2}}\left(\frac{\lambda}{1-\lambda}\right)^{p/2}
  \exp\left(-\frac{\lambda}{1-\lambda}\frac{r}{2}\right)
  \frac{\lambda^a(1-\lambda)^b}{B(a+1,b+1)}\rd \lambda.
 \end{split}
\end{equation} 
Recall
\begin{equation*}
f_{x}(x\mymid\theta,\eta)=\frac{\eta^{p/2}}{(2\pi)^{p/2}}\exp\left(-\frac{\eta}{2}\|x -\theta\|^2\right), \ f_s(s\mymid\eta)=\frac{\eta^{n/2}s^{n/2-1}}{\Gamma(n/2)2^{n/2}}\exp\left(-\frac{\eta s}{2}\right)
\end{equation*}
and
\begin{equation*}
 m(\psi)  = \int_{\mbR^p}\int_0^\infty 
\psi(\theta,\eta)f_{x }(x \mymid\theta,\eta)f_s(s\mymid\eta)
   \rd \theta \rd \eta,
\end{equation*}
where $ \psi(\theta,\eta)$ is possibly a vector function.

  \subsection{Proof of Lemma \ref{lem.Blyth}}
\label{sec.proof.lemma.1}
  Suppose that $ \delta_\pi$ is inadmissible and hence  $\delta'$ satisfies
  \begin{equation}\label{eq.weak}
R(\theta,\eta,\delta') \leq R(\theta,\eta,\delta_{\pi })
  \end{equation}
for all $(\theta,\eta)$ and
\begin{equation}\label{eq.strict}
R(\theta,\eta,\delta') < R(\theta,\eta,\delta_{\pi }) \text{ for some }(\theta_0,\eta_0).
\end{equation}
By \eqref{eq.strict}, we have
\begin{align*}
 \int_{\mbR^p}\int_0^\infty
 \| \delta_\pi(x,s)-\delta'(x,s)\|^2 f_{x }(x \mymid\theta_0,\eta_0)f_s(s\mymid\eta_0)\rd x\rd s>0.
\end{align*}
Further we have
\begin{align*}
 & 
 \int_{\mbR^p}\int_0^\infty
 \| \delta_\pi(x,s)-\delta'(x,s)\|^2 f_{x }(x \mymid\theta,\eta)f_s(s\mymid\eta)\rd x\rd s \\
 &= 
 \int_{\mbR^p}\int_0^\infty
 \| \delta_\pi(x,s)-\delta'(x,s)\|^2
 \frac{f_{x }(x \mymid\theta,\eta)f_s(s\mymid\eta)}{f_{x }(x \mymid\theta_0,\eta_0)f_s(s\mymid\eta_0)} f_{x }(x \mymid\theta_0,\eta_0)
 f_s(s\mymid\eta_0)
 \rd x\rd s. 
\end{align*}
Since the ratio 
\begin{align*}
 \frac{f_{x }(x \mymid\theta,\eta)f_s(s\mymid\eta)}{f_{x }(x \mymid\theta_0,\eta_0)f_s(s\mymid\eta_0)}
\end{align*}
is continuous in $(x,s)$ and positive, it follows that
\begin{align*}
 \int_{\mbR^p}\int_0^\infty
 \| \delta_\pi(x,s)-\delta'(x,s)\|^2 f_{x }(x \mymid\theta,\eta)f_s(s\mymid\eta)\rd x\rd s >0
\end{align*}
for all $(\theta,\eta)$.

Set $\delta''=(\delta_\pi+\delta')/2$. Then we have
\begin{align*}
 \|\delta''-\theta\|^2=\frac{\|\delta_\pi-\theta\|^2+\|\delta'-\theta\|^2}{2}
-\frac{\|\delta_\pi-\delta'\|^2}{4},
\end{align*}
and
\begin{align*}
R(\theta,\eta,\delta'') 
&=E\left[\eta\|\delta''-\theta\|^2\right] \\
&<(1/2)E\left[\eta\|\delta'-\theta\|^2\right] +(1/2)E\left[\eta\|\delta_\pi -\theta\|^2\right] \\
 &= \frac{1}{2}\left\{R(\theta,\eta,\delta')+R(\theta,\eta,\delta_\pi)\right\} \\
 &\leq  R(\theta,\eta,\delta_\pi),
\end{align*}
for all $(\theta,\eta)$.
Recall
\begin{align*}
 \Delta_i = \int_{\mbR^{p}}\int_0^\infty \left\{
R(\theta,\eta,\delta_{\pi })-R(\theta,\eta,\delta_{\pi i}) \right\}
\pi_i(\theta,\eta)
 \rd  \theta \rd \eta.
\end{align*}
Then we have
\begin{align*}
\Delta_i
 &
\geq \int_{\mbR^{p}}\int_0^\infty \left\{
R(\theta,\eta,\delta_{\pi })-R(\theta,\eta,\delta'') \right\}
\pi_i(\theta,\eta)
 \rd  \theta \rd \eta \\
&\geq \int_{\mbR^{p}}\int_0^\infty \left\{
R(\theta,\eta,\delta_{\pi })-R(\theta,\eta,\delta'') \right\}
\pi_1(\theta,\eta)
 \rd  \theta \rd \eta \\
&>0
\end{align*}
which contradicts $ \Delta_i \to 0$ as $i\to\infty$.

\subsection{Proof of Lemma \ref{lem.form.Bayes}}
\label{sec.proof.lemma.2}
Under the loss given by \eqref{loss},
the generalized Bayes estimator corresponding to $\pi (\theta,\eta)$ is
\begin{equation}\label{eq:bayes_expression}
\begin{split}
 \delta_\pi  (x ,s)
 &=\frac{
\int_{\mbR^p}\int_0^\infty 
\theta \eta f_{x }(x \mymid \theta,\eta)f_s(s\mymid \eta)\pi (\theta,\eta)\rd \theta \rd \eta}
 {
\int_{\mbR^p}\int_0^\infty 
 \eta f_{x }(x \mymid \theta,\eta)f_s(s\mymid \eta)\pi (\theta,\eta)\rd \theta \rd \eta}  \\
 &=x + \frac{
\int_{\mbR^p}\int_0^\infty 
(\theta-x ) \eta f_{x }(x \mymid \theta,\eta)f_s(s\mymid \eta)\pi (\theta,\eta)\rd \theta \rd \eta}
 {
\int_{\mbR^p}\int_0^\infty 
 \eta f_{x }(x \mymid \theta,\eta)f_s(s\mymid \eta)
\pi (\theta,\eta)\rd \theta \rd \eta} \\
 &=x + \frac{
 \int_{\mbR^p}\int_0^\infty 
\nabla_{\theta}\pi (\theta,\eta)f_{x }(x \mymid \theta,\eta)f_s(s\mymid \eta)
\rd \theta \rd \eta}
 {
 \int_{\mbR^p}\int_0^\infty 
\eta f_{x }(x \mymid \theta,\eta)f_s(s\mymid \eta)
\pi (\theta,\eta)\rd \theta \rd \eta} \\
&=x  + \frac{m(\nabla_{\theta} \pi )}{m(\pi \eta)},
\end{split}
\end{equation}
where $\nabla_{\theta}=(\partial/\partial \theta_1,\partial/\partial \theta_2,\dots ,\partial/\partial \theta_p )'$ and the third equality follows from the \cite{Stein-1974} identity.

\subsection{Proof of Lemma \ref{lem.Bayes.risk.diff}}
\label{sec.proof.lemma.3}
\begin{equation}\label{eq:Bdifference}
\begin{split}
 \Delta_i &=
 \int_{\mbR^p}\int_0^\infty
 \left\{E\left[\eta\|\delta_{\pi }-\theta\|^2\right]-E\left[\eta\|\delta_{\pi i}-\theta\|^2\right] \right\}
\pi_i(\theta,\eta)
 \rd  \theta \rd \eta \\
 &=
 \int_{\mbR^{p}}\int_0^\infty 
\left\{\int_{\mbR^p}\int_0^\infty \{\|\delta_\pi (x,s)-\theta\|^2-\|\delta_{\pi i}(x,s)-\theta\|^2\}
f_{x }(x \mymid\theta,\eta)f_s(s\mymid\eta)\rd x \rd s \right\}\\
&\qquad \times  \eta\pi_i(\theta,\eta) \rd  \theta \rd \eta \\
 &=
 \int_{\mbR^{p}}\int_0^\infty
 (\|\delta_{\pi }\|^2 - \|\delta_{\pi i} \|^2) m(\pi_i\eta) \rd x  \rd s
 -2
  \int_{\mbR^{p}}\int_0^\infty
 m(\theta\eta \pi_i)'(\delta_\pi -\delta_{\pi i}) \rd x\rd s \\
 &=
\int_{\mbR^{p}}\int_0^\infty
 \| \delta_{\pi } - \delta_{\pi i} \|^2 m(\pi_i\eta) \rd x  \rd s \\
 &=
\int_{\mbR^{p}}\int_0^\infty
 \left\| \frac{m(\nabla \pi )}{m(\pi \eta)} - 
\frac{m(\nabla \{\pi h_i^2\})}{m(\pi h_i^2\eta)}
\right\|^2 m(\pi h_i^2\eta)\rd x  \rd s \\
 &=
\int_{\mbR^{p}}\int_0^\infty
 \left\| \frac{m(\nabla \pi )}{m(\pi \eta)} - 
\frac{m(h_i^2\nabla \pi )}{m(\pi h_i^2\eta)}
\right\|^2 m(\pi h_i^2\eta)\rd x  \rd s,
\end{split}
\end{equation}
where the fifth equality follows from \eqref{eq:bayes_expression}.

\subsection{Proof of Lemma \ref{lem.hi}}
\label{sec.proof.lemma.4}
The results follow from the integrals,
\begin{equation}\label{2i2i}
 \begin{split}
 \int_0^\infty \eta^{-1}h_i^2(\eta)\rd \eta&=
 \int_0^1 \frac{i^2\rd\eta}{\eta \{i+\log (1/\eta)\}^2}
+\int_1^\infty \frac{i^2\rd\eta}{\eta \{i+\log \eta\}^2}  \\
&= \left[\frac{i^2}{i+\log (1/\eta)}\right]_0^1+\left[-\frac{i^2}{i+\log \eta}\right]_1^\infty=
 2i
\end{split}
\end{equation}
and
\begin{equation}
 \begin{split}
  \int_{\mbR^p}\int_0^\infty
  \pi_i(\theta,\eta)\rd\theta\rd \eta &=
  \int_{\mbR^p}\int_0^\infty
  \pi(\theta,\eta)h_i^2(\eta) \rd\theta\rd \eta\\
&=\int_{\mbR^p} \eta^{p/2}\pi(\eta\|\theta\|^2\mymid a,b)\rd \theta\int_0^\infty\eta^{-1}h_i^2(\eta)\rd \eta \\
&=1\times 2i=2i.\end{split}
\end{equation}

\subsection{Proof of Lemma \ref{lem.integrand.converge}}
\label{sec.proof.lemma.5}
Recall $\delta_\pi$ is expressed as
\begin{align*}
\frac{\int_{\mbR^p}\int_0^\infty 
\theta \eta f_{x }(x \mymid \theta,\eta)f_s(s\mymid \eta)\pi(\theta,\eta)\rd \theta \rd \eta}
{\int_{\mbR^p}\int_0^\infty 
 \eta f_{x }(x \mymid \theta,\eta)f_s(s\mymid \eta)\pi(\theta,\eta)\rd \theta \rd \eta} 
\end{align*}
and $j$-th component is given by
\begin{align*}
\delta_{\pi,j}= \frac{\left(\int_{-\infty}^0+\int_0^\infty\right) \theta_j\left\{\int_{\mbR^{p-1}} \int_0^\infty 
 \eta f_{x }(x \mymid \theta,\eta)f_s(s\mymid \eta)\pi(\theta,\eta)\rd \theta_{-j}\rd \eta\right\}\rd\theta_j }
{\int_{\mbR^p}\int_0^\infty 
 \eta f_{x }(x \mymid \theta,\eta)f_s(s\mymid \eta)\pi(\theta,\eta)\rd \theta \rd \eta}, 
\end{align*}
where $\theta_{-j}=(\theta_1,\dots,\theta_{j-1},\theta_{j+1},\dots,\theta_p)\in\mathbb{R}^{p-1}$.
Due to the existence of $\delta_\pi$, the following three integrals exist and are finite for any $x$ and $s$,
\begin{align}
& \int_{-\infty}^0 (-\theta_j)\left\{\int_{\mbR^{p-1}} \int_0^\infty 
 \eta f_{x }(x \mymid \theta,\eta)f_s(s\mymid \eta)\pi(\theta,\eta)\rd \theta_{-j}\rd \eta\right\}\rd\theta_j, \label{eq:int.1}\\
&\int_0^\infty \theta_j\left\{\int_{\mbR^{p-1}}\int_0^\infty 
 \eta f_{x }(x \mymid \theta,\eta)f_s(s\mymid \eta)\pi(\theta,\eta)\rd \theta_{-j}\rd \eta\right\}\rd\theta_j,  \label{eq:int.2}\\
&\int_{\mbR^p}\int_0^\infty 
 \eta f_{x }(x \mymid \theta,\eta)f_s(s\mymid \eta)\pi(\theta,\eta)\rd \theta \rd \eta.\label{eq:int.3}
\end{align}
As in \eqref{eq:Bdifference}, the integrand of $\Delta_i$ can be written as
\begin{equation*}
 \| \delta_{\pi } - \delta_{\pi i} \|^2 m(\pi_i\eta).
\end{equation*}
By continuity of squared norm, 
it suffices to show the $j$-th component of $\delta_{\pi i}$ approaches $ \delta_{\pi,j}$ as $i\to\infty$.
Note
\begin{align*}
 \delta_{\pi {i},j}
 &=
\frac{\int_{\mbR^p}\int_0^\infty 
\theta_j \eta f_{x }(x \mymid \theta,\eta)f_s(s\mymid \eta)\pi(\theta,\eta)h_i^2(\eta)\rd \theta \rd \eta}
{\int_{\mbR^p}\int_0^\infty 
 \eta f_{x }(x \mymid \theta,\eta)f_s(s\mymid \eta)\pi(\theta,\eta)h_i^2(\eta)\rd \theta \rd \eta} \\
 &=
\frac{\int_{\mbR^p}\int_0^\infty 
\theta_j \eta f_{x }(x \mymid \theta,\eta)f_s(s\mymid \eta)\pi(\theta,\eta)h_i^2(\eta)\rd \theta \rd \eta}
{\int_{\mbR^p}\int_0^\infty 
 \eta f_{x }(x \mymid \theta,\eta)f_s(s\mymid \eta)\pi(\theta,\eta)h_i^2(\eta)\rd \theta \rd \eta},
\end{align*}
where the numerator is decomposed as
\begin{align*}
& -\int_{-\infty}^0 (-\theta_j)\left\{\int_{\mbR^{p-1}} \int_0^\infty 
 \eta f_{x }(x \mymid \theta,\eta)f_s(s\mymid \eta)\pi(\theta,\eta)h_i^2(\eta)\rd \theta_{-j}\rd \eta\right\}\rd\theta_j \\
&\qquad + \int_0^\infty \theta_j\left\{\int_{\mbR^{p-1}} \int_0^\infty 
 \eta f_{x }(x \mymid \theta,\eta)f_s(s\mymid \eta)\pi(\theta,\eta)h_i^2(\eta)\rd \theta_{-j}\rd \eta\right\}\rd\theta_j.
\end{align*}
Note $h_i\leq 1$ and $\lim_{i\to\infty}h_i= 1$.
By the dominated convergence theorem, the two terms and the denominator converge the integrals \eqref{eq:int.1}, \eqref{eq:int.2}, \eqref{eq:int.3} respectively, which implies that the $j$-th component of $\delta_{\pi i}$ approaches $ \delta_{\pi,j}$ as $i\to\infty$.

\medskip
\begin{mycomment}
In Appendix \ref{sec.proof.lemma.6} -- \ref{sec:A}, \ref{sec:B} and \ref{sec.proof.lemma.2.8},
there are several positive constants denoted by $Q_i$, $C_i$ and $T_i$ (for $i=1,2,\dots,$).


The positive constants in Lemmas \ref{lem.bound.delta} -- \ref{lem.bound.B}, $C$, $D$, $E$ and $F$, $\gamma_1$ and $\gamma_2$, are expressed in terms of the $Q$'s and $C$'s as follows.
\begin{align*}
 C&=32Q_1^2Q_2 \\
 D&=Q_3Q_4 \\
 E&= 2C_6+2C_7\\
 F&= 2C_9+2C_{10}\\
 \gamma_1&= \min\{C_5, 1/\exp(4C_4(1))\} \\
\gamma_2&= \max\{C_8,\exp(4C_4(1))\}.
\end{align*}
\end{mycomment}
 
\subsection{Proof of Lemma \ref{lem.bound.delta}}
\label{sec.proof.lemma.6}
Assume $b>-1/2$.
By Part \ref{lem:pirab.1} of Lemma \ref{lem:pirab}, there exists $Q_1>0$ such that
\begin{equation}\label{Q1}
 \begin{split}
 \|\theta\|\frac{\|\nabla_\theta \pi (\theta,\eta)\|}{\pi (\theta,\eta)}
   &=\|\theta\|\frac{\|2\eta\theta (\rd /\rd r)\pi(r\mymid a,b)|_{r=\eta\|\theta\|^2}\|\eta^{p/2-1}}
{\pi(\eta\|\theta\|^2\mymid a,b)\eta^{p/2-1}} \\
  &=2 \left\{r\frac{|\pi'(r)|}{\pi(r)}\right\}|_{r=\eta\|\theta\|^2}  <2 Q_1,
 \end{split}
\end{equation} 
for any $\theta$ and $\eta$.

In the integrand of \eqref{eq:Bdifference}, we have
\begin{equation}\label{Q2}
 \begin{split}
& \left| \frac{1}{m(\pi \eta)} - \frac{h_i^2}{m(\pi h_i^2\eta)}\right|
  \\
& =\left( \frac{1}{\sqrt{m(\pi \eta)}} + \frac{h_i}{\sqrt{m(\pi h_i^2\eta)}}\right)
\left| \frac{1}{\sqrt{m(\pi \eta)}} - \frac{h_i}{\sqrt{m(\pi h_i^2\eta)}}\right| \\
& =\frac{1}{\sqrt{m(\pi \eta)m(\pi h_i^2\eta)}}
\left( \frac{\sqrt{m(\pi h_i^2\eta)}}{\sqrt{m(\pi \eta)}} + h_i\right)
\left| 1 - \frac{\sqrt{m(\pi \eta)}}{\sqrt{m(\pi h_i^2\eta)}}h_i\right| \\
& \leq\frac{2}{\sqrt{m(\pi \eta)m(\pi h_i^2\eta)}}
\left| 1 - \frac{\sqrt{m(\pi \eta)}}{\sqrt{m(\pi h_i^2\eta)}}h_i\right|, 
 \end{split}
\end{equation}
where the inequality follows from the fact $0\leq h_i\leq 1$.
Then
\begin{align}
& \left\| \frac{m(\nabla \pi )}{m(\pi \eta)} - \frac{m(h_i^2\nabla \pi )}{m(\pi h_i^2\eta)}
\right\|^2 m(\pi h_i^2\eta) \notag \\
& = \left\| m\left(\nabla \pi  \left(\frac{1}{m(\pi \eta)} - 
\frac{h_i^2}{m(\pi h_i^2\eta)}\right)\right)
\right\|^2 m(\pi h_i^2\eta) \notag \\
& \leq \left\{ m\left(\left\|\nabla \pi  \right\|\left|\frac{1}{m(\pi \eta)} - 
\frac{h_i^2}{m(\pi h_i^2\eta)}\right|\right)
\right\}^2 m(\pi h_i^2\eta) \notag \\
& \leq 4Q_1^2\left\{ m\left(\frac{\pi }{\|\theta\|}\left|\frac{1}{m(\pi \eta)} - 
\frac{h_i^2}{m(\pi h_i^2\eta)}\right|\right)
\right\}^2 m(\pi h_i^2\eta) \notag \\
& \leq  \frac{16Q_1^2}{m(\pi \eta)}\left\{
m\left(\frac{\pi }{\|\theta\|}
\left| 1 - \frac{\sqrt{m(\pi \eta)}}{\sqrt{m(\pi h_i^2\eta)}}h_i\right|
\right)\right\}^2  \notag \\
& \leq  \frac{16Q_1^2}{m(\pi \eta)}
 m\left(\frac{\pi }{\eta\|\theta\|^2}k(\eta\|\theta\|^2)\right)
m\left(\frac{\pi \eta}{k(\eta\|\theta\|^2)} \left\{1 - \frac{\sqrt{m(\pi \eta)}}{\sqrt{m(\pi h_i^2\eta)}}h_i\right\}^2\right),
  \notag 
\end{align}
where the second, third and fourth inequality follow from \eqref{Q1}, \eqref{Q2} and Cauchy-Schwarz inequality and
\begin{align*}
 k(r)=r^{1/2}I_{[0,1]}(r)+I_{(1,\infty)}(r).
\end{align*}
By Lemma \ref{lem.ASMS}, there exists a constant $Q_2>1$ such that
\begin{align*}
 m\left(\frac{\pi \eta}{k(\eta\|\theta\|^2)} \left\{1 - \frac{\sqrt{m(\pi \eta)}}{\sqrt{m(\pi h_i^2\eta)}}h_i\right\}^2\right) 
\leq Q_2 m\left(\pi \eta \left\{1 - \frac{\sqrt{m(\pi \eta)}}{\sqrt{m(\pi h_i^2\eta)}}h_i\right\}^2\right).
\end{align*}
Further we have
\begin{align*}
 m\left(\pi \eta \left\{1 - \frac{\sqrt{m(\pi \eta)}}{\sqrt{m(\pi h_i^2\eta)}}h_i\right\}^2\right) 
 & =2m(\pi \eta)\left\{1- \frac{m(\pi \eta h_i)}{m(\pi \eta)}\frac{\sqrt{m(\pi \eta)}}{\sqrt{m(\pi h_i^2\eta)}}\right\} \\
& =2m(\pi \eta)\left\{1- \sqrt{\frac{m(\pi \eta h_i)^2}{m(\pi \eta)m(\pi h_i^2\eta)}}\right\} \\
& \leq 2m(\pi \eta)\left\{1- \frac{m(\pi \eta h_i)^2}{m(\pi \eta)m(\pi h_i^2\eta)}\right\},
\end{align*}
where the inequality follows from the fact
\begin{align*}
 \frac{m(\pi \eta h_i)^2}{m(\pi \eta)m(\pi h_i^2\eta)}\in(0,1)
\end{align*}
which is shown by Cauchy-Schwarz inequality.
Hence we have
\begin{equation}\label{ABAB}
 \begin{split}
& \left\| \frac{m(\nabla \pi )}{m(\pi \eta)} - 
\frac{m(h_i^2\nabla \pi )}{m(\pi h_i^2\eta)}
\right\|^2 m(\pi h_i^2\eta) \\
&\leq
32Q_1^2Q_2
m\left(\frac{\pi }{\eta\|\theta\|^2}k(\eta\|\theta\|^2)\right)
  \left\{1- \frac{m(\pi \eta h_i)^2}{m(\pi \eta)m(\pi h_i^2\eta)}\right\} \\
&=32Q_1^2Q_2 A(x,s)B_i(x,s)
\end{split}
\end{equation}
and, by \eqref{eq:Bdifference},
\begin{equation}\label{Delta.upper}
\Delta_i \leq  32Q_1^2Q_2 \int_{\mbR^p}\int_0^\infty 
 A(x,s)B_i(x,s)
 \rd x\rd s.
\end{equation}
Hence with $C=32Q_1^2Q_2$, the proof follows.

\medskip

In Sub-Sections \ref{sec:A} and \ref{sec:B}, we bound $A(x,s)$ and $B_i (x,s)$ from above, respectively.

\subsection{Proof of Lemma \ref{lem.bound.A}}
\label{sec:A}
By Part \ref{lem:pirab.2} of lemma \ref{lem:pirab}, there exists $Q_3>0$ such that
\begin{equation}
 \begin{split}\label{eq.A.0}
\frac{\pi(\theta,\eta) }{\eta\|\theta\|^2}k(\eta\|\theta\|^2) 
 &= \eta^{p/2-1}\pi(\eta\|\theta\|^2\mymid a,b)
 \left\{\frac{I_{[0,1]}(\eta\|\theta\|^2)}{\eta^{1/2}\|\theta\|}+
 \frac{I_{(1,\infty)}(\eta\|\theta\|^2)}{\eta\|\theta\|^2} \right\}\\
&\leq Q_3\eta^{p/2-1}\pi(\eta\|\theta\|^2\mymid a+1,\tilde{b})
\end{split}
\end{equation}
where
\begin{align*}
 \tilde{b}=\begin{cases}-1/2 & b\geq 0\\
	  b-1/2 & -1/2<b<0.
	 \end{cases}
\end{align*}
Let
\begin{equation}\label{eq.A.1}
 \pi_*(\theta,\eta)=\eta^{p/2-1}\pi(\eta\|\theta\|^2\mymid a+1,\tilde{b}).
\end{equation}
Note
\begin{equation}\label{qf}
 \frac{\lambda}{1-\lambda}\|\theta\|^2+\|x-\theta\|^2=\frac{1}{1-\lambda}\|\theta-(1-\lambda)x\|^2+\lambda\|x\|^2.
\end{equation}
Integrating w.r.t.~$\theta$ and $\eta$, we have
\begin{align*}
m(\pi_*) &=\frac{s^{n/2-1}}{c_{p,n} } \int_0^1\int_0^\infty \eta^{(p+n)/2-1}\exp\left(-\eta\frac{\lambda\|x\|^2+s}{2}\right)
 \frac{\lambda^{p/2+a+1}(1-\lambda)^{\tilde{b}}}{B(a+2,\tilde{b}+1)}\rd \lambda \rd \eta \\
 &=s^{-p/2-1}\frac{2^{(p+n)/2}\Gamma((p+n)/2)}{c_{p,n}  B(a+2,\tilde{b}+1)} \int_0^1 
  \frac{\lambda^{p/2+a+1}(1-\lambda)^{\tilde{b}}}{(1+\lambda \|x\|^2/s)^{(p+n)/2}}\rd \lambda,
\end{align*}
where 
\begin{align}\label{eq:C1}
c_{p,n} = (2\pi)^{p/2}\Gamma(n/2)2^{n/2}.
\end{align}
By the change of variables
\begin{equation}\label{cv.1}
 t=\frac{(1+w)\lambda}{1+\lambda w},\ \frac{\rd \lambda}{\rd t}=\frac{1}{1+w}\frac{1}{(1-\{w/(1+w)\}t)^2}, 
\end{equation}
where $w=\|x\|^2/s $, we have
\begin{align*}
 \lambda=(1-z)\frac{t}{1-zt}, \ 1-\lambda=\frac{1-t}{1-zt}, \ 1+\lambda w=\frac{1}{1-zt},
\end{align*}
where $z=w/(1+w)$, and hence
\begin{align*}
m(\pi_*) 
 &=\frac{(1-z)^{p/2+a+2}}{s^{p/2+1}}
 \frac{2^{(p+n)/2}\Gamma((p+n)/2)}{c_{p,n}  B(a+2,\tilde{b}+1)}
 \int_0^1 t^{p/2+a+1}(1-t)^{\tilde{b}}(1-zt)^{n/2-a-\tilde{b}-3}\rd t.
\end{align*}
Since $-a-3/2<-(a+1)<0$, we have
\begin{align*}
 (1-zt)^{-a-3/2} \leq (1-z)^{-a-3/2} \ \text{for all} \ t\in(0,1)
\end{align*}
and hence
\begin{equation*}
m(\pi_*) 
\leq \frac{(1-z)^{p/2+1/2}}{s^{p/2+1}}
 \frac{2^{(p+n)/2}\Gamma((p+n)/2)}{c_{p,n}  B(a+2,\tilde{b}+1)}
 \int_0^1 t^{p/2+a+1}(1-t)^{\tilde{b}}(1-zt)^{n/2-\tilde{b}-3/2}\rd t. 
\end{equation*}
Further, since $ 1-zt$ is monotone in $z$, we have
\begin{equation}\label{eq.A.2}
 m(\pi_*) \leq Q_4 s^{-p/2-1}(1-z)^{p/2+1/2},
\end{equation}
where
\begin{align*}
 Q_4=\frac{2^{(p+n)/2}\Gamma((p+n)/2)}{c_{p,n}  B(a+2,\tilde{b}+1)}
 \max\left\{B(p/2+a+2,\tilde{b}+1), B(p/2+a+2,n/2-1/2)\right\}.
\end{align*}
Therefore,
by \eqref{eq.A.0}, \eqref{eq.A.1} and \eqref{eq.A.2}, we have
\begin{equation}\label{eq.A.important}
A(x,s)= m\left(\frac{\pi(\theta,\eta) }{\eta\|\theta\|^2}k(\eta\|\theta\|^2)\right)
\leq \frac{D}{ s^{p/2+1}}\left(\frac{1}{1+\|x\|^2/s}\right)^{p/2+1/2}
\end{equation}
where $D=Q_3Q_4$, completing the proof.

\subsection{Proof of Lemma \ref{lem.bound.B}}
\label{sec:B}
The proof is based on Lemmas \ref{lem.2.8.1} -- \ref{lem.2.8.3}, whose proofs are given in Subsection \ref{sec.proof.lemma.2.8}.
First we re-express $B_i (x,s)$ as follows.
\begin{lemma}\label{lem.2.8.1}
\begin{equation}\label{BBB}
\begin{split}
B_i (x,s)= 1-\frac{\{E[H_i(V/s)\mymid z]\}^2}{E[H_i^2(V/s)\mymid z]},
\end{split} 
\end{equation}
 where the expected value is with respect to the probability density on $v\in(0,\infty)$,
\begin{align}
 f(v\mymid z)&=\frac{v^{(p+n)/2}}{\psi(z)}
\int_0^1 \frac{t^{p/2+a}(1-t)^b}{(1-zt)^{p/2+a+b+2}}\exp\left(-\frac{v}{2(1-zt)}\right) \rd t,\label{fff}
\end{align} 
with normalizing constant $\psi(z)$ given below in \eqref{psipsi} and
 \begin{equation}\label{HIHI}
H_i(\eta)= \frac{h_i(\eta)}{i}=\frac{1}{i+\log (\max(\eta,1/\eta))}.
 \end{equation}
\end{lemma}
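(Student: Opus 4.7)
\textbf{Proof plan for Lemma \ref{lem.2.8.1}.}
The key idea is that for any function $\phi$ depending only on $\eta$, the quantity $m(\pi\eta\phi(\eta))$ factors as an $(x,s)$-dependent constant (which will cancel when we form the ratio $B_i$) times an expectation $E[\phi(V/s)\mymid z]$ with respect to the density $f(v\mymid z)$. Applying this to $\phi=1,\ h_i,\ h_i^2$ and using $h_i=iH_i$ will yield the claimed expression.

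The plan is to compute $m(\pi\eta\phi(\eta))$ in three stages. First, I will carry out the $\theta$-integration. Using the scale-mixture form \eqref{our.prior.2}--\eqref{pi.r.a.b} together with the identity \eqref{qf} to complete the square, the Gaussian $\theta$-integral can be performed explicitly, leaving
\begin{equation*}
m(\pi\eta\phi(\eta))=\frac{s^{n/2-1}}{c_{p,n}B(a+1,b+1)}\int_0^\infty\int_0^1 \eta^{(p+n)/2}\phi(\eta)\,\lambda^{p/2+a}(1-\lambda)^b\exp\!\left(-\tfrac{\eta(\lambda\|x\|^2+s)}{2}\right)\rd\lambda\,\rd\eta,
\end{equation*}
with $c_{p,n}$ as in \eqref{eq:C1}. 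Second, I will apply the change of variable \eqref{cv.1} from $\lambda$ to $t$, using the relations $\lambda=(1-z)t/(1-zt)$, $1-\lambda=(1-t)/(1-zt)$, $\lambda\|x\|^2+s=s/(1-zt)$, and the Jacobian $\rd\lambda/\rd t=(1-z)/(1-zt)^2$. This converts the $\lambda$-integral into one against $t^{p/2+a}(1-t)^b/(1-zt)^{p/2+a+b+2}$ with exponential factor $\exp(-\eta s/(2(1-zt)))$, picking up an overall $(1-z)^{p/2+a+1}$.

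Third, I will substitute $v=\eta s$ so that $\phi(\eta)=\phi(v/s)$ and $\eta^{(p+n)/2}\rd\eta = s^{-(p+n)/2-1}v^{(p+n)/2}\rd v$. Recognizing the inner $t$-integral times $v^{(p+n)/2}$ as exactly $\psi(z)f(v\mymid z)$ (where $\psi(z)$ is defined so that $f(\cdot\mymid z)$ integrates to $1$ on $(0,\infty)$), I obtain
\begin{equation*}
m(\pi\eta\phi(\eta))=K(x,s)\,E\!\left[\phi(V/s)\mymid z\right],
\end{equation*}
where $K(x,s)$ collects all factors that depend only on $(x,s)$ (through $s$ and $z$) but not on $\phi$.

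Finally, taking $\phi=1,\ h_i,\ h_i^2$ and using $h_i(\eta)=iH_i(\eta)$ from \eqref{HIHI}, one gets $m(\pi\eta)=K$, $m(\pi\eta h_i)=iK\,E[H_i(V/s)\mymid z]$, and $m(\pi h_i^2\eta)=i^2K\,E[H_i^2(V/s)\mymid z]$. Substituting into the definition \eqref{ABI} of $B_i(x,s)$, both $K$ and $i^2$ cancel, giving the claimed formula \eqref{BBB}. The only non-routine step is the double change of variables $(\lambda,\eta)\to(t,v)$; once set up correctly, the algebra is mechanical and the cancellation of $K(x,s)$ is automatic because $\phi$ enters only through the $v$-integral.
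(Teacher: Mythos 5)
Your proposal is correct and follows essentially the same route as the paper: integrate out $\theta$ via the completion of squares \eqref{qf}, change variables $\lambda\to t$ via \eqref{cv.1} and $\eta\to v=\eta s$, and observe that $m(\pi\eta\phi(\eta))=K(x,s)\,E[\phi(V/s)\mymid z]$ with $K$ independent of $\phi$, so that $K$ and the $i$-factors cancel in the ratio defining $B_i$. All the computational details you state (Jacobian, exponent of $(1-z)$, power of $s$) match the paper's.
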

The behavior of the probability density $f$ given in \eqref{fff} is summarized in the following lemma.
\begin{lemma}\label{lem.2.8.2}
 Suppose $n/2-a>0$.
 \begin{enumerate}
  \item \label{lem.2.8.2.1}
       For $s\leq 1$ and for $k\geq 0$, there exist $C_1(k)>0$ and $C_2(k)>0$ such that
 \begin{align}\label{eq.lem:small.s.log.new}
s^{-C_1(k)}\int_0^s  |\log v|^k f(v\mymid z)\rd v\leq C_2(k).
 \end{align}
  \item \label{lem.2.8.2.2}
	For $s> 1$ and for $k\geq 0$, there exists $C_3(k)>0$ such that
 \begin{align*}
\exp\left(s/4\right)  \int_s^\infty  |\log v|^k f(v\mymid z)\rd v \leq  C_3(k).
 \end{align*}
 \end{enumerate} 
\end{lemma}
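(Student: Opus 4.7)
The plan is to change variables $u = v/(2(1-zt))$ in the inner $v$-integral of $f(v\mid z)$ and apply two elementary bounds for the incomplete gamma functions: for any $\beta \in (0,\alpha]$,
\begin{align*}
\gamma(\alpha,x) = \int_0^x w^{\alpha-1} e^{-w}\rd w &\leq \Gamma(\beta)\,x^{\alpha-\beta}, \\
\Gamma(\alpha,x) = \int_x^\infty w^{\alpha-1} e^{-w}\rd w &\leq 2^\alpha\,\Gamma(\alpha)\,e^{-x/2}.
\end{align*}
The first follows by writing $w^{\alpha-1} = w^{\alpha-\beta}\,w^{\beta-1}$ and using $w^{\alpha-\beta}\leq x^{\alpha-\beta}$ on $[0,x]$; the second by $e^{-w}\leq e^{-x/2}e^{-w/2}$ on $[x,\infty)$. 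Uniformity in $z$ will be controlled through the explicit form
\begin{equation*}
\psi(z) = \Gamma((p+n)/2+1)\,2^{(p+n)/2+1}\int_0^1 t^{p/2+a}(1-t)^b(1-zt)^{n/2-a-b-1}\rd t,
\end{equation*}
obtained by performing the $v$-integral first in the definition of $\psi$.

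For Part \ref{lem.2.8.2.1}, using $|\log v|^k\leq C_{k,\delta}\,v^{-\delta}$ on $(0,1]$ for a small $\delta>0$, the first gamma bound with $\alpha = (p+n)/2+1-\delta$ and any $\beta\in(0,\alpha]$ yields
\begin{equation*}
\int_0^s v^{(p+n)/2}|\log v|^k e^{-v/(2(1-zt))}\rd v \leq C_{k,\beta,\delta}\,(1-zt)^\beta\,s^{(p+n)/2+1-\delta-\beta}
\end{equation*}
uniformly in $t\in(0,1)$ and $z\in[0,1)$. Inserting this into $f$ and dividing by $\psi(z)$ reduces the bound on $\int_0^s|\log v|^k f(v\mid z)\rd v$ to
\begin{equation*}
C_{k,\beta,\delta}\,s^{(p+n)/2+1-\delta-\beta}\cdot
\frac{\int_0^1 t^{p/2+a}(1-t)^b(1-zt)^{\beta-p/2-a-b-2}\rd t}
{\int_0^1 t^{p/2+a}(1-t)^b(1-zt)^{n/2-a-b-1}\rd t}.
\end{equation*}
Because $n/2-a>0$, we may choose $\beta$ strictly between $p/2+a+1$ and $(p+n)/2+1$; then the numerator's $(1-zt)$-exponent exceeds $-b-1$, both integrals remain finite and nonzero as $z\to 1$, and the ratio is bounded uniformly on $[0,1)$. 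This yields $C_1(k) = (p+n)/2+1-\delta-\beta$, any value in $(0,n/2-a)$.

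For Part \ref{lem.2.8.2.2}, with $|\log v|^k = (\log v)^k\leq C_{k,\epsilon}\,v^\epsilon$ on $[1,\infty)$ for small $\epsilon>0$, the second gamma bound applied with $\alpha = (p+n)/2+\epsilon+1$ gives
\begin{equation*}
\int_s^\infty v^{(p+n)/2}|\log v|^k e^{-v/(2(1-zt))}\rd v \leq C_{k,\epsilon}\,(1-zt)^{(p+n)/2+\epsilon+1}\,e^{-s/(4(1-zt))}.
\end{equation*}
Using $e^{-s/(4(1-zt))}\leq e^{-s/4}$ (since $1-zt\leq 1$) and $(1-zt)^\epsilon\leq 1$, the resulting $t$-integrand is pointwise dominated by $e^{-s/4}$ times that of $\psi(z)$, so after dividing by $\psi(z)$ the ratio is at most $e^{-s/4}$ and the desired $C_3(k)$ follows. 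The main obstacle is Part \ref{lem.2.8.2.1}: a crude bound that drops the $(1-zt)^\beta$ factor from the first gamma bound would leave a $t$-integral $\int t^{p/2+a}(1-t)^b(1-zt)^{-p/2-a-b-2}\rd t$ that diverges as $z\to 1$ strictly faster than $\psi(z)$. Retaining this factor via the first gamma bound, with the choice $\beta>p/2+a+1$ permitted precisely because $n/2-a>0$, is essential.
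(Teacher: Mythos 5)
Your argument is correct, and for Part 2 it is essentially the computation the paper performs (the bound $\Gamma(\alpha,x)\le 2^\alpha\Gamma(\alpha)e^{-x/2}$ is exactly the paper's splitting $e^{-v/(2(1-zt))}\le e^{-v/(4(1-zt))}e^{-s/4}$, and the paper likewise controls the resulting ratio of $t$-integrals through the monotonicity of $\psi(z;j,k)$ in $z$). For Part 1, however, your mechanism is genuinely different from the paper's. The paper first derives a pointwise bound on the density itself, $f(v\mymid z)\le T_3(\epsilon)v^{n/2-a-1-\epsilon(b+1)}$ (its Lemma \ref{lem.f.small}): it eliminates the $(1-zt)$ dependence entirely by maximizing $y^{-c}e^{-v/(2y)}$ over $y$ (Lemma \ref{lem:polyexp}), and pays for this by peeling off $(1-t)^{(b+1)\epsilon-1}$ from $(1-t)^b$ so that the $t$-integral still converges after the factor $\{(1-t)/(1-zt)\}^{(b+1)(1-\epsilon)}$ is discarded; it then integrates the power of $v$ over $(0,s)$. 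You instead integrate in $v$ first and use the lower-incomplete-gamma bound $\gamma(\alpha,x)\le\Gamma(\beta)x^{\alpha-\beta}$, whose whole point is to \emph{retain} a factor $(1-zt)^\beta$ with $\beta>p/2+a+1$; uniformity in $z$ then comes from comparing the surviving $t$-integral directly with the one defining $\psi(z)$. Both routes hinge on $n/2-a>0$ in the same place (it is what allows the compensating exponent to be chosen), and your closing remark about why the crude bound without the $(1-zt)^\beta$ factor fails is exactly the difficulty the paper's $\epsilon$-split is designed to circumvent. Your version is arguably cleaner and gives a slightly better exponent: any $C_1(k)\in(0,n/2-a)$ independent of $k$, whereas the paper's $C_1(k)$ for $k>0$ loses an additional $(n/2-a)/4$ from its polynomial bound on $|\log v|^k$. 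One small point to make explicit when writing this up: after fixing $\beta\in(p/2+a+1,(p+n)/2+1)$ you must take $\delta$ small enough that $\beta\le\alpha=(p+n)/2+1-\delta$, which is implicit in your requirement $C_1(k)>0$ but worth stating.
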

It follows from Lemma \ref{lem.2.8.2} that
\begin{equation}\label{lem.2.8.futou.0}
 E\left[|\log V|^k\mymid z\right]<C_2(k)+C_3(k)\coloneqq C_4(k).
\end{equation}
Using Lemma \ref{lem.2.8.2}, $\{E[H_i(V/s)\mymid z]\}^2$ and $E[H_i^2(V/s)\mymid z]$ with
$H_i(\cdot)$ given in \eqref{HIHI} are bounded as follows.
\begin{lemma}\label{lem.2.8.3}
 \begin{enumerate}
  \item \label{lem.2.8.3.1}
	There exist $0<C_5<1$ and $C_6>0$ such that
\begin{equation}
\{i+\log(1/s)\}^2\{E[H_i(V/s)\mymid z]\}^2 
\geq 1-2\frac{E\left[\log V\mymid z\right]}{i+\log(1/s)}-\frac{C_6}{\{1+\log(1/s)\}^2}
\end{equation}
for all $0<s<C_5$, all $z\in(0,1)$ and all positive integers $i$.
  \item \label{lem.2.8.3.2}
	There exists $C_7>0$ such that
\begin{equation}
\{i+\log(1/s)\}^2E[H_i^2(V/s)\mymid z]
\leq 1-2\frac{E\left[\log V\mymid z\right]}{i+\log(1/s)}+\frac{C_7}{\{1+\log(1/s)\}^2}
\end{equation}
for all $0<s<1$, all $z\in(0,1)$ and all positive integers $i$.
  \item \label{lem.2.8.3.3}
	There exist $C_8>1$ and $C_9>0$ such that
\begin{equation}
(i+\log s)^2\{E[H_i(V/s)\mymid z]\}^2 
\geq 1+2\frac{E\left[\log V\mymid z\right]}{i+\log s}-\frac{C_9}{(1+\log s)^2}
\end{equation}
for all $s>C_8$, all $z\in(0,1)$ and all positive integers $i$.
  \item \label{lem.2.8.3.4}
	There exists $C_{10}>0$ such that
\begin{equation}
(i+\log s)^2 E[H_i^2(V/s)\mymid z]
\leq 1+2\frac{E\left[\log V\mymid z\right]}{i+\log s}+\frac{C_{10}}{(1+\log s)^2}
\end{equation}
for all $s>1$, all $z\in(0,1)$ and all positive integers $i$.
 \end{enumerate}
\end{lemma}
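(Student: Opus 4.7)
The four parts of Lemma \ref{lem.2.8.3} follow a common template: a careful pair of Taylor-type bounds combined with the tail control from Lemma \ref{lem.2.8.2}. Parts \ref{lem.2.8.3.1} and \ref{lem.2.8.3.3} are lower bounds on $\{E[H_i(V/s)\mymid z]\}^2$ while Parts \ref{lem.2.8.3.2} and \ref{lem.2.8.3.4} are upper bounds on $E[H_i^2(V/s)\mymid z]$; Parts 1--2 treat small $s$ (where the main mass of $V$ lies in $\{V>s\}$) and Parts 3--4 treat large $s$ (where the main mass lies in $\{V<s\}$). I will sketch the plan for Parts 1 and 2 in detail; Parts 3 and 4 follow by a symmetric argument with $i+\log(1/s)$ replaced by $i+\log s$ and the roles of $\{V>s\}$ and $\{V<s\}$ interchanged.

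Set $N = i+\log(1/s)$, so that on $\{V>s\}$ one has $H_i(V/s) = 1/(N+\log V)$ and hence $N H_i(V/s) = 1/(1+\log V/N)$. For Part \ref{lem.2.8.3.1} I use the elementary inequality $1/(1+x) \geq 1-x$ valid for all $x>-1$. On $\{V>s\}$ we have $\log V > \log s = -(N-i) > -N$, so this inequality applies pointwise; dropping the non-negative $\{V\leq s\}$ contribution gives
\begin{equation*}
N\, E[H_i(V/s)\mymid z] \;\geq\; P(V>s\mymid z) \;-\; \frac{E[\log V\cdot I_{\{V>s\}}\mymid z]}{N}.
\end{equation*}
Writing $P(V>s\mymid z) = 1 - \int_0^s f\,\rd v$ and $E[\log V\cdot I_{\{V>s\}}\mymid z] = E[\log V\mymid z] - E[\log V\cdot I_{\{V\leq s\}}\mymid z]$, and invoking Part \ref{lem.2.8.2.1} of Lemma \ref{lem.2.8.2} with $k=0,1$, the two truncation errors are bounded by multiples of $s^{C_1(0)}$ and $s^{C_1(1)}/N$. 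Both decay faster than any negative power of $\log(1/s)$, so for $s$ below some threshold $C_5<1$ the right-hand side exceeds $1/2$, and squaring yields Part \ref{lem.2.8.3.1} with $C_6$ absorbing all the $O(1/(1+\log(1/s))^2)$ cross terms.

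Part \ref{lem.2.8.3.2} will be the main obstacle. I split $\{V>s\}$ into a \emph{middle} region $M = \{e^{-N/2}\leq V\leq e^{N/2}\}\cap\{V>s\}$ where $|\log V/N|\leq 1/2$ and two complementary tails. On $M$ the identity $1/(1+x)^2 = 1-2x+x^2(3+2x)/(1+x)^2$ together with $|x|\leq 1/2$ yields a bound $1/(1+x)^2 \leq 1-2x+Kx^2$ with an absolute constant $K$, so
\begin{equation*}
N^2 E\bigl[H_i^2\, I_M\mymid z\bigr] \;\leq\; P(M\mymid z) \;-\; \frac{2E[\log V\cdot I_M\mymid z]}{N} \;+\; \frac{K\, E[(\log V)^2\mymid z]}{N^2}.
\end{equation*}
For the tail $\{s<V<e^{-N/2}\}$ (nonempty only when $s<e^{-i}$) I only have the crude pointwise bound $H_i \leq 1/i$, but Part \ref{lem.2.8.2.1} of Lemma \ref{lem.2.8.2} forces its probability to be at most a multiple of $e^{-NC_1(0)/2}$, so after multiplication by $N^2/i^2$ the contribution is exponentially small in $N$. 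The tail $\{V>e^{N/2}\}$ is handled analogously via Part \ref{lem.2.8.2.2} of Lemma \ref{lem.2.8.2}, and the $\{V\leq s\}$ contribution by the direct bound $H_i\leq 1/i$ combined with Part \ref{lem.2.8.2.1} of Lemma \ref{lem.2.8.2}. Using \eqref{lem.2.8.futou.0} to control $E[\log V\cdot I_{M^c}\mymid z]$ and collecting terms, every error is $O(1/(1+\log(1/s))^2)$, giving Part \ref{lem.2.8.3.2}. Parts \ref{lem.2.8.3.3} and \ref{lem.2.8.3.4} proceed identically after replacing $N$ by $N' = i+\log s$, $\{V>s\}$ by $\{V<s\}$, the inequality $1/(1+x) \geq 1-x$ by $1/(1-x) \geq 1+x$ (valid for $x<1$), and invoking Part \ref{lem.2.8.2.2} of Lemma \ref{lem.2.8.2} in place of Part \ref{lem.2.8.2.1} to control the complementary tail $\{V\geq s\}$. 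The uniformity of all constants in $i$ and $z$ is automatic because every estimate reduces to a pointwise bound times the probability of an event under $f(\cdot\mymid z)$.
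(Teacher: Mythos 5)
Your plan is correct and matches the paper's proof in all essentials: split the expectation at $V=s$, expand $N/(N+\log v)$ (resp.\ its square, with $N=i+\log(1/s)$ or $i+\log s$) to first order in $\log v/N$, control the remainder and the truncated region via the moment and tail bounds of Lemma \ref{lem.2.8.2}, check that the resulting lower bound is nonnegative below/above a threshold ($C_5$, $C_8$) before squaring, and absorb all residuals into a uniform $O(1/\{1+\log(1/s)\}^2)$ term. The only difference is in how the second-order remainder is tamed in Parts \ref{lem.2.8.3.2} and \ref{lem.2.8.3.4}: you restrict to $|\log V|\leq N/2$ and kill the complementary tails exponentially, whereas the paper uses a global polynomial inequality (Lemmas \ref{Hiineq.1} and \ref{Hiineq.2}) with a correction of order $\sum_{j=2}^{6}|\log v|^{j}/N^{2}$ controlled by the moments $C_4(j)$ — both devices work and rest on the same Lemma \ref{lem.2.8.2}.
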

Using Lemmas \ref{lem.2.8.1} -- \ref{lem.2.8.3}, we now complete the proof in the subintervals,
$(0,\gamma_1)$, $(\gamma_2,\infty)$ and $[\gamma_1,\gamma_2]$, respectively.

\smallskip

[\textbf{CASE \ I}] \ 
We first bound $B_i(x,s)$ for $0<s<\gamma_1$ where $\gamma_1$ is defined by
\begin{equation}
 \gamma_1=\min\{C_5, 1/\exp(4C_4(1))\}.
\end{equation}
Note, for $0<s<\gamma_1$, 
\begin{equation}\label{lem.2.8.futou}
 1- 2\frac{E\left[\log V\mymid z\right]}{i+\log(1/s)}
\geq 1-2\frac{E\left[|\log V|\mymid z\right]}{\log(1/s)}\geq 1-\frac{2C_4(1)}{4C_4(1)}
=\frac{1}{2},
\end{equation}
where the second inequality follows from \eqref{lem.2.8.futou.0}.
Further, by Parts \ref{lem.2.8.3.1} and \ref{lem.2.8.3.2} of Lemma \ref{lem.2.8.3},
for $0<s<\gamma_1$, 
we have
\begin{equation}
\begin{split}
 B_i (x,s)&= 1-\frac{\{E[H_i(V/s)\mymid z]\}^2}{E[H_i^2(V/s)\mymid z]} \\
 &\leq 1-\frac{1-2\frac{E\left[\log V\mymid z\right]}{i+\log(1/s)}-\frac{C_6}{\{1+\log(1/s)\}^2}}
 {1-2\frac{E\left[\log V\mymid z\right]}{i+\log(1/s)}+\frac{C_7}{\{1+\log(1/s)\}^2}} \\
 &= \frac{C_6+C_7}{\{1+\log(1/s)\}^2}
\frac{1}{1-2\frac{E\left[\log V\mymid z\right]}{i+\log(1/s)}+\frac{C_7}{\{1+\log(1/s)\}^2}} \\
 &\leq \frac{2C_6+2C_7}{\{1+\log(1/s)\}^2}
\end{split} 
\end{equation}
where the second inequality follows from \eqref{lem.2.8.futou}.
With $E=2C_6+2C_7$, the proof for $0<s<\gamma_1$ is complete.

\smallskip

[\textbf{CASE \ II}] \ 
Here we bound $B_i(x,s)$ for $s>\gamma_2>1$ where $\gamma_2$ is defined by
\begin{equation}
 \gamma_2=\max\{C_8,\exp(4C_4(1))\}.
\end{equation}
Note, for $s>\gamma_2$, 
\begin{equation}\label{lem.2.8.futou.2}
 1+ 2\frac{E\left[\log V\mymid z\right]}{i+\log s}
\geq 1-2\frac{E\left[|\log V|\mymid z\right]}{\log s}\geq 1-\frac{2C_4(1)}{4C_4(1)}
=\frac{1}{2},
\end{equation}
where the second inequality follows from \eqref{lem.2.8.futou.0}.
Further, by Parts \ref{lem.2.8.3.3} and \ref{lem.2.8.3.4} of Lemma \ref{lem.2.8.3},
for $s>\gamma_2$, 
we have
\begin{equation}
\begin{split}
 B_i (x,s)&= 1-\frac{\{E[H_i(V/s)\mymid z]\}^2}{E[H_i^2(V/s)\mymid z]} \\
 &\leq 1+\frac{1+2\frac{E\left[\log V\mymid z\right]}{i+\log s}-\frac{C_9}{(1+\log s)^2}}
 {1+2\frac{E\left[\log V\mymid z\right]}{i+\log s}+\frac{C_{10}}{(1+\log s)^2}} \\
 &= \frac{C_9+C_{10}}{(1+\log s)^2}
\frac{1}{1+2\frac{E\left[\log V\mymid z\right]}{i+\log s}+\frac{C_{10}}{(1+\log s)^2}} \\
 &\leq \frac{2C_9+2C_{10}}{(1+\log s)^2}
\end{split} 
\end{equation}
where the second inequality follows from \eqref{lem.2.8.futou.2}.
With $F=2C_9+2C_{10}$, the proof for $s>\gamma_2$ is complete.

Also, by \eqref{ABI}, $B_i\leq 1$ for all $x$ and $s$ and thus the bound for $\gamma_1\leq s \leq \gamma_2$ is $1$. This completes the proof.
  \subsection{Proof of Lemmas in Appendix \ref{sec:B}}
\label{sec.proof.lemma.2.8}
\subsubsection{Proof of Lemma \ref{lem.2.8.1}}
For 
\begin{align*}
 B_i (x,s)= 1- \frac{m(\pi \eta h_i)^2}{m(\pi \eta)m(\pi h_i^2\eta)},
\end{align*}
we re-express $m(\pi \eta L)$ with 
\begin{align*}
 L(\eta) =1, \ h_i(\eta), \text{ and } h_i^2(\eta).
\end{align*}
Noting \eqref{qf} and integrating w.r.t.~$\theta$, we have
\begin{align*}
m(\pi\eta L ) =\frac{s^{n/2-1}}{c_{p,n} } \int_0^1\int_0^\infty  \eta^{(p+n)/2}L (\eta)\exp\left(-\eta\frac{\lambda\|x\|^2+s}{2}\right)
\frac{\lambda^{p/2+a}(1-\lambda)^b}{B(a+1,b+1)}\rd \lambda \rd \eta,
\end{align*}
where $c_{p,n} $ is given by \eqref{eq:C1}.
By the change of variables given in \eqref{cv.1},
we have
\begin{align*}
 m(\pi \eta L )&=\frac{s^{n/2-1}(1-z)^{p/2+a+1}}{c_{p,n} B(a+1,b+1)}
\int_0^\infty\int_0^1 \frac{t^{p/2+a}(1-t)^b}{(1-zt)^{p/2+a+b+2}} \\
&\quad\times \eta^{(p+n)/2}L (\eta)\exp\left(-\frac{\eta}{1-zt}\frac{s}{2}\right)\rd \eta \rd t,
\end{align*}
where $z=w/(w+1)=\|x\|^2/(\|x\|^2+s)$.
Further, by the change of variables, $ v=\eta s$, we have
\begin{align*}
m(\pi \eta L )&=
\frac{s^{-p/2-2}(1-z)^{p/2+a+1}}{c_{p,n} B(a+1,b+1)}
\int_0^\infty\int_0^1  \frac{t^{p/2+a}(1-t)^b}{(1-zt)^{p/2+a+b+2}} \\
&\qquad\times v^{(p+n)/2}L (v/s)\exp\left(-\frac{v}{2(1-zt)}\right)\rd v \rd t \\
&=\frac{s^{-p/2-2}(1-z)^{p/2+a+1}}{c_{p,n} B(a+1,b+1)}\psi(z)
\int_0^\infty L (v/s)f(v\mymid z)\rd v \\
&=\frac{s^{-p/2-2}(1-z)^{p/2+a+1}}{c_{p,n} B(a+1,b+1)}\psi(z)E[L (V/s)\mymid z]
\end{align*}
where $\psi(z)$ is the normalizing constant given by
\begin{align}\label{psipsi} 
  \psi(z) = \int_0^\infty\int_0^1 \frac{t^{p/2+a}(1-t)^b}{(1-zt)^{p/2+a+b+2}}v^{(p+n)/2}\exp\left(-\frac{v}{2(1-zt)}\right)\rd v \rd t.
\end{align}
The the result follows.
  \subsubsection{Properties of $\psi(z) $ and $f(v\mymid z)$}
\label{sec.prop.psi.f}
We present preliminary results for Lemma \ref{lem.2.8.2}.
We consider a function more general than $\psi(z)$ given by \eqref{psipsi}. Let
\begin{equation}\label{psipsi.1}
\begin{split}
 \psi(z;j,k)  &= \int_0^\infty\int_0^1 \frac{t^{p/2+a}(1-t)^b}{(1-zt)^{p/2+a+b+2}}v^{(p+n)/2+(n/2-a)j} \\ &\qquad\times \exp\left(-\frac{v}{2k(1-zt)}\right)\rd v \rd t,
\end{split}  
\end{equation}
under the conditions
\begin{align}\label{Asss}
 n/2-a>0,\quad j>-1, \quad k>0.
\end{align}
Clearly $\psi(z)$ given by \eqref{psipsi} is
\begin{align*}
 \psi(z)=\psi(z;0,1).
\end{align*}
\begin{lemma}\label{lem:psi}
 Assume the assumption \eqref{Asss}. Then
\begin{equation}\label{psipsipsi.1}
 0<\psi(0;j,k)<\infty\text{ and }0<\psi(1;j,k)<\infty.
\end{equation}
Further
 \begin{equation}\label{c2c2}
T_1(j,k) \leq \psi(z;j,k)\leq T_2(j,k),
 \end{equation}
 where
\begin{align*}
 T_1(j,k)&=\min\{\psi(0;j,k),\psi(1;j,k)\}\\ \text{and } \ 
  T_2(j,k)&=\max\{\psi(0;j,k),\psi(1;j,k)\}.
\end{align*} 
\end{lemma}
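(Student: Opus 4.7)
The plan is to first reduce $\psi(z;j,k)$ to a one-dimensional integral by performing the $v$-integration in closed form. By Tonelli (all integrands are nonnegative) the order of integration is free, so I would substitute $u = v/\{2k(1-zt)\}$ inside the $v$-integral. This turns the inner integral into a standard gamma integral, producing a factor $(2k)^{(p+n)/2 + (n/2-a)j + 1}\,\Gamma\bigl((p+n)/2 + (n/2-a)j + 1\bigr)$ (finite because $j > -1$ and $n/2 - a > 0$ force the argument of $\Gamma$ to be positive), together with a power $(1-zt)^{\alpha}$ where
\begin{equation*}
\alpha = (n/2-a)(j+1) - b - 1.
\end{equation*}
Collecting everything gives
\begin{equation*}
\psi(z;j,k) = K(j,k) \int_0^1 t^{p/2+a}(1-t)^{b}(1-zt)^{\alpha}\rd t
\end{equation*}
for the positive constant $K(j,k)$ just described.

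Next I would verify the endpoint values in \eqref{psipsipsi.1}. Setting $z=0$ makes the $(1-zt)^{\alpha}$ factor identically $1$, so $\psi(0;j,k) = K(j,k) B(p/2+a+1, b+1)$, which is finite and positive under the standing hypotheses $a > -1 > -p/2 - 1$ and $b > -1$. Setting $z=1$ merges the exponents of $(1-t)$ to $b+\alpha = (n/2-a)(j+1) - 1$, giving $\psi(1;j,k) = K(j,k)B\bigl(p/2+a+1,\,(n/2-a)(j+1)\bigr)$; here $(n/2-a)(j+1)>0$ is exactly what is delivered by the assumptions in \eqref{Asss}, so this endpoint is also finite and positive.

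Finally, for the sandwich \eqref{c2c2} the key observation is that for each fixed $t \in (0,1)$ the map $z \mapsto (1-zt)^{\alpha}$ is monotone on $[0,1]$: decreasing when $\alpha \geq 0$ and increasing when $\alpha < 0$. Since the remaining factor $t^{p/2+a}(1-t)^b$ is a nonnegative weight independent of $z$, monotonicity persists after integration in $t$, so $\psi(z;j,k)$ is monotone in $z$ and therefore bracketed by its endpoint values. This yields $T_1(j,k) \leq \psi(z;j,k) \leq T_2(j,k)$ with the stated definitions. The only place where care is needed is to track the sign of $\alpha$ uniformly for all $(j,k)$ in the admissible range; writing the bounds in the symmetric $\min/\max$ form absorbs this case distinction and gives the lemma.
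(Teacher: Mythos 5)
Your proposal is correct and follows essentially the same route as the paper: integrate out $v$ to get the one-dimensional representation $\psi(z;j,k)=K(j,k)\int_0^1 t^{p/2+a}(1-t)^b(1-zt)^{(n/2-a)(j+1)-b-1}\,\rd t$, evaluate the two endpoints as Beta functions, and use monotonicity of $(1-zt)^{\alpha}$ in $z$ to sandwich $\psi$ between them. The only additions over the paper's argument are your explicit appeals to Tonelli and the positivity check on the Gamma argument, both of which are fine.
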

\begin{proof}
Note
\begin{equation}\label{c2}
 \begin{split}
\psi(z;j,k) 
  &=\Gamma((p+n)/2+1+(n/2-a)j)\{2k\}^{(p+n)/2+1+(n/2-a)j}\\
  &\quad\times\int_0^1 t^{p/2+a}(1-t)^b(1-zt)^{(n/2-a)(j+1)-b-1} \rd t, 
\end{split}
\end{equation}
which is monotone in $z$ (either increasing or decreasing depending on the sign of $(n/2-a)(j+1)-b-1$). Further we have
\begin{align*}
 \psi(0;j,k)&=\Gamma((p+n)/2+1+(n/2-a)j)\{2k\}^{(p+n)/2+1+(n/2-a)j}\\ &\qquad\times B(p/2+a+1,b+1), \\
 \psi(1;j,k)&=\Gamma((p+n)/2+1+(n/2-a)j)\{2k\}^{(p+n)/2+1+(n/2-a)j}\\ &\qquad\times B(p/2+a+1,(n/2-a)(j+1)),
\end{align*}
which are positive and finite under the assumption \eqref{Asss}.
Thus \eqref{psipsipsi.1} and \eqref{c2c2} follow.
\end{proof}

 \begin{lemma}\label{lem.f.small}
For any $\epsilon\in(0,1)$ and $v\in(0,1)$,
 \begin{align*}
  f(v\mymid z)\leq
  T_3(\epsilon)
  v^{n/2-a-1-\epsilon(b+1)}
 \end{align*}
 where
\begin{align*}
T_3(\epsilon)
 =\frac{\{p+2a+2+2\epsilon(b+1)\}^{p/2+a+1+\epsilon(b+1)}B(p/2+a+1,(b+1)\epsilon)}{T_1(0,1)}.
\end{align*}
  \end{lemma}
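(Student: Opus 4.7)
The plan is to start from the explicit formula \eqref{fff} for $f(v\mymid z)$, bound the normalizer $\psi(z)=\psi(z;0,1)$ from below via Lemma \ref{lem:psi} (which gives $\psi(z)\geq T_1(0,1)$), and then estimate the remaining integrand. The key technical step is to trade the exponential $\exp(-v/(2(1-zt)))$ for an algebraic expression in $v$ and $1-zt$ using the elementary inequality $y^\alpha e^{-y}\leq\alpha^\alpha$, i.e., $e^{-y}\leq\alpha^\alpha y^{-\alpha}$ for $y,\alpha>0$. I will choose the exponent to be
\[
\alpha = p/2+a+1+\epsilon(b+1),
\]
and apply the inequality with $y=v/(2(1-zt))$. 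This produces a factor $(2\alpha)^\alpha(1-zt)^\alpha v^{-\alpha}$, and the constant $(2\alpha)^\alpha$ is exactly $\{p+2a+2+2\epsilon(b+1)\}^{p/2+a+1+\epsilon(b+1)}$ appearing in $T_3(\epsilon)$.

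The reason this particular $\alpha$ is chosen is so that the remaining exponent on $1-zt$ in the integrand becomes
\[
-(p/2+a+b+2)+\alpha = -(1-\epsilon)(b+1),
\]
and so that the net power of $v$ works out to $(p+n)/2-\alpha = n/2-a-1-\epsilon(b+1)$, matching the statement. Since $b>-1$ and $\epsilon<1$, the quantity $(1-\epsilon)(b+1)$ is strictly positive, hence $(1-zt)^{-(1-\epsilon)(b+1)}$ is increasing in $zt$. Using $z\in(0,1)$ and $t\in(0,1)$ we can replace $1-zt$ by the smaller quantity $1-t$, i.e.\
\[
(1-zt)^{-(1-\epsilon)(b+1)} \leq (1-t)^{-(1-\epsilon)(b+1)}.
\]

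After these substitutions the $t$-integral collapses to
\[
\int_0^1 t^{p/2+a}(1-t)^{b-(1-\epsilon)(b+1)}\rd t
= \int_0^1 t^{p/2+a}(1-t)^{\epsilon(b+1)-1}\rd t
= B\bigl(p/2+a+1,\epsilon(b+1)\bigr),
\]
which is finite since $p/2+a+1>0$ (because $a>-1$) and $\epsilon(b+1)>0$. Dividing by $T_1(0,1)$ and collecting the factors gives exactly the claimed upper bound $T_3(\epsilon)\,v^{n/2-a-1-\epsilon(b+1)}$.

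There is no serious obstacle here: the argument is essentially bookkeeping, the only point requiring a moment of thought being the choice of $\alpha$, which is forced by two competing constraints: the exponent on $1-zt$ must be strictly greater than $-1$ to give a convergent $t$-integral after using $1-zt\geq 1-t$ (this dictates $\alpha>p/2+a+b+1$, i.e.\ $\epsilon>0$), and we need $\alpha<p/2+a+b+2$ to keep the Beta integral finite at the $t=1$ endpoint (equivalently $\epsilon<1$). The hypotheses $\epsilon\in(0,1)$ and $b>-1$ are exactly what make both endpoints work.
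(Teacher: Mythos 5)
Your proof is correct and essentially identical to the paper's: both rest on the elementary bound $y^{-c_1}e^{-c_2/y}\le (c_1/c_2)^{c_1}$ (Lemma \ref{lem:polyexp}) with the same exponent $p/2+a+1+\epsilon(b+1)$, the inequality $1-zt\ge 1-t$, and the lower bound $\psi(z)\ge T_1(0,1)$ from Lemma \ref{lem:psi}, differing only in the order in which the two estimates are applied. The one imprecision is in your closing motivational remark, which swaps the roles of the hypotheses: it is $\epsilon>0$ (together with $b>-1$) that makes $B(p/2+a+1,\epsilon(b+1))$ finite at the $t=1$ endpoint, while $\epsilon<1$ is what gives $(1-\epsilon)(b+1)>0$ and hence validates the monotonicity step $(1-zt)^{-(1-\epsilon)(b+1)}\le(1-t)^{-(1-\epsilon)(b+1)}$ — but this does not affect the correctness of the argument itself.
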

\begin{proof}
  Note, for $\epsilon\in(0,1)$,
  \begin{align*}
& (1-t)^b(1-zt)^{-p/2-a-b-2} \\
&= (1-t)^{(b+1)\epsilon-1}\left(\frac{1-t}{1-zt}\right)^{(b+1)(1-\epsilon)}
   (1-zt)^{-p/2-a-1-\epsilon(b+1)} \\
& \leq  (1-t)^{(b+1)\epsilon-1}(1-zt)^{-p/2-a-1-\epsilon(b+1)}.
  \end{align*}
 Also note for $v\in(0,1)$, since $a>-1$ and $b>-1$,
\begin{align*}
 v/2<1/2\leq p/2<p/2+a+1+\epsilon(b+1).
\end{align*}
Then, by Lemma \ref{lem:polyexp}, we have
  \begin{align*}
& (1-zt)^{-p/2-a-1-\epsilon(b+1)}
\exp\left(-\frac{v}{2(1-zt)}\right) \\
&\leq \left(\frac{p+2a+2+2\epsilon(b+1)}{v}\right)^{p/2+a+1+\epsilon(b+1)}.
  \end{align*}
Further, by Lemma \ref{lem:psi}, $\psi(z)\leq T_1(0,1)$ for all $z\in(0,1)$.
 Hence,
 by the definition of $f(v\mymid z)$ given by \eqref{fff}, 
 we have
\begin{align*}
 f(v\mymid z)\leq T_3(\epsilon)v^{n/2-a-1-\epsilon(b+1)}.
\end{align*}  
\end{proof}

   \subsubsection{Proof of Lemma \ref{lem.2.8.2}}
\mbox{}

   [Part \ref{lem.2.8.2.1}] \
 Let
 \begin{align*}
  \epsilon_*=\frac{1}{4}\min\left(\frac{n/2-a}{b+1},2\right)\in(0,1)
 \end{align*}
 in Lemma \ref{lem.f.small}.
 Then we have
\begin{align*}
 C_1(0)\coloneqq n/2-a-\epsilon_*(b+1)=\frac{n/2-a}{4}\left(4-\min\left\{1,2\frac{b+1}{n/2-a}\right\}\right)>0
\end{align*} 
 and hence
\begin{equation}\label{Q8Q8.1}
 \int_0^s  f(v\mymid z)\rd v\leq \frac{T_3(\epsilon_*)}{C_1(0)}s^{C_1(0)}=C_2(0)s^{C_1(0)},
\end{equation}
 where $C_2(0)$ is defined by $C_2(0)=T_3(\epsilon_*)/C_1(0)$.

 For $k>0$, by Part \ref{lem:log.poly.1} of Lemma \ref{lem:log.poly}, we have
\begin{align*}
| \log v|^k\leq \left(\frac{4k}{n/2-a}\right)^kv^{-(n/2-a)/4}.
\end{align*}
For $k>0$, we have
\begin{align*}
 C_1(k)&\coloneqq n/2-a-\epsilon_*(b+1)-\frac{1}{4}(n/2-a) \\ &=(n/2-a)\left(\frac{3}{4}-\frac{1}{4}\min\left\{1,2\frac{b+1}{n/2-a}\right\}\right)>0.
\end{align*} 
 Then, for $k>0$,
 \begin{equation}\label{Q8Q8.2}
 \int_0^s  | \log v|^k f(v\mymid z)\rd v\leq C_2(k) s^{C_1(k)},
\end{equation}
 where $C_2(k)$ is defined by
\begin{align*}
 C_2(k)=\frac{T_3(\epsilon_*)}{C_1(k)}\left(\frac{4k}{n/2-a}\right)^k.
\end{align*}
 By \eqref{Q8Q8.1} and \eqref{Q8Q8.2}, Part \ref{lem.2.8.2.1} follows.

   [Part \ref{lem.2.8.2.2}] \
 Note, for $v\geq s$,
\begin{align*}
 \exp\left(-\frac{v}{2(1-zt)}\right) 
&= \exp\left(-\frac{v}{4(1-zt)}\right)\exp\left(-\frac{v}{4(1-zt)}\right)\\
&\leq \exp\left(-\frac{v}{4(1-zt)}\right)\exp\left(-\frac{v}{4}\right) \\
&\leq \exp\left(-\frac{v}{4(1-zt)}\right)\exp\left(-\frac{s}{4}\right) .
\end{align*} 
 For $k=0$, by Lemma \ref{lem:psi}, we have
\begin{align*}
& \exp(s/4)\int_s^\infty  f(v\mymid z)\rd v \\
 &\leq \int_s^\infty 
\frac{v^{(p+n)/2}}{\psi(z;0,1)}
\int_0^1 \frac{t^{p/2+a}(1-t)^b}{(1-zt)^{p/2+a+b+2} }
 \exp\left(-\frac{v}{4(1-zt)}\right) \rd t\rd v \\
 &\leq \frac{\psi(z,0,2)}{\psi(z;0,1)} \leq \frac{T_2(0,2)}{T_1(0,1)}\coloneqq C_3(0),
\end{align*}
where the third inequality follows from Lemma \ref{lem:psi}.

 For $k>0$, note by Part \ref{lem:log.poly.2},
\begin{equation}\label{Q9Q9.1}
 |\log v|^k \leq
\left(\frac{2k}{n/2-a}\right)^k\left(v^{(n/2-a)/2}+ v^{-(n/2-a)/2}\right).
\end{equation}
Then  we have
\begin{align*}
& \exp(s/4)\int_s^\infty  |\log v|^k f(v\mymid z)\rd v \\
 &\leq 
 \int_s^\infty \left(\frac{2k}{n/2-a}\right)^k\left(v^{(n/2-a)/2}+ v^{-(n/2-a)/2}\right) \\
&\quad\times\frac{v^{(p+n)/2}}{\psi(z;0,1)}
\int_0^1 \frac{t^{p/2+a}(1-t)^b}{(1-zt)^{p/2+a+b+2} }
 \exp\left(-\frac{v}{4(1-zt)}\right) \rd t\rd v \\
 &\leq 
 \left(\frac{2k}{n/2-a}\right)^k\frac{\psi(z,-1/2,2)+\psi(z,1/2,2)}{\psi(z;0,1)} \\
 &\leq \left(\frac{2k}{n/2-a}\right)^k \frac{T_2(-1/2,2)+T_2(1/2,2)}{T_1(0,1)} \\
 &\coloneqq C_3(k)
\end{align*} 
where the third inequality follows from Lemma \ref{lem:psi}.
This completes the proof.
    
\subsubsection{Proof of Lemma \ref{lem.2.8.3}}
\mbox{}

[Part \ref{lem.2.8.3.1}] \ 
Assume $s<1$ equivalently $\log(1/s)>0$.
Then by Lemma \ref{Hiineq.1}, we have
\begin{align*}
&\{i+\log(1/s)\} E\left[H_i(V/s)\mymid z\right]  \\
 &=\int_0^s \frac{i+\log(1/s)}{i+\log (s/v)}f(v\mymid z)\rd v
 +\int_s^\infty \frac{i+\log(1/s)}{i+\log (v/s)}f(v\mymid z)\rd v \\
&\geq \int_s^\infty \frac{i+\log(1/s)}{i+\log (v/s)}f(v\mymid z)\rd v \\
 &\geq \int_s^\infty
 \left(1-\frac{\log v}{i+\log(1/s)}-\frac{|\log v|^3}{\{i+\log(1/s)\}^2}\right)f(v\mymid z)\rd v \\
 &\geq 
 1-\int_0^s f(v\mymid z)\rd v -
 \frac{E\left[\log V\mymid z\right]}{i+\log(1/s)} 
 -\frac{\int_0^s |\log v| f(v\mymid z)\rd v}{i+\log(1/s)}
 -\frac{E\left[|\log V|^3\mymid z\right]}{\{i+\log(1/s)\}^2}
 .
\end{align*}
By Lemma \ref{lem.2.8.2}, there exists $T_4>0$ such that 
\begin{align*}
\int_0^s f(v\mymid z)\rd v +\frac{\int_0^s |\log v| f(v\mymid z)\rd v}{i+\log(1/s)}+\frac{E\left[|\log V|^3\mymid z\right]}{\{i+\log(1/s)\}^2}\leq \frac{T_4}{\{1+\log(1/s)\}^2}
\end{align*}
for all $s\in(0,1)$ and hence
\begin{equation}\label{SMALL.S.1}
\{i+\log(1/s)\} E\left[H_i(V/s)\mymid z\right]\geq
g(s,z;i)
\end{equation}
where
\begin{equation}\label{korekore.0}
 g(s,z;i)= 1- \frac{E\left[\log V\mymid z\right]}{i+\log(1/s)}-\frac{T_4}{\{1+\log(1/s)\}^2}.
\end{equation}

Further
\begin{equation}\label{korekore}
 \frac{|E\left[\log V\mymid z\right]|}{i+\log(1/s)}< \frac{E\left[|\log V| \mymid z\right]}{1+\log(1/s)}\leq \frac{C_4(1)}{1+\log(1/s)}
\end{equation}
and hence 
\begin{align*}
  g(s,z;i)\geq 0,
\end{align*}
for all $s< C_5$ where
\begin{equation}
 C_5=1/\exp(C_4(1)+T_4).
\end{equation}
Consider $\{g(s,z;i)\}^2$ for all $s< C_5$. 
Then, by \eqref{korekore.0} and \eqref{korekore}, 
\begin{align*}
 \{g(s,z;i)\}^2
&\geq 1 -2\frac{E\left[\log V\mymid z\right]}{i+\log(1/s)}+\frac{2T_4 E\left[\log V\mymid z\right]}{\{1+\log(1/s)\}^2\{i+\log(1/s)\}}
 -\frac{2T_4}{\{1+\log(1/s)\}^2} \\
 &\geq 
 1-2\frac{E\left[\log V\mymid z\right]}{i+\log(1/s)}-\frac{C_6}{\{1+\log(1/s)\}^2},
\end{align*}
where
%
\begin{align*}
 C_6= 2T_4\{C_4(1)+1\}.
\end{align*}
This completes the proof for Part \ref{lem.2.8.3.1}.
%
\smallskip

[Part \ref{lem.2.8.3.2}] \
Assume $s<1$ equivalently $\log(1/s)>0$.
We consider $ E\left[H_i^2(V/s)\mymid z\right] $ given by
\begin{equation}\label{omake.0}
E\left[H_i^2(V/s)\mymid z\right] 
=\int_0^s \frac{f(v\mymid z)}{\{i+\log (s/v)\}^2}\rd v
 +\int_s^\infty \frac{f(v\mymid z)}{\{i+\log (v/s)\}^2}\rd v .
\end{equation}
Note
\begin{equation}\label{omake.1}
 \begin{split}
 \int_0^s \frac{f(v\mymid z)}{\{i+\log (s/v)\}^2}\rd v 
& \leq\frac{1}{i^2}\int_0^s f(v\mymid z)\rd v \\
&\leq\frac{\left\{1+\log(1/s)\right\}^2}{\{i+\log(1/s)\}^2}\int_0^s f(v\mymid z)\rd v \\
&=\frac{\left\{1+\log(1/s)\right\}^4\int_0^s f(v\mymid z)\rd v }{\{i+\log(1/s)\}^2\left\{1+\log(1/s)\right\}^2}.
\end{split}
\end{equation}
In the numerator above, by Lemma \ref{lem.2.8.2}, there exists $T_5>0$ such that
\begin{equation}\label{omake.2}
 \left\{1+\log(1/s)\right\}^4\int_0^s f(v\mymid z)\rd v\leq T_5
\end{equation}
for all $s\in(0,1)$.
Further, by Lemma \ref{Hiineq.1}, we have
\begin{equation}\label{omake.3}
\begin{split}
&\{i+\log (1/s)\}^2\int_s^\infty \frac{f(v\mymid z)}{\{i+\log (v/s)\}^2}\rd v \\
 &\leq \int_s^\infty
 \left(1-\frac{2\log v}{i+\log(1/s)}+4\frac{\sum_{j=2}^6|\log v|^j}{\{i+\log(1/s)\}^2}\right)f(v\mymid z)\rd v \\
& \leq\int_0^\infty
 \left(1-\frac{2\log v}{i+\log(1/s)}+4\frac{\sum_{j=2}^6|\log v|^j}{\{i+\log(1/s)\}^2}\right)f(v\mymid z)\rd v \\
 &=1-2\frac{E[\log V\mymid z]}{i+\log(1/s)} +4\frac{\sum_{j=2}^6 C_4(j)
 }{\{i+\log(1/s)\}^2}.
\end{split}
\end{equation}
By \eqref{omake.0} -- \eqref{omake.3} and Lemma \ref{lem.2.8.2},
we have
\begin{equation}\label{omake.4}
{\{i+\log(1/s)\}^2} E\left[H_i^2(V/s)\mymid z\right] \leq 
1-2\frac{E[\log V\mymid z]}{i+\log(1/s)} +\frac{C_7}{\{1+\log(1/s)\}^2},
\end{equation}
for all $s\in(0,1)$, where
\begin{align*}
 C_7=T_5+4\sum_{j=2}^6 C_4(j).
\end{align*}
This completes the proof for Part \ref{lem.2.8.3.2}.

\smallskip

%
 

[Part \ref{lem.2.8.3.3}] \ 
Assume $s>1$ equivalently $\log s>0$.
Then by Lemma \ref{Hiineq.2}, we have
\begin{align*}
&\{i+\log s\} E\left[H_i(V/s)\mymid z\right] \\
 &=\int_0^s \frac{i+\log s}{i+\log (s/v)}f(v\mymid z)\rd v
 +\int_s^\infty \frac{i+\log s}{i+\log (v/s)}f(v\mymid z)\rd v \\
&\geq \int_0^s \frac{i+\log s}{i+\log (s/v)}f(v\mymid z)\rd v \\
 &\geq \int_0^s
 \left(1+\frac{\log v}{i+\log s }-\frac{|\log v|^3}{(i+\log s )^2}\right)f(v\mymid z)\rd v \\
 &\geq 1-\int_s^\infty f(v\mymid z)\rd v +\frac{E\left[\log V\mymid z\right]}{i+\log s }
 -\frac{\int_s^\infty |\log v| f(v\mymid z)\rd v}{i+\log s }
 -\frac{E\left[|\log V|^3\mymid z\right]}{(i+\log s )^2}.
\end{align*}
By Lemma \ref{lem.2.8.2}, there exists $T_6>0$ such that 
\begin{align*}
\int_s^\infty f(v\mymid z)\rd v +\frac{\int_s^\infty |\log v| f(v\mymid z)\rd v}{i+\log s }+\frac{E\left[|\log V|^3\mymid z\right]}{(i+\log s )^2}\leq \frac{T_6}{(1+\log s)^2},
\end{align*}
for all $s\in(1,\infty)$ and hence
\begin{equation}\label{LARGE.S.1}
(i+\log s) E\left[H_i(V/s)\mymid z\right]\geq
g(s,z;i)
\end{equation}
where
\begin{equation}
 g(s,z;i)
  = 1+ \frac{E\left[\log V\mymid z\right]}{i+\log s }
			-\frac{T_6}{(1+\log s)^2}.
\end{equation}
Further, by \eqref{korekore}, we have
\begin{align*}
  g(s,z;i)\geq 0,
\end{align*}
for all $s> C_8$ where
\begin{equation}\label{tau3}
 C_8=\exp(C_4(1)+T_6).
\end{equation}

Consider $\{g(s,z;i)\}^2$ for all $s> C_8$. 
Then, by \eqref{korekore.0} and \eqref{korekore}, 
\begin{align*}
 \{g(s,z;i)\}^2
&\geq 1 +2\frac{E\left[\log V\mymid z\right]}{i+\log s} - \frac{2T_6 E\left[\log V\mymid z\right]}{(1+\log s)^2(i+\log s)}
 -\frac{2T_6}{(1+\log s)^2} \\
 &\geq 
 1+2\frac{E\left[\log V\mymid z\right]}{i+\log s }-\frac{C_9}{(1+\log s)^2},
\end{align*}
where
%
\begin{align*}
 C_9= 2T_6\{C_4(1)+1\}.
\end{align*}
This completes the proof for Part \ref{lem.2.8.3.3}.

%

[Part \ref{lem.2.8.3.4}] \ 
Assume $s>1$ equivalently $\log s >0$.
We consider $ E\left[H_i^2(V/s)\mymid z\right] $ given by
\begin{equation}\label{l.omake.0}
E\left[H_i^2(V/s)\mymid z\right] 
=\int_0^s \frac{f(v\mymid z)}{\{i+\log (s/v)\}^2}\rd v
 +\int_s^\infty \frac{f(v\mymid z)}{\{i+\log (v/s)\}^2}\rd v .
\end{equation}
Note
\begin{equation}\label{l.omake.1}
 \begin{split}
 \int_s^\infty \frac{f(v\mymid z)}{\{i+\log (v/s)\}^2}\rd v 
& \leq\frac{1}{i^2}\int_s^\infty f(v\mymid z)\rd v \\
&\leq\frac{(1+\log s)^2}{(i+\log s)^2}\int_s^\infty f(v\mymid z)\rd v \\
&=\frac{(1+\log s)^4\int_s^\infty f(v\mymid z)\rd v }{(1+\log s)^2 (i+\log s)^2}.
\end{split}
\end{equation}
In the numerator above, by Lemma \ref{lem.2.8.2}, there exists $T_7>0$ such that
\begin{equation}\label{l.omake.2}
 (1+\log s)^4\int_s^\infty f(v\mymid z)\rd v
  \leq T_7
\end{equation}
for all $s\in(1,\infty)$.
Further, by Lemma \ref{Hiineq.2}, we have
\begin{equation}\label{l.omake.3}
\begin{split}
& (i+\log s)^2\int_0^s
 \left(1+\frac{2\log v}{i+\log s }+4\frac{\sum_{j=2}^6|\log v|^j}{(i+\log s)^2}\right)f(v\mymid z)\rd v \\
& \leq\int_0^\infty
 \left(1+\frac{2\log v}{i+\log s }+4\frac{\sum_{j=2}^6|\log v|^j}{(i+\log s)^2}\right)f(v\mymid z)\rd v \\
&=1+2\frac{E[\log V\mymid z]}{i+\log s } +4\frac{\sum_{j=2}^6E[|\log V|^j\mymid z]}{(i+\log s)^2}.
\end{split}
\end{equation}

By \eqref{l.omake.0} -- \eqref{l.omake.3} and Lemma \ref{lem.2.8.2},
we have
\begin{equation}\label{l.omake.4}
(i+\log s)^2 E\left[H_i^2(V/s)\mymid z\right] \leq 
1+2\frac{E[\log V\mymid z]}{i+\log s } +\frac{C_{10}}{(1+\log s)^2}
\end{equation}
for all $s\in(1,\infty)$, where
\begin{align*}
 C_{10}=T_7+4\sum_{j=2}^6 C_4(j).
\end{align*}
This completes the proof for Part \ref{lem.2.8.3.4}.

 \section{Preliminary results for lemmas in Appendix A}

  \subsection{Properties of $\pi(r\mymid a,b)$}
\label{sec:pi}
\begin{lemma}\label{lem:pirab}
 \begin{enumerate}
  \item \label{lem:pirab.1}There exists $Q_1$ such that
\begin{align*}
 r\frac{|\pi'(r\mymid a,b)|}{\pi(r\mymid a,b)}<Q_1
\end{align*}
for all $r\geq 0$.
\item \label{lem:pirab.2}Assume $b>-1/2$. There exists $Q_3$ such that
\begin{align*}
 \pi(r\mymid a,b)\{r^{-1/2}I_{[0,1]}(r)+r^{-1}I_{(1,\infty)}(r)\}<Q_3
 \pi(r\mymid a+1,\tilde{b})
\end{align*}
where $\dps \tilde{b}=\begin{cases}-1/2 & b\geq 0\\
	  b-1/2 & -1/2<b<0.
	 \end{cases}$
 \end{enumerate}
\end{lemma}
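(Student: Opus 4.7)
My plan is to prove the two parts by different techniques: integration by parts for Part~1, and Laplace-type asymptotic analysis for Part~2.

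For Part~1, start from the representation \eqref{pi.r.a.b} and differentiate under the integral with respect to $r$, obtaining
\begin{equation*}
r\pi'(r\mid a,b) = -\frac{r}{2(2\pi)^{p/2}B(a+1,b+1)}\int_0^1 \lambda^{p/2+a+1}(1-\lambda)^{b-p/2-1}e^{-\lambda r/(2(1-\lambda))}\rd\lambda.
\end{equation*}
The key identity is $\frac{r}{2(1-\lambda)^2}\exp\!\left(-\lambda r/(2(1-\lambda))\right) = -\frac{d}{d\lambda}\exp\!\left(-\lambda r/(2(1-\lambda))\right)$, which converts the extra factor of $r$ into a $\lambda$-derivative of the exponential. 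One integration by parts then follows, with boundary terms vanishing at $\lambda=0$ because $p/2+a+1>0$ and at $\lambda=1$ because the exponential decays faster than any polynomial. The resulting identity is
\begin{equation*}
\frac{r\pi'(r\mid a,b)}{\pi(r\mid a,b)} = -(p/2+a+1) + (a+b+2)E[\Lambda\mid r],
\end{equation*}
where $\Lambda$ has density proportional to $\lambda^{p/2+a}(1-\lambda)^{b-p/2}\exp\!\left(-\lambda r/(2(1-\lambda))\right)$ on $(0,1)$. Since $E[\Lambda\mid r]\in(0,1)$ and $a+b+2>0$ under the assumption $a,b>-1$, one can take $Q_1=(p/2+a+1)+(a+b+2)$.

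For Part~2, apply the change of variables $u=\lambda/(1-\lambda)$ to rewrite
\begin{equation*}
\pi(r\mid a,b) = \frac{1}{(2\pi)^{p/2}B(a+1,b+1)}\int_0^\infty u^{p/2+a}(1+u)^{-(a+b+2)}e^{-ur/2}\rd u,
\end{equation*}
and analogously for $\pi(r\mid a+1,\tilde b)$. A Laplace-style analysis (via the substitution $v=ur$) yields the asymptotics: as $r\to\infty$, $\pi(r\mid a,b)\sim C_\infty r^{-(p/2+a+1)}$ and $\pi(r\mid a+1,\tilde b)\sim C'_\infty r^{-(p/2+a+2)}$ by dominated convergence, so $r^{-1}\pi(r\mid a,b)/\pi(r\mid a+1,\tilde b)$ is bounded at infinity; as $r\to 0^+$, $\pi(r\mid a,b)$ tends to a finite limit if $b>p/2-1$ and behaves like $r^{b-p/2+1}$ if $b<p/2-1$, while $\pi(r\mid a+1,\tilde b)\asymp r^{\tilde b-p/2+1}$ (since $\tilde b<p/2-1$ when $p\geq 3$). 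The piecewise formula for $\tilde b$ is designed precisely so that $\tilde b\leq b-1/2$ in all sub-cases: this forces the exponent inequality $r^{b-1/2-\tilde b}\leq\mathrm{const}$ required in the case $b<p/2-1$, while $\tilde b=-1/2\leq p/2-3/2$ (for $p\geq 3$) handles the case $b\geq p/2-1$. Continuity and strict positivity of both densities give a uniform ratio bound on any compact subset of $(0,\infty)$, and combining the three regimes produces a finite constant $Q_3$.

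The main obstacle is the careful case analysis in Part~2. The small-$r$ behaviour of $\pi(r\mid a,b)$ changes qualitatively at the threshold $b=p/2-1$: strictly below it one has a power-law divergence of order $r^{b-p/2+1}$; strictly above it the density is finite; at the threshold a $\log(1/r)$ correction appears. The piecewise definition of $\tilde b$ is calibrated precisely to absorb all three sub-cases with a single uniform constant (modulo $p\geq 3$, the regime in which Theorem~\ref{thm.main} is substantive), and keeping the asymptotic bookkeeping consistent across these sub-cases is the technical heart of the argument.
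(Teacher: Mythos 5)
Your proof is correct, and the two parts relate to the paper differently. Part~2 is essentially the paper's route: both derive the asymptotics $\pi(r\mymid a,b)\asymp r^{-(p/2+a+1)}$ as $r\to\infty$ and $\pi(r\mymid a,b)\asymp r^{b-p/2+1}$ (resp.\ $\log(1/r)$, resp.\ a constant) as $r\to 0$ --- the paper via a Tauberian theorem applied to the mixing density in the $\xi=(1-\lambda)/\lambda$ parametrization, you via Laplace/dominated convergence in the equivalent $u=\lambda/(1-\lambda)$ parametrization --- and then compare exponents and use positivity and continuity on compacts; your bookkeeping $\tilde b\le b-1/2$ is exactly what the paper's display of asymptotics encodes. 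Part~1, however, takes a genuinely different and arguably cleaner path: the paper computes only the limits of $r\pi'/\pi$ at $0$ and $\infty$ (obtaining $-(p/2+a+1)$, and $-(p/2-1-b)$ or $0$) and infers boundedness in between from continuity, whereas your integration by parts yields the exact identity $r\pi'(r\mymid a,b)/\pi(r\mymid a,b)=-(p/2+a+1)+(a+b+2)E[\Lambda\mymid r]$ with $E[\Lambda\mymid r]\in(0,1)$, which gives an explicit uniform bound for all $r>0$ in one stroke and reproduces the paper's limits as special cases (the tilted Beta mass drifts to $0$ as $r\to\infty$ and to $1$, or to the Beta mean, as $r\to0$ according as $b<p/2-1$ or $b>p/2-1$). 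Two small caveats, neither fatal: the vanishing of the boundary term at $\lambda=1$ requires $r>0$ (harmless, since the claimed bound at $r=0$ is then obtained as a limit, exactly as in the paper), and your observation that the threshold case $b=p/2-1$ forces $p\ge3$ in Part~2 is a genuine restriction --- for $p=2$, $b=0$ the stated inequality fails near $r=0$ by a factor $\log(1/r)$ --- which the paper's proof shares but does not state; it is immaterial since the admissibility problem is trivial for $p\le2$.
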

\begin{proof}
By the change of variables $\xi=\lambda/(1-\lambda)$, $\pi(r\mymid a,b)$ is given by
 \begin{equation*}
  \pi(r\mymid a,b)=
   \int_0^\infty\frac{1}{(2\pi\xi)^{p/2}}\exp\left(-\frac{r}{2\xi}\right)
  \frac{\xi^b(1+\xi)^{-a-b-2}}{B(a+1,b+1)}\rd \xi.
 \end{equation*}
Note that 
\begin{equation}\label{limxi.1}
\lim_{\xi\to 0}\frac{\xi^{-p/2} \xi^b(1+\xi)^{-a-b-2} }
{\xi^{-p/2+b}}=1,
\end{equation}
and
\begin{equation}\label{limxi.2}
 \lim_{\xi\to \infty}\frac{\xi^{-p/2} \xi^b(1+\xi)^{-a-b-2} }
{\xi^{-p/2-a-2}}=1.
\end{equation}
In the following, $f(r)\approx g(r)$ stands for $ \lim f(r)/g(r)=1$ as $r\to 0$ or $r\to\infty$.
By a standard Tauberian Theorem with \eqref{limxi.1}, we have
\begin{equation}\label{for.part2.lem:pirab.1}
\{(2\pi)^{p/2}B(a+1,b+1)\}\pi(r\mymid a,b)\approx \Gamma(p/2+a+1)(2/r)^{p/2+a+1} 
\end{equation}
as $r\to\infty$.
 Further, by the Tauberian Theorem with \eqref{limxi.2}, we have, as $r\to 0$, 
 \begin{equation}\label{for.part2.lem:pirab.2}
\{(2\pi)^{p/2}B(a+1,b+1)\} \pi(r\mymid a,b)\approx \Gamma(p/2-b-1)(2/r)^{p/2-b-1}
 \end{equation}
for $ -1<b<p/2-1$;
\begin{equation}\label{for.part2.lem:pirab.3}
 \{(2\pi)^{p/2}B(a+1,b+1)\} \pi(r\mymid a,b)\approx \log (1/r)
\end{equation}
for $b=p/2-1$; and
\begin{equation}\label{for.part2.lem:pirab.4}
\{(2\pi)^{p/2}B(a+1,b+1)\} \pi(0\mymid a,b) = \frac{B(p/2+a+1,b-p/2+1)}{B(a+1,b+1)}
\end{equation}
for $b>p/2-1$. The derivative of $\pi(r\mymid a,b)$ is
 \begin{equation*}
-2  \pi'(r\mymid a,b)=
   \int_0^\infty\frac{1}{(2\pi)^{p/2}\xi^{p/2+1}}\exp\left(-\frac{r}{2\xi}\right)
  \frac{\xi^b(1+\xi)^{-a-b-2}}{B(a+1,b+1)}\rd \xi.
 \end{equation*}
Similarly, we have
\begin{align*}
\{-2(2\pi)^{p/2}B(a+1,b+1)\}\pi'(r\mymid a,b)\approx \Gamma(p/2+a+2)(2/r)^{p/2+a+2}
\end{align*}
as $r\to\infty$. Further, as $r\to 0$, 
\begin{align*}
\{-2(2\pi)^{p/2}B(a+1,b+1)\} \pi'(r\mymid a,b)\approx \Gamma(p/2-b)(2/r)^{p/2-b}
\end{align*}
for $ -1<b<p/2$; 
\begin{align*}
 \{-2(2\pi)^{p/2}B(a+1,b+1)\} \pi'(r\mymid a,b) \approx \log(1/r)
\end{align*}
for $b=p/2$; and
\begin{align*}
\{-2(2\pi)^{p/2}B(a+1,b+1)\} \pi'(r\mymid a,b)|_{r=0} = \frac{B(p/2+a+2,b-p/2)}{B(a+1,b+1)}
\end{align*}
for $b>p/2$. Hence
\begin{equation}\label{eq.bound}
 \begin{split}
\lim_{r\to\infty} r\frac{\pi'(r\mymid a,b)}{\pi(r\mymid a,b)}&=-(p/2+a+1) \\
 \lim_{r\to 0} r\frac{\pi'(r\mymid a,b)}{\pi(r\mymid a,b)}&=
 \begin{cases}
  -(p/2-1-b) & -1<b<p/2-1 \\
  0 & b\geq p/2-1,
 \end{cases}
\end{split}
\end{equation}
which completes the proof of Part \ref{lem:pirab.1}.

Part \ref{lem:pirab.2} follows from \eqref{for.part2.lem:pirab.1}--\eqref{for.part2.lem:pirab.4}.
\end{proof}

\begin{lemma}\label{lem.ASMS}
 Let
 \begin{align*}
   k(r)=r^{1/2}I_{[0,1]}(r)+I_{(1,\infty)}(r).
 \end{align*}
Then we have
 \begin{align*}
& m\left(\frac{\pi(\theta,\eta) \eta}{k(\eta\|\theta\|^2)} \left\{1 - \frac{\sqrt{m(\pi \eta)}}{\sqrt{m(\pi h_i^2\eta)}}h_i\right\}^2\right) \\ &\leq Q_2
m\left(\pi(\theta,\eta) \eta \left\{1 - \frac{\sqrt{m(\pi \eta)}}{\sqrt{m(\pi h_i^2\eta)}}h_i\right\}^2\right)
 \end{align*}
 where
\begin{align*}
Q_2 = \frac{\int_0^1 \pi(r\mymid a,b)r^{p/2-3/2}\rd r}{\int_0^1 \pi(r\mymid a,b)r^{p/2-1}\rd r}.
\end{align*}
\end{lemma}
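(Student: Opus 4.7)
The plan is to reduce the claim to a one-dimensional Chebyshev-type estimate. First, the multiplier $\bigl\{1-\sqrt{m(\pi\eta)/m(\pi h_i^2\eta)}\,h_i(\eta)\bigr\}^2$ depends only on $\eta$ once $(x,s)$ are held fixed, so it factors out of the $\theta$-integral in the definition of $m(\cdot)$. Hence it suffices to prove the inner pointwise bound
\[
\int_{\mbR^p}\frac{\pi(\theta,\eta)\eta}{k(\eta\|\theta\|^2)}\,f_x(x\mymid\theta,\eta)\,\rd\theta \;\leq\; Q_2\int_{\mbR^p}\pi(\theta,\eta)\eta\,f_x(x\mymid\theta,\eta)\,\rd\theta
\]
for every fixed $\eta>0$ and $x\in\mbR^p$.

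Second, writing $\pi(\theta,\eta)\eta=\eta^{p/2}\pi(\eta\|\theta\|^2\mymid a,b)$ and performing the scale change $u=\sqrt{\eta}\,\theta$ with $y=\sqrt{\eta}\,x$, the common prefactor $\eta^{p/2}/(2\pi)^{p/2}$ cancels and the inequality becomes
\[
\int_{\mbR^p}\frac{\pi(\|u\|^2\mymid a,b)}{k(\|u\|^2)}\,e^{-\|y-u\|^2/2}\rd u \;\leq\; Q_2\int_{\mbR^p}\pi(\|u\|^2\mymid a,b)\,e^{-\|y-u\|^2/2}\rd u,
\]
which must hold for every $y\in\mbR^p$. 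Passing to polar coordinates $u=r\omega$ and integrating out the angular variable $\omega\in S^{p-1}$ against the Gaussian kernel produces a rotationally invariant factor $\Phi(r\|y\|):=\int_{S^{p-1}}e^{r\,y'\omega}\,\rd\sigma(\omega)$. Setting $p(r):=r^{p-1}\pi(r^2\mymid a,b)$ and $W(r):=e^{-r^2/2}\Phi(r\|y\|)\geq 0$, and using $1/k(r^2)=r^{-1}$ on $(0,1]$ and $=1$ on $(1,\infty)$, the problem reduces to showing
\[
\int_0^1 (r^{-1}-1)\,p(r)\,W(r)\,\rd r \;\leq\; (Q_2-1)\int_0^\infty p(r)\,W(r)\,\rd r,
\]
where after the change of variable $v=r^2$ inside the integrals defining $Q_2$ one finds $Q_2-1=\int_0^1(r^{-1}-1)p(r)\,\rd r/\int_0^1 p(r)\,\rd r$.

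The heart of the argument is a Chebyshev-type correlation estimate. Since $r\mapsto r^{-1}-1$ is non-negative and strictly decreasing on $(0,1]$, whenever $W$ is non-decreasing on $[0,1]$ the Chebyshev correlation inequality gives $\int_0^1(r^{-1}-1)pW\,\rd r/\int_0^1 pW\,\rd r\leq Q_2-1$, and enlarging the denominator on the right to all of $(0,\infty)$ only strengthens the bound. The main obstacle I anticipate is that $W$ need not be monotone on $[0,1]$ in general: indeed when $y=0$ one has $W(r)=\sigma(S^{p-1})e^{-r^2/2}$, which is strictly decreasing. To handle the non-monotone case I would use the Gaussian-mixture representation $e^{-r^2/2}\Phi(r\|y\|)=\int_{S^{p-1}}e^{(y'\omega)^2/2}e^{-(r-y'\omega)^2/2}\,\rd\sigma(\omega)$ and couple it with the $r^{-3-2a}$ tail decay of $\pi(r^2\mymid a,b)$ (integrable at infinity since $a>-1$), in order to show that any positive correlation contributed on $[0,1]$ by a decreasing component of $W$ is absorbed by the tail mass $\int_1^\infty p(r)W(r)\,\rd r$ appearing on the right. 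Carrying out this tail-absorption bookkeeping uniformly in $y$ is where I expect the main technical difficulty to lie.
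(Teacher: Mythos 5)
Your reduction is sound and matches the paper's up to the decisive step: you correctly factor the $\eta$-only multiplier $\{1-\sqrt{m(\pi\eta)/m(\pi h_i^2\eta)}\,h_i\}^2$ out of the $\theta$-integral, reduce to a pointwise inequality for $\int\pi\eta k(\eta\|\theta\|^2)^{-1}f_x\,\rd\theta$, pass to the radial variable, and correctly identify the obstacle — the Gaussian radial weight need not be monotone on $[0,1]$, so Chebyshev's correlation inequality does not apply directly. But your proposed resolution (a ``tail-absorption'' argument trading the positive correlation on $[0,1]$ against the mass of $\int_1^\infty pW$, using the tail decay of $\pi$) is exactly the part you leave unproved, and it is not a viable route as sketched. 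Already at $y=0$, where $W(r)\propto e^{-r^2/2}$ is decreasing, the correlation inequality forces $\int_0^1(r^{-1}-1)pW \geq (Q_2-1)\int_0^1 pW$, so the claim hinges entirely on whether the deficit is covered by $(Q_2-1)\int_1^\infty pW$; nothing in your sketch controls this, and for general $y$ both the deficit and the tail mass vary with $y$ in ways the tail behaviour of $\pi$ alone cannot arbitrate. You would not obtain the stated constant $Q_2$, nor even an unspecified finite constant, without substantial new input.

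The mechanism that actually closes the gap is different and requires no tail estimate: after the change of variables $\mu=\sqrt{\eta}\,\theta$, the quantity $\|\mu\|^2$ under the Gaussian kernel centered at $\sqrt{\eta}\,x$ is a noncentral chi-square, whose density is a Poisson mixture $\sum_{j\ge 0}a_j(\eta\|x\|^2)\,c_j\,t^{p/2+j-1}e^{-t/2}$ of central chi-square densities with nonnegative weights. Thus on $[0,1]$ your weight $W$ decomposes into a nonnegative combination of the \emph{increasing} functions $t^j$ against the fixed base measure $\pi(t\mymid a,b)\,t^{p/2-1}\rd t$, and the correlation inequality applies term by term: $\int_0^1\pi\,t^{p/2+j-3/2}\rd t\le Q_2\int_0^1\pi\,t^{p/2+j-1}\rd t$ for every $j\ge 0$, the case $j=0$ being the definition of $Q_2$. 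Summing over $j$ gives the bound on the region $\eta\|\theta\|^2\le 1$, and on the complementary region $1/k\equiv 1\le Q_2$ trivially. Your setup feeds directly into this argument, but the Poisson-mixture decomposition — which converts the non-monotone weight into a sum of monotone pieces — is the missing idea.
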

\begin{proof}
 Note
 \begin{align}
& m\left(\frac{\pi(\theta,\eta) \eta}{k(\eta\|\theta\|^2)} \left\{1 - \frac{\sqrt{m(\pi \eta)}}{\sqrt{m(\pi h_i^2\eta)}}h_i\right\}^2\right)\label{ASMS.1}\\
& = \int_0^\infty \int_{\mbR^p}
\frac{\pi(\theta,\eta) \eta}{k(\eta\|\theta\|^2)} 
  \left\{1 - \frac{\sqrt{m(\pi \eta)}}{\sqrt{m(\pi h_i^2\eta)}}h_i\right\}^2
f_{x }(x \mymid\theta,\eta) f_s(s\mymid\eta)\rd \theta\rd \eta \notag\\
& = \int_0^\infty \left(\int_{\mbR^p}
\frac{\pi(\theta,\eta) \eta}{k(\eta\|\theta\|^2)} f_{x }(x \mymid\theta,\eta) \rd \theta\right)
  \left\{1 - \frac{\sqrt{m(\pi \eta)}}{\sqrt{m(\pi h_i^2\eta)}}h_i\right\}^2
f_s(s\mymid\eta)\rd \eta .\notag
 \end{align}
The integrand in parentheses can be decomposed as
\begin{equation}\label{ASMS.2}
 \begin{split}
& \int_{\mbR^p}
 \frac{\pi(\theta,\eta) \eta}{k(\eta\|\theta\|^2)}
 f_{x }(x \mymid\theta,\eta)
   \rd \theta \\
 &=\int_{\eta\|\theta\|^2\leq 1}
 \frac{\pi(\theta,\eta) \eta}{\eta^{1/2}\|\theta\|}f_{x }(x \mymid\theta,\eta)\rd \theta 
+ \int_{\eta\|\theta\|^2>1}
 \pi(\theta,\eta) \eta f_{x }(x \mymid\theta,\eta)\rd \theta. 
\end{split}
\end{equation}
In the first term of \eqref{ASMS.2}, we have 
\begin{equation}\label{ASMS.3}
 \begin{split}
&\int_{\eta\|\theta\|^2\leq 1}
 \frac{\pi(\theta,\eta) \eta}{\eta^{1/2}\|\theta\|}f_{x }(x \mymid\theta,\eta)\rd \theta \\
&=\int_{\eta\|\theta\|^2\leq 1}
 \frac{\eta^{p/2}\pi(\eta\|\theta\|^2\mymid a,b)}{\eta^{1/2}\|\theta\|}f_{x }(x \mymid\theta,\eta)\rd \theta \\
&=\eta^{p/2}\int_{\|\mu\|^2\leq 1}
 \frac{\pi(\|\mu\|^2\mymid a,b)}{\|\mu\|}\frac{1}{(2\pi)^{p/2}}
 \exp\left(-\frac{\|\mu-\eta^{1/2}x\|^2}{2}\right)\rd \mu .
\end{split}
\end{equation}
Note $\| \mu\|^2$ may be regarded as a non-central chi-square random variable
 with $p$ degrees of freedom and $\eta\|  x \|^2$ non-centrality parameter.
 Let
 \begin{align*}
 a_j( \eta\|x\|^2 )=\frac{1}{\Gamma(p/2+j)2^{p/2+j}}\frac{(\eta\|  z \|^2/2)^j}{j!}\exp(-\eta\|  z \|^2/2).
 \end{align*}
Then we have
\begin{equation}\label{ASMS.4}
 \begin{split}
& \int_{\|\mu\|^2\leq 1}
 \frac{\pi(\|\mu\|^2\mymid a,b)}{\|\mu\|}\frac{1}{(2\pi)^{p/2}}
 \exp\left(-\frac{\|\mu-\eta^{1/2}x\|^2}{2}\right)\rd \mu  \\
&=\sum_{j=0}^\infty a_j(\eta\|x\|^2)\int_0^1 \pi(r\mymid a,b)r^{p/2+j-1-1/2}\rd r.
\end{split}
\end{equation}
By the correlation inequality, we have
\begin{equation}\label{ASMS.5}
 \begin{split}
 & \int_0^1 \pi(r\mymid a,b)r^{p/2+j-1-1/2}\rd r \\
 &\leq 
  \frac{\int_0^1 \pi(r\mymid a,b)r^{p/2-3/2}\rd r}{\int_0^1 \pi(r\mymid a,b)r^{p/2-1}\rd r}
  \int_0^1 \pi(r\mymid a,b)r^{p/2-1+j}\rd r \\
 &= Q_2
  \int_0^1 \pi(r\mymid a,b)r^{p/2-1+j}\rd r 
\end{split}
\end{equation}
 for $j\geq 0$, where $ Q_2$ is greater than $1$ by definition.
By \eqref{ASMS.5},
\begin{equation}\label{ASMS.6}
 \int_{\eta\|\theta\|^2\leq 1}
 \frac{\pi(\theta,\eta) \eta}{\eta^{1/2}\|\theta\|}f_{x }(x \mymid\theta,\eta)\rd \theta
\leq Q_2 \int_{\eta\|\theta\|^2\leq 1}
 \pi(\theta,\eta) \eta f_{x }(x \mymid\theta,\eta)\rd \theta.
\end{equation} 
 Also since $Q_2>1$ and by \eqref{ASMS.2}, we have
\begin{equation}\label{ASMS.7}
 \int_{\eta\|\theta\|^2>1}
 \pi(\theta,\eta) \eta f_{x }(x \mymid\theta,\eta)\rd \theta
\leq Q_2\int_{\eta\|\theta\|^2>1}
 \pi(\theta,\eta) \eta f_{x }(x \mymid\theta,\eta)\rd \theta.
\end{equation}
Hence the result follows from \eqref{ASMS.1}, \eqref{ASMS.2}, \eqref{ASMS.6} and \eqref{ASMS.7}.
\end{proof}


\begin{lemma}\label{lem:polyexp}
 Let $c_1>c_2>0$. Then
\begin{align*}
 \max_{y\in(0,\infty)}y^{-c_1}\exp(-c_2/y)< (c_1/c_2)^{c_1}.
\end{align*} 
\end{lemma}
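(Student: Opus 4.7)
The plan is to treat this as an elementary one-variable calculus optimization: differentiate, locate the unique critical point, and evaluate. Write $f(y) = y^{-c_1}\exp(-c_2/y)$ on $(0,\infty)$. I would first observe the boundary behavior: as $y\to 0^+$ the factor $\exp(-c_2/y)$ decays faster than any polynomial blows up, so $f(y)\to 0$, and as $y\to\infty$ the factor $y^{-c_1}\to 0$ while $\exp(-c_2/y)\to 1$, so again $f(y)\to 0$. Hence the supremum is attained at an interior critical point.

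Next I would take the logarithmic derivative
\begin{equation*}
\frac{d}{dy}\log f(y) = -\frac{c_1}{y} + \frac{c_2}{y^2} = \frac{c_2 - c_1 y}{y^2},
\end{equation*}
which vanishes at the unique point $y^\ast = c_2/c_1$ (well defined and positive since $c_1,c_2>0$). Since $(\log f)'$ is positive for $y<y^\ast$ and negative for $y>y^\ast$, this is the global maximum.

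Plugging in gives
\begin{equation*}
f(y^\ast) = \left(\frac{c_2}{c_1}\right)^{-c_1}\exp\!\left(-\frac{c_2}{c_2/c_1}\right) = \left(\frac{c_1}{c_2}\right)^{c_1} e^{-c_1}.
\end{equation*}
Since $c_1>0$, we have $e^{-c_1}<1$, and therefore
\begin{equation*}
\max_{y>0} f(y) = \left(\frac{c_1}{c_2}\right)^{c_1} e^{-c_1} < \left(\frac{c_1}{c_2}\right)^{c_1},
\end{equation*}
which is the desired strict inequality. The hypothesis $c_1>c_2$ is not actually needed beyond $c_1,c_2>0$, but it is harmless. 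There is no real obstacle here; the only point worth flagging is that the inequality is strict precisely because $e^{-c_1}<1$, so the argument would fail to give strictness if $c_1$ were allowed to be $0$.
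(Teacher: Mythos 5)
Your proof is correct and is precisely the standard single-variable optimization argument that the paper omits (its proof of this lemma is simply ``Straightforward''). Your observation that only $c_1,c_2>0$ is needed, with strictness coming from $e^{-c_1}<1$, is also accurate.
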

\begin{proof}
 Straightforward.
\end{proof}

\begin{lemma}\label{Hiineq.1}
For $s<1$ and $v\geq s$,
\begin{align*}
& \frac{i+\log(1/s)}{i+\log v+\log(1/s)} 
 \geq
  1-\frac{\log v}{i+\log(1/s)}- \frac{|\log v|^3}{\{i+\log(1/s)\}^2}, \\
& \frac{i+\log(1/s)}{i+\log v+\log(1/s)} \leq
  1-\frac{\log v}{i+\log(1/s)}+ \frac{|\log v|^2+|\log v|^3}{\{i+\log(1/s)\}^2}
\end{align*}
and
 \begin{align*}
\left(\frac{i+\log(1/s)}{i+\log v+\log(1/s)}\right)^2 
\leq
1-2\frac{\log v}{i+\log(1/s)}+ 4\frac{\sum_{i=2}^6|\log v|^i}{\{i+\log(1/s)\}^2}.
\end{align*}
\end{lemma}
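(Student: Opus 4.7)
The plan is to set $u=\log v$ and $L=i+\log(1/s)$, so that the quantity of interest in all three inequalities becomes $L/(L+u)$. The hypothesis $s<1$ together with $i\geq 1$ gives $L>1$, and $v\geq s$ gives $u\geq \log s$, so $L+u\geq i\geq 1>0$; thus every denominator is strictly positive. All three inequalities will be read off from the exact algebraic identity
\begin{align*}
\frac{L}{L+u} \;=\; 1 - \frac{u}{L} + \frac{u^{2}}{L(L+u)},
\end{align*}
obtained from $u/(L+u)=u/L-u^{2}/[L(L+u)]$.

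The first inequality is then immediate: the remainder $u^{2}/[L(L+u)]$ is nonnegative, while the conjectured lower bound subtracts the nonpositive quantity $-|u|^{3}/L^{2}$.

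For the second inequality, the identity reduces the claim to $u^{2}/[L(L+u)]\leq (u^{2}+|u|^{3})/L^{2}$, equivalently
\begin{align*}
L \;\leq\; (1+|u|)(L+u).
\end{align*}
When $u\geq 0$ this is obvious by expansion. When $u<0$ it rearranges to $|u|(L-1)\geq u^{2}$, i.e.\ $|u|\leq L-1$; and this holds because the range $s\leq v<1$ forces $|u|=-\log v\leq -\log s=\log(1/s)$, which is at most $L-1=(i-1)+\log(1/s)$ since $i\geq 1$.

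The third inequality is the main bookkeeping step; I expect no structural difficulty, only careful collection of terms. The plan is to square the identity and bound the four correction terms in
\begin{align*}
\Bigl(\tfrac{L}{L+u}\Bigr)^{2}-\Bigl(1-\tfrac{2u}{L}\Bigr) \;=\; \tfrac{u^{2}}{L^{2}} + \tfrac{2u^{2}}{L(L+u)} - \tfrac{2u^{3}}{L^{2}(L+u)} + \tfrac{u^{4}}{L^{2}(L+u)^{2}},
\end{align*}
using $L\geq 1$ together with the pointwise estimate $1/(L+u)\leq (1+|u|)/L$ just proved. This produces polynomials in $|u|$ divided by $L^{2}$ whose coefficients on $|u|^{2},|u|^{3},|u|^{4},|u|^{5},|u|^{6}$ turn out to be at most $3,4,3,2,1$ respectively (bounding $-2u^{3}$ by $2|u|^{3}$ in the cross term), each $\leq 4$, which assembles into the stated bound $4\sum_{j=2}^{6}|u|^{j}/L^{2}$.
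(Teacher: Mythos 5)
Your proof is correct and follows essentially the same route as the paper's: an exact finite expansion of $L/(L+u)$ in powers of $u/L$ (with $L=i+\log(1/s)$, $u=\log v$), with the remainder controlled by the key fact $L+u=i+\log(v/s)\geq i\geq 1$. The paper merely carries the expansion one step further, to an exact cubic remainder $-u^{3}/\{L^{2}(L+u)\}$ bounded by $|u|^{3}/L^{2}$, instead of invoking your auxiliary bound $1/(L+u)\leq(1+|u|)/L$; the two are algebraically equivalent and produce the same coefficients $3,4,3,2,1\leq 4$ in the squared inequality.
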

\begin{proof}
For $v\geq s$,
\begin{align*}
& \frac{i+\log(1/s)}{i+\log v+\log(1/s)} \\
 &= 1-\frac{\log v}{i+\log v+\log(1/s)} \\
& = 1-\frac{\log v}{i+\log(1/s)}+ \frac{\{\log v\}^2}{\{i+\log(1/s)\}\{i+\log v+\log(1/s)\}} \\
 & = 1-\frac{\log v}{i+\log(1/s)}+ \frac{\{\log v\}^2}{\{i+\log(1/s)\}^2}
- \frac{\{\log v\}^3}{\{i+\log(1/s)\}^2\{i+\log v+\log(1/s)\}}.
\end{align*}
Then three inequalities follow from the inequality
\begin{align*}
\left| \frac{\{\log v\}^3}{\{i+\log(1/s)\}^2\{i+\log v+\log(1/s)\}}\right|
 \leq \frac{|\log v|^3}{\{i+\log(1/s)\}^2}
\end{align*}
for $i\geq 1$. 
\end{proof}
In the similar way, we have a following result.

\begin{lemma}\label{Hiineq.2}
For $s>1$ and $v\leq s$,
\begin{align*}
& \frac{i+\log s}{i+\log s+\log(1/v)} \geq
  1+\frac{\log v}{i+\log s}- \frac{|\log v|^3}{(i+\log s)^2} \\
& \frac{i+\log s}{i+\log s+\log(1/v)}  \leq
  1+\frac{\log v}{i+\log s}+ \frac{|\log v|^2+|\log v|^3}{(i+\log s)^2},
\end{align*}
and
 \begin{align*}
 \left(\frac{i+\log s}{i+\log s+\log(1/v)}\right)^2 
\leq
  1+2\frac{\log v}{i+\log s}+ 4\frac{\sum_{i=2}^6|\log v|^i}{(i+\log s)^2}.
\end{align*}
\end{lemma}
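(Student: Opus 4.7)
The plan is to prove Lemma \ref{Hiineq.2} by the same telescoping algebraic identity used to establish Lemma \ref{Hiineq.1}. Since $\log(1/v) = -\log v$, the ratio of interest may be rewritten as
\begin{equation*}
\frac{i+\log s}{i+\log s+\log(1/v)} = \frac{i+\log s}{(i+\log s) - \log v} = 1 + \frac{\log v}{(i+\log s) - \log v}.
\end{equation*}
Applying the elementary identity $\frac{x}{y-x} = \frac{x}{y} + \frac{x^2}{y(y-x)}$ once, and then again to the resulting quadratic remainder, yields the exact expansion
\begin{equation*}
\frac{i+\log s}{(i+\log s)-\log v} = 1+\frac{\log v}{i+\log s}+\frac{(\log v)^2}{(i+\log s)^2} + R,
\qquad R = \frac{(\log v)^3}{(i+\log s)^2\bigl[(i+\log s)-\log v\bigr]}.
\end{equation*}

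For $s>1$ and $v\leq s$ one has $(i+\log s)-\log v = i+\log(s/v) \geq i \geq 1$, so $|R|\leq |\log v|^3/(i+\log s)^2$. Combining the sign of the $(\log v)^2$ term (nonnegative) with the two-sided bound on $R$ immediately gives the first two inequalities of the lemma, in exact parallel to the derivation in Lemma \ref{Hiineq.1}.

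For the squared inequality, I would square the upper-bound identity and expand $(1+A+B)^2 = 1+2A+A^2+2B+2AB+B^2$, where $A=(\log v)/(i+\log s)$ and $B\leq ((\log v)^2+|\log v|^3)/(i+\log s)^2$. Using $(i+\log s)^{-1}\leq 1$ and $|A|\leq |\log v|/(i+\log s)$ to absorb higher powers of $1/(i+\log s)$ into $1/(i+\log s)^2$, each of the five cross terms contributes a monomial of the form $|\log v|^j/(i+\log s)^2$ for some $j\in\{2,3,4,5,6\}$, with coefficients summing to at most $4$ in each power. This yields exactly the advertised bound with $4\sum_{j=2}^{6}|\log v|^j$ in the numerator.

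There is no genuine obstacle here, only careful bookkeeping. Indeed, Lemma \ref{Hiineq.2} is essentially the image of Lemma \ref{Hiineq.1} under the substitution $(s,v)\mapsto(1/s,1/v)$, which swaps the range condition $s<1,\ v\geq s$ with $s>1,\ v\leq s$ and sends $\log(1/s)\mapsto \log s$, $\log v\mapsto -\log v$; one could alternatively give a one-line proof by invoking this symmetry, but the direct expansion above is just as short.
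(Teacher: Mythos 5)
Your proof is correct and follows essentially the same route as the paper: the paper proves Lemma \ref{Hiineq.1} by exactly this telescoping identity (expanding $\tfrac{y}{y-x}$ to third order with remainder bounded via $i+\log(v/s)\geq i\geq 1$, resp.\ $i+\log(s/v)\geq i\geq 1$) and then disposes of Lemma \ref{Hiineq.2} with the remark ``in the similar way,'' which is precisely the symmetry $(s,v)\mapsto(1/s,1/v)$ you identify. No substantive difference.
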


\begin{lemma}\label{lem:log.poly}
\begin{enumerate}
 \item \label{lem:log.poly.1}
 For $x\in(0,1)$ and any positive $\epsilon$,
\begin{align*}
  |\log x| \leq \frac{1}{\epsilon x^\epsilon}.
\end{align*}
\item \label{lem:log.poly.2} For any positive $k$, 
\begin{align*}
 |\log x|^k 
 \leq
\frac{x^{k\epsilon}+ x^{-k\epsilon}}{\epsilon^k},
\end{align*}
 for all $x\in(0,\infty)$ and any positive $ \epsilon$.
\end{enumerate}
\end{lemma}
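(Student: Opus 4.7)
Both parts reduce to the single calculus fact that $\log y \le y^\epsilon/\epsilon$ for every $y>0$ and every $\epsilon>0$. The plan is to establish this inequality once and then deduce both parts by straightforward substitution and case-splitting.

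To establish the basic inequality, I would set $g(y)=y^\epsilon/\epsilon-\log y$ for $y\in(0,\infty)$ and differentiate: $g'(y)=y^{\epsilon-1}-1/y=(y^\epsilon-1)/y$, which vanishes only at $y=1$, is negative for $y<1$, and positive for $y>1$. Hence $g$ attains its global minimum at $y=1$, where $g(1)=1/\epsilon>0$. Therefore $\log y\le y^\epsilon/\epsilon$ on $(0,\infty)$, with equality never attained.

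For Part \ref{lem:log.poly.1}, given $x\in(0,1)$, write $|\log x|=\log(1/x)$ and apply the basic inequality with $y=1/x>1$, yielding $|\log x|\le (1/x)^\epsilon/\epsilon = 1/(\epsilon x^\epsilon)$, as required.

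For Part \ref{lem:log.poly.2}, I would split into two cases. If $x\ge 1$, then $|\log x|=\log x$, and the basic inequality gives $|\log x|^k\le x^{k\epsilon}/\epsilon^k$. If $x\in(0,1)$, then Part \ref{lem:log.poly.1} gives $|\log x|\le x^{-\epsilon}/\epsilon$, and raising to the $k$-th power yields $|\log x|^k\le x^{-k\epsilon}/\epsilon^k$. In either case, since both $x^{k\epsilon}$ and $x^{-k\epsilon}$ are positive, the chosen bound is dominated by $(x^{k\epsilon}+x^{-k\epsilon})/\epsilon^k$, completing the proof.

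There is no substantive obstacle here: the only ingredient is the elementary minimization of $y^\epsilon/\epsilon-\log y$, and the rest is bookkeeping. The minor point worth stating explicitly is that the same $\epsilon$ appears on both sides of Part \ref{lem:log.poly.2}, so one applies Part \ref{lem:log.poly.1} with that $\epsilon$ and then raises to the power $k$, which is why the exponent $k\epsilon$ and the denominator $\epsilon^k$ come out correctly.
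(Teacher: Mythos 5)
Your proof is correct and follows essentially the same route as the paper: the paper gets the key bound from $\log u\leq u-1$ applied to $u=x^{\pm\epsilon}$, while you obtain the equivalent inequality $\log y\leq y^{\epsilon}/\epsilon$ by direct minimization, then do the same case split and raise to the $k$-th power for Part 2.
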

\begin{proof}
For $x\in(0,1)$, we have
\begin{align*}
 \log\frac{1}{x^\epsilon}\leq \frac{1}{x^\epsilon}-1 
&\Leftrightarrow\  \epsilon\log \frac{1}{x} \leq \frac{1}{x^\epsilon}-1 \\
&\Rightarrow \ |\log x| \leq \frac{1}{\epsilon x^\epsilon}
\end{align*}
for any positive $ \epsilon$. Then Part \ref{lem:log.poly.1} follows.
 Similarly, for $x\in(1,\infty)$,
\begin{align*}
 \log x^\epsilon\leq x^\epsilon-1 
&\Leftrightarrow\  \epsilon\log x \leq x^\epsilon-1 \\
&\Rightarrow \ |\log x| \leq \frac{x^\epsilon}{\epsilon}.
\end{align*}
Then Part \ref{lem:log.poly.2} follows. 
\end{proof}

%
%

 \section*{Acknowledgements}
%
The first author was supported by partially supported by KAKENHI \#16K00040, 19K11852.
The second author was supported in part by a grant from the Simons Foundation (\#418098 to William Strawderman).
 




\end{document}